\newtheorem{lem}{Lemma}[section]
\newtheorem{thm}{Theorem}[section]
\newtheorem{cor}{Corollary}[section]
\newtheorem{rmk}{Remark}[section]
\numberwithin{equation}{section}
\def\<{\langle}
\def\>{\rangle}
\title [LDP for the mean-field limit of Hawkes processes]
{Large deviations for the mean-field limit of Hawkes processes}
\author{Fuqing GAO}
\address{School of Mathematics and Statistics, Wuhan University, Wuhan 430072, China}
\email{fqgao@whu.edu.cn}
\author{Lingjiong Zhu}
\address{Department of Mathematics, Florida State University, 1017 Academic Way, Tallahassee, FL-32306, United States of America}
\email{zhu@math.fsu.edu}
\date{May 19, 2026}
\begin{document}

\subjclass[2010]{60G55, 82C22, 60F10}

\keywords{Hawkes process, mean-field limit, large deviations.}

\begin{abstract}
Hawkes processes are a class of simple point processes
whose intensity depends on the past history, and is in general non-Markovian.
Limit theorems for Hawkes processes in various asymptotic regimes have been studied in the literature.
In this paper, we study a multidimensional nonlinear Hawkes process
in the asymptotic regime when the dimension goes to infinity,
whose mean-field limit is a time-inhomogeneous Poisson process, 
and our main result is a large deviation principle for the mean-field limit.
\end{abstract}

\maketitle

\section{Introduction and main results}

\subsection{Introduction}

Among the multivariate counting processes, mutually exciting processes,
also known as the Hawkes process \cite{Hawkes}, are one of the most popular
models to describe the interactions across the dimensions and also the dependence
on the past events. For a multivariate Hawkes process, a jump in one component will not only increase
the intensity of future jumps of its own component, known as the self-exciting property,
but also increase the intensity of the future jumps of or the other components
that are connected to its own component, which is known as the mutually-exciting property (see \cite{Hawkes}).

Let us now formally define the multivariate Hawkes process.
Let $N$ be a simple point process on $\mathbb{R}_{+}$, that is, 
a family $\{N(C)\}_{C\in\mathcal{B}(\mathbb{R}_{+})}$ of random variables
with values in $\mathbb{N}\cup\{\infty\}$ indexed
by the Borel $\sigma$-algebra $\mathcal{B}(\mathbb{R}_{+})$ of the positive real line $\mathbb{R}_{+}$,
where $N(C)=\sum_{n=1}^{\infty}1_{T_{n}\in C}$ and $(T_{n})_{n=1}^{\infty}$
is a sequence of extended positive real-valued random variables so that almost surely $0<T_{1}$, $T_{n}<T_{n+1}$
on $\{T_{n}<\infty\}$ for every $n=1,2,3\ldots$. 
Let $\mathcal{F}_{t}=\sigma(N(C),C\in\mathcal{B}(\mathbb{R}_{+}),C\subset(0,t])$, where $t>0$. 
The process $\lambda_{t}$ is called the $\mathcal{F}_{t}$-intensity of $N$ if for
all intervals $(a,b]$, where $0<a<b<\infty$, we have
\begin{equation}
\mathbb{E}[N((a,b])|\mathcal{F}_{a}]
=\mathbb{E}\left[\int_{a}^{b}\lambda_{s}ds\Big|\mathcal{F}_{a}\right],
\qquad
\text{a.s.}
\end{equation}
The $N$-dimensional Hawkes process is defined as $\left(Z_{t}^{1},\ldots,Z_{t}^{N}\right)$,
where $Z_{t}^{i}$, $1\leq i\leq N$, are simple point processes without common points,
with $Z_{t}^{i}$ admitting an $\mathcal{F}_{t}$-intensity:
\begin{equation}
\lambda_{t}^{i}:=\phi_{i}\left(\sum_{j=1}^{N}\int_{0}^{t-}h_{ij}(t-s)dZ_{s}^{j}\right),\label{dynamics}
\end{equation}
where $\phi_{i}(\cdot):\mathbb{R}^{+}\rightarrow\mathbb{R}^{+}$ is locally integrable, left continuous,
$h_{ij}(\cdot):\mathbb{R}^{+}\rightarrow\mathbb{R}^{+}$ and
we always assume that $\Vert h_{ij}\Vert_{L^{1}}=\int_{0}^{\infty}h_{ij}(t)dt<\infty$.
In \eqref{dynamics}, $\int_{0}^{t-}h_{ij}(t-s)dZ_{s}^{j}$ stands for $\sum_{0<\tau^{j}<t}h_{ij}(t-\tau^{j})$, where
$\tau^{j}$ are the occurrences of the points before time $t$ of the counting process $Z^{j}$. 
In the literature, $h_{ij}(\cdot)$ and $\phi_{i}(\cdot)$ are usually referred to
as exciting function (or sometimes kernel function) and rate function respectively.
A Hawkes process is linear if $\phi_{i}(\cdot)$ are all linear and 
it is nonlinear otherwise.

The Hawkes process when $\phi_{i}(\cdot)$ are linear was first proposed by Alan Hawkes
in 1971 to model earthquakes and their aftershocks \cite{Hawkes}. 
The nonlinear Hawkes process was first introduced by Br\'{e}maud and Massouli\'{e} \cite{Bremaud}.
The Hawkes process naturally generalizes the Poisson process and it
captures both the self-exciting and mutually-exciting property and the clustering effect, 
and it is a very versatile model
for statistical analysis. These explain why it has wide applications in neuroscience, genome analysis,
criminology, social networks, healthcare, seismology, insurance, finance, machine learning and many other fields.
For a list of references, we refer to \cite{ZhuThesis}.

The linear Hawkes process has the immigration-birth representation, see e.g. \cite{HawkesII}, that makes
it more analytically tractable than the nonlinear Hawkes process. 
Recently, renewal properties have also been investigated for Hawkes processes, see e.g. \cite{Costa}
for the nonlinear Hawkes process with bounded memory and \cite{Graham} for the linear Hawkes process
with unbounded memory.

Most of the asymptotic results for Hawkes processes in the literature
are for the univariate Hawkes process, that is the one-dimensional case, i.e.
$N=1$ in \eqref{dynamics}.
Among these asymptotic results, 
the large time limit theorems are the most studied. 
For the linear Hawkes process,
the functional law of large numbers
and functional central limit theorems were studied in Bacry et al. \cite{Bacry};
the large deviations principle was studied in Bordenave
and Torrisi \cite{Bordenave}.
The precise large and moderate deviations are recently studied in Gao and Zhu \cite{GaoFZhu}.
Limit theorems for marked Hawkes processes were studied
for the univariate case in Karabash and Zhu \cite{KarabashZhu2015}. 
For multivariate marked Hawkes processes (i.e. general $N\in\mathbb{N}$), 
functional law of large numbers and central limit theorem were obtained in
Horst and Xu \cite{HorstXu2021}. Karim et al. \cite{Karim2025} established a large deviation principle 
for a multivariate compound process induced by a multivariate marked Hawkes process.
Blanchet et al. \cite{Blanchet2025} studied sample path large deviations for multivariate Hawkes processes when the mutual excitation rates are heavy-tailed.
Recently, Horst and Xu \cite{HorstXu2025} established a nearly full functional law of large numbers and central limit theorem.

For the nonlinear Hawkes process, Zhu \cite{ZhuCLT} studied the functional central limit theorems
by using Poisson embeddings and a careful analysis of the decay of the correlations over time. 
In \cite{ZhuII}, Zhu obtained a process-level, i.e. level-3 large deviation principle
and the rate function is expressed as a variational problem optimizing over
a certain entropy function of any simple point process against the underlying nonlinear Hawkes process.
When the exciting function is exponential and the process is Markovian, 
an alternative expression for the rate function
for the large deviations was obtained in Zhu \cite{ZhuI}.
Using the techniques as a combination of Poisson embeddings, Stein's method and Malliavin calculus, 
the quantitative Gaussian and Poisson approximations were studied in Torrisi \cite{TorrisiI,TorrisiII}.
Functional inequalities are studied in \cite{Flint}.

There have been some progress made in the direction of asymptotic results
other than the large time limits for Hawkes process. For instance, 
in the case of linear Hawkes process, the scaling limit theorems for nearly unstable, also known as, nearly critical case,  are studied
in Jaisson and Rosenbaum \cite{Jaisson,JaissonII}.
Horst et al. \cite{Horst2023} show that the rescaled intensities converges weakly to the rough fractional diffusion
and analogous results for the multivariate case
are obtained in \cite{Euch2018,Rosenbaum2021}.
Scaling limit theorems have also been obtained for marked Hawkes processes for both the univariate case \cite{HorstXu2022} and the multivariate case \cite{Xu2024,Xu2024arXiv}.
Xu \cite{XuVolterraI,XuVolterraII} obtained scaling limit theorems for a class of heavy-tailed marked Hawkes processes 
and proved that their rescaled intensities converge weakly to the unique solution of a stochastic Volterra equation.
When the exciting function is exponential,
the intensity process and the pair of the counting process and the intensity process are Markovian. 
In Gao and Zhu \cite{GZ}, they studied the functional central limit theorems
for the linear Hawkes process
when the initial intensity is large, and they further studied the large deviations
and applied their results to insurance and queueing systems in \cite{GZ2}.
For the more general linear and non-Markovian case, Gao and Zhu \cite{GZ3} considered the large
baseline intensity asymptotic results in the stationary regime and studied the applications to queueing systems;
Li and Pang \cite{LiPang2022SPA} obtained the large intensity asymptotic results for marked Hawkes processes in the non-stationary regime.

The first work on the mean-field limit for high dimensional Hawkes processes \eqref{dynamics} appeared in Delattre et al. \cite{Delattre}. 
They showed that under a certain setting, the mean-field limit is an inhomogeneous Poisson process.
Since their seminal work, mean-field limit for Hawkes processes and the generalizations
have attracted a lot of attention in the recent literature. 
Chevallier \cite{Chevallier} studied a generalized Hawkes process model with an inclusion of the dependence on
the age of the process. Raad et al. \cite{Raad} also studied mean-field limit for an age dependent Hawkes process.
Chevallier \cite{ChevallierIII} used mean-field limit for an age-dependent Hawkes process
and a notation of stimulus sensitivity to study the response of the network to a stimulus. 
Delattre and Fournier \cite{DF} studied the mean-field limit
for Hawkes processes on a graph with two nodes whether or not influence each other modeled
by i.i.d. Bernoulli random variables. Dittlevsen and L\"{o}cherbach \cite{DL} considered
a multi-class systems of interacting nonlinear Hawkes processes modeling several
large families of neurons and studied the associated mean-field limit.
Based on the model in \cite{DL}, Chevallier et al. \cite{Chevallier2021} obtained a strong error bound and a diffusion approximation, moment bounds for the resulting diffusion, and numerical schemes.
Chevallier et al. \cite{CDLO} studied the spatially extended systems of interacting nonlinear Hawkes processes modeling large systems of neurons and study the associated mean field limit, 
which can be described by a neural field equation.
Recently, Lu\c{c}on and Poquet \cite{Lucon2025} studied the long-term stability of interacting Hawkes processes
with spatial extension following the work of \cite{CDLO} concerning the approximation of neural field equations by Hawkes processes.
Erny et al. \cite{Erny} studied the mean-field limit for interacting Hawkes
processes in a diffusive regime. 
L\"{o}cherbach \cite{Locherbach} studied Freidlin-Wentzell type large deviations
for cascades of diffusions that arise from an oscillating system of interacting Hawkes processes.
Heesen and Stannat \cite{Heesen} studied asymptotic fluctuations
of mean-field interacting non-linear Hawkes processes 
by obtaining a function central limit theorem that characterizes the asymptotic fluctuations
in terms of a stochastic Volterra integral equation.
Pfaffelhuber et al. \cite{Pfaffelhuber} studied mean-field limit for Hawkes processes
with excitation and inhibition. 
Stiefel \cite{Stiefel2023} studied mean-field limit for Hawkes processes with inhibition on Erd\H{o}s-R\'{e}nyi graphs.
Duval et al. \cite{Duval} studied a general class of mean-field interacting nonlinear Hawkes processes
with multiplicative inhibition.
Agathe-Nerine \cite{Agathe} studied the mean-field limit for Hawkes processes on inhomogeneous random graphs.
An age and leaky memory dependent Hawkes process was proposed in 
\cite{Schmutz} and the mean-field limit was obtained.
Szymanski and Xu \cite{Szymanski-Xu-2025} obtained mean-field limits for Hawkes processes
in the nearly unstable regime by extending the methodologies developed in \cite{Jaisson}, 
which generalized the results of \cite{Delattre}.
In terms of the statistical theory of mean-field Hawkes processes, 
Bacry et al. \cite{Bacry2016} studied the problem of
estimating the underlying parameters in systems of mean field interacting Hawkes processes,
and Delattre and Fournier \cite{DF} estimated parameters of Hawkes processes an an Erd\H{o}s-R\'{e}nyi graph. 
Very recently, Duarte et al. \cite{Duarte2025} studied the nonparametric estimation
based on a Nadaraya-Watson type kernel estimator.
Borovykh et al. \cite{Borovykh} applied the mean-field limit for Hawkes processes
to study the systemic risk in a mean-field model of interbank lending with self-exciting shocks.
L\"{o}cherbach \cite{Locherbach2017} presented a short survey of some aspects of the study of Hawkes processes in high dimensions including the mean-field limit, 
in view of modeling large systems of interacting neurons.

Despite a significant number of studies
about the mean-field limit for Hawkes processes and the variants in the recent literature, 
surprisingly, there has not been any study about the large deviations of the mean-field limit,
which concerns the small probability of rare events that the mean-field of the Hawkes process
deviates away from its typical limit.
To the best of our knowledge,  
our paper is the first one to study
the large deviations for the mean-field limit of a multidimensional nonlinear Hawkes process \eqref{dynamics}, 
which fills in this gap in the literature.
We follow the same model setup as the first mean-field limit paper in this paper, i.e. \cite{Delattre},
although our methods and techniques might be extendable to study the large deviations
for the mean-field limit for the various generalizations of the Hawkes model, 
which will be left as future research directions.

The paper is organized as follows. In Section~\ref{sec:pre}, we provide
the preliminaries that will be used to state our main results and the proofs.
In Section~\ref{sec:main:results}, we state the main result of the paper, which
is a large deviation principle for the mean-field limit. 
The proofs are then presented in the subsequent sections. 
In particular, we show existence, uniqueness and a law of large numbers result for perturbed mean fields
of Hawkes processes in Section~\ref{sec:LLN}. 
We analyze the rate function in Section~\ref{sec:rate:function}, and provide
a precise representation and an approximation theorem.
Finally, we are ready to provide the proof of the main result
in Section~\ref{LDPMeanFieldsSection}, in particular, the upper bound for the large deviations in Section~\ref{sec:upper:bound}
and the lower bound in Section~\ref{sec:lower:bound}. Further discussions are provided in Section~\ref{sec:discussions}. 

\subsection{Preliminaries}\label{sec:pre}

In order to state our main results, let us first provide some preliminaries.
First, let us introduce the assumptions that will be used throughout
the rest of the paper.
 
For each $N\geq 1$, we let $h_{ij}=\frac{1}{N}h$
and $\phi_{i}=\phi$ in \eqref{dynamics} throughout the rest of the paper,
which is the setting for the mean-field limit of Hawkes processes (see e.g. \cite{Delattre}).

Let $\phi:\mathbb R \mapsto [0,\infty)$ be such that
$\|\phi\|_{lip}=\sup_{x\ne y}|x-y|^{-1}|\phi(x)-\phi(y)|<\infty$
and let $h:[0,\infty)\mapsto \mathbb R$ be a locally square integrable function.
Moreover, we assume the following.

\begin{itemize}
\item[(A.1).] 
$h(\cdot):[0,\infty)\rightarrow[0,\infty)$ is locally integrable
and locally bounded; $h$ is differentiable and $|h'|$ is locally integrable and locally bounded.
 
\item[(A.2).]
$\phi(\cdot):[0,\infty)\rightarrow(0,\infty)$ 
is $\alpha$-Lipschitz for some $0<\alpha<\infty$ and $\alpha\Vert h\Vert_{L^{1}[0,T]}<1$.

\item[(A.3).]
$\inf_{x\geq 0}\phi(x)>0$.
\end{itemize}

Next, let us introduce the technical backgrounds and notations.

\subsubsection*{Wasserstein metric}

Let $M_1(\mathbb N)$ be the space of probability measures on $\mathbb N=\{0,1,2,\cdots\}$ with
$\int_{\mathbb N} x\mu(dx)<\infty$.
Let $W_1$ be the $L^1$-Wasserstein metric on $M_1(\mathbb N)$   defined by
\begin{equation*}
W_1(\nu,\mu)=\inf_{\pi} \int_{\mathbb N\times \mathbb N} |x-y|d\pi(x,y),\quad \nu,\mu\in M_1(\mathbb N),
\end{equation*}
where the infimum above is taken over all probability measures $\pi$ on
the product space $\mathbb N\times \mathbb N$ with  marginal distributions being $\nu$
and $\mu$ respectively.
According to the Monge-Kantorovitch-Rubinstein dual representation formula, 
\begin{equation*}
W_1(\nu,\mu)=\sup \left\{ \int_{\mathbb N} \varphi (x) (\nu-\mu)(dx);~\|\varphi\|_{lip}\leq 1\right\}.
\end{equation*}

It is known that for any $\mu_n,\mu\in (M_1(\mathbb N),W_1)$, convergence in $W_{1}$ is equivalent to weak convergence
plus the convergence of the first moment, i.e. $\mu_n\stackrel{W_1}{\to} \mu$ if and only if
\begin{equation*}
\mu_n\stackrel{w}{\to} \mu \mbox{ and } \int_0^\infty x\mu_n(dx)\to \int_0^\infty x\mu(dx).
\end{equation*}
For the background and properties of Wasserstein metric, we refer to \cite{Villani2003}.

\subsubsection*{The path space} 
Let  $D\left([0,T],M_{1}(\mathbb N)\right)$ denote the space of $M_1(\mathbb N)$-valued c\`{a}dl\`{a}g functions on $[0,T]$ equipped with the topology of uniform convergence. 

For any $\mu\in D\left([0,T],M_1(\mathbb N)\right)$,  
set $\bar{\mu}_t:=\int_{\mathbb N} x \mu_t(dx)$ 
and let $F_\mu$ and $f_\mu$ denote its distribution function and density function respectively, i.e.,
\begin{equation*}
F_\mu(t,x):=\mu_t([0,x]),\mbox{ \quad } f_\mu(t,x):=\mu_t(\{x\}).
\end{equation*}
Set $\bar{F}_\mu(t,x):=1-\mu_t([0,x])$.     
Let us define
\begin{equation*}
M[0,T]:=\left\{\mu\in D\left([0,T],M_1(\mathbb N)\right);~\mu_{0}(\{0\})=1, F_\mu(t,x) \mbox{ is decreasing in } t \right\}.
\end{equation*}
Then for any $\mu\in M[0,T]$,  $t\mapsto \bar{\mu}_t$ is increasing. 

Let $\langle \phi, \zeta\rangle$ denote the integral of the function $\phi$ with respect to a finite signed measure $\zeta$ on $\mathbb R $.
For any $\psi(t,x)$ bounded in $(t,x)$ and Lipschitz in $x$, 
$\mu\in D\left([0,T],M_1(\mathbb N)\right)$, $ t \in[0,T]$, we abbreviate
\begin{equation*}
\langle \mu_t, \psi(t)\rangle :=\int_{\mathbb N}\psi(t,x)\mu_t(dx).
\end{equation*}

\subsubsection*{The mean-field limit}

By using the Poisson embeddings, see e.g \cite{Bremaud,Delattre},
we can express the Hawkes process $\left(Z_{t}^{1},\ldots,Z_{t}^{N}\right)$
as the solution of a Poisson driven SDE:
\begin{equation}
Z^{i}_t=\int_0^t \int_0^\infty I_{\left\{z \leq \phi_{i}\left(\sum_{j=1}^N\int_0^{s-}h_{ij}(s-u)dZ_u^{j}\right)\right\}}
\pi^i(ds\,dz),
\qquad
1\leq i\leq N,
\end{equation}
where $\{\pi^i(ds\,dz), i\geq 1\}$ are a sequence of  i.i.d.  Poisson
measures with common intensity measure $dsdz$ on
$[0,\infty) \times [0,\infty)$. 

For each $N\geq 1$, let $h_{ij}=\frac{1}{N}h$
and $\phi_{i}=\phi$ and
we consider the Hawkes process
$\left(Z^{N,1}_t,\cdots,Z^{N,N}_t\right)_{t\geq 0}$, where
\begin{equation} \label{N-dim-Hawkes-process-eq}
Z^{N,i}_t=\int_0^t \int_0^\infty I_{\left\{z \leq \phi \left(N^{-1}\sum_{j=1}^N\int_0^{s-}h(s-u)dZ_u^{N,j}\right)\right\}}
\pi^i(ds\,dz).
\end{equation}

Let $m_t $ be the unique non-decreasing locally bounded solution of the equation, see e.g. Delattre et al. \cite{Delattre}:
\begin{equation} \label{mean-mean-eq}
m_t = \int_0^t \phi\left( \int_0^s h(s-u)dm_u\right)ds, \;\;\text{ }\;\;t\geq 0.
\end{equation}

Consider the equation:
\begin{equation}\label{Hawkes-mean-eq}
\tilde Z_t = \int_0^t\int_0^\infty I_{ \left\{z \leq \phi\left(\int_0^s 
h(s-u)d \mathbb E(\tilde Z_u)\right)\right\}}\pi(ds\,dz),\;\;\text{ }\;\;t\geq 0,
\end{equation}
where $\pi(ds \, dz)$ is a Poisson measure on $[0,\infty)\times [0,\infty)$ with intensity measure $ds dz$.
Note that as observed in \cite{Delattre}, $\tilde{Z}_{t}$ given in \eqref{Hawkes-mean-eq} 
is an inhomogeneous Poisson process with intensity $m_{t}$ given in \eqref{mean-mean-eq}.
 
The \textit{mean process} of the Hawkes processes is defined by
$\left(Z^{N,1}_t,\cdots,Z^{N,N}_t\right)_{t\geq 0}$:
\begin{equation} \label{Hawkes-mean-process-eq-0}
\overline{Z}^{N}_t=\frac{1}{N}\sum_{i=1}^N Z^{N,i}_t,~t\geq 0.
\end{equation}
Limit theorems including fluctuations, large and moderate deviations for the mean process have been studied in \cite{GaoFZhu-b}.

The \textit{mean-field} of the Hawkes processes is defined as 
the stochastic process taking values in $ M[0,T]$:
\begin{equation}\label{empirical-measure-def-eq}
L^N(t,dx)=\frac{1}{N}\sum_{i=1}^N \delta_{Z^{N,i}_t}(dx),~~0\leq t\leq T.
\end{equation}
Under this setup, Delattre et al. \cite{Delattre} showed that
$L^N$ converges to its mean-field limit:
\begin{equation*}
L^N\to \mathcal L \mbox { in } M[0,T],
\end{equation*}
where  $\mathcal L_t(dx)$  is the distribution of $\tilde Z_t$.

\subsubsection*{Large deviation principle}
In probability theory, a sequence $(P_{n})_{n\in\mathbb{N}}$ of probability measures on a topological space $X$ 
is said to satisfy the large deviation principle (LDP) with rate function $I:X\rightarrow\mathbb{R}\cup\{\infty\}$ if $I$ is non-negative, 
lower semicontinuous and for any measurable set $A$, with $A^{o}$ denoting ithe interior of $A$ and $\overline{A}$ being its closure, 
\begin{equation}\label{eqn:LDP}
-\inf_{x\in A^{o}}I(x)\leq\liminf_{n\rightarrow\infty}\frac{1}{n}\log P_{n}(A)
\leq\limsup_{n\rightarrow\infty}\frac{1}{n}\log P_{n}(A)\leq-\inf_{x\in\overline{A}}I(x).
\end{equation}
Note that \eqref{eqn:LDP} is equivalent to 
$\limsup_{n\rightarrow\infty}\frac{1}{n}\log P_{n}(F)\leq-\inf_{x\in F}I(x)$
for any closed subset $F$ and $\liminf_{n\rightarrow\infty}\frac{1}{n}\log P_{n}(O)\geq-\inf_{x\in O}I(x)$
for any open subset $O$.
We refer to Dembo and Zeitouni \cite{Dembo} 
and Varadhan \cite{VaradhanII} 
for general background regarding the theory of large deviations and their applications.

\subsection{Main results}\label{sec:main:results}

In this section, we state the main result of our paper, which is a large deviation principle
for the mean-field of Hawkes processes. 

\begin{thm}\label{main-LDP-thm-1}  Suppose (A.1), (A.2)  and   (A.3) hold.  For any $\mu\in  M[0,T]$, define
\begin{equation}\label{main-LDP-thm-1-rate-function}
\begin{aligned}
I(\mu) =&
\left\{
\begin{array}{l}\displaystyle\int_0^T\int_{0}^\infty \bigg( {G_\mu(t,x)} \log {G_\mu(t,x)} -{G_\mu(t,x)}+1\bigg)  \phi\left(\int_0^{t} h(t-s)d\bar{\mu}_s\right)\mu_t(dx)dt,\\
\quad\quad\quad\quad\quad\quad\quad\quad\quad \mbox{ if } \partial_tF_\mu(t,x)dt\ll f_\mu(t,x)dt \mbox{ for each } x\in\mathbb N,\\
\hbox{}\\
+\infty, \quad\quad\quad\quad\quad\quad\quad\quad\quad\mbox{otherwise},
\end{array}
\right.
\end{aligned}
\end{equation}
where
\begin{equation*}
G_\mu(t,x)=\frac{-\partial_tF_\mu(t,x)}{f_\mu(t,x)\phi\left( \int_0^th(t-u)d\bar{\mu}_u\right)}.
\end{equation*} 
Then for any closed subset $F\subset M[0,T]$,
\begin{equation}\label{main-LDP-thm-1-ub}
\limsup_{N\to \infty}\frac{1}{N}\log \mathbb P\left(L^N \in F\right)\leq -\inf_{\mu\in F}I(\mu),
\end{equation}
and for any open subset $O\subset M[0,T]$, 
\begin{equation} \label{main-LDP-thm-1-lb}
\liminf_{N\to\infty}\frac{1}{N}\log \mathbb P \left(L^N\in O\right)
\geq-\inf_{\mu\in O}I(\mu).
\end{equation}
\end{thm}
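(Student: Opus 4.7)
The plan is to follow the classical program for large deviations of empirical measures in mean-field interacting particle systems, adapted to the Hawkes setting through the Poisson-driven SDE representation \eqref{N-dim-Hawkes-process-eq}. Three ingredients are needed: (i) exponential tightness of $(L^N)$ in the path space $M[0,T]$; (ii) an upper bound obtained via exponential martingales built from the driving Poisson measures $\pi^i$; (iii) a matching lower bound obtained by constructing a tilted Hawkes system whose empirical measure concentrates on the target $\mu$, with the normalized log-likelihood ratio converging to $I(\mu)$.

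For exponential tightness I would exploit assumption (A.2), which guarantees $\alpha\Vert h\Vert_{L^{1}[0,T]}<1$, to derive uniform exponential moment bounds on $Z^{N,i}_T$ by iterating the inequality $\lambda^{N,i}_t\le \phi(0)+\alpha\cdot N^{-1}\sum_{j}\int_{0}^{t-} h(t-s)dZ^{N,j}_s$. Combined with a modulus-of-continuity criterion on $M_1(\mathbb N)$ under $W_1$, this produces exponentially good compact sets and reduces the proof to a local (weak) LDP near each $\mu$. For the upper bound I would proceed in the Donsker--Varadhan spirit. Given any bounded test function $\psi(t,x)$, Lipschitz in $x$, the process
\begin{equation*}
\exp\!\left(\sum_{i=1}^{N}\int_{0}^{T}\psi(s,Z^{N,i}_{s-})dZ^{N,i}_{s}-\sum_{i=1}^{N}\int_{0}^{T}(e^{\psi(s,Z^{N,i}_{s-})}-1)\lambda^{N,i}_{s}ds\right)
\end{equation*}
is a mean-one martingale by the Poisson martingale structure. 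Rewriting both sums as integrals against $L^{N}$, applying Chebyshev, taking $N\to\infty$ (using that the pre-limit intensity $\phi(N^{-1}\sum_{j}\int_{0}^{s-}h(s-u)dZ^{N,j}_{u})$ is close to $\phi(\int_{0}^{s}h(s-u)d\bar\mu_{u})$ on the event $L^{N}\approx\mu$), and finally optimizing over $\psi$, yields a Legendre-type variational expression whose maximizer is $\psi(t,x)=\log G_{\mu}(t,x)$. The resulting supremum collapses to the Poisson relative-entropy density $G\log G-G+1$ integrated against the reference intensity, which is exactly \eqref{main-LDP-thm-1-rate-function}.

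For the lower bound, given $\mu\in M[0,T]$ with $I(\mu)<\infty$, I would first invoke the approximation theorem announced in Section \ref{sec:rate:function} to approximate $\mu$ by nicer measure flows for which $G_\mu$ is bounded and bounded away from $0$, and then define a tilted system of Hawkes processes by replacing $\pi^{i}$ with a Poisson measure whose intensity is multiplied by $G_\mu(s,Z^{N,i}_{s-})$. The existence, uniqueness and law-of-large-numbers for such perturbed mean fields established in Section \ref{sec:LLN} then identifies the limit of the tilted empirical measure as $\mu$. A Girsanov computation for Poisson random measures gives the log-Radon-Nikodym derivative
\begin{equation*}
\sum_{i=1}^{N}\int_{0}^{T}\log G_\mu(s,Z^{N,i}_{s-})dZ^{N,i}_{s}-\sum_{i=1}^{N}\int_{0}^{T}(G_\mu(s,Z^{N,i}_{s-})-1)\lambda^{N,i}_{s}ds,
\end{equation*}
and, normalized by $N$, this quantity converges under the tilted law to $\int_{0}^{T}\int_{\mathbb N}(G_\mu\log G_\mu-G_\mu+1)\phi(\int_{0}^{t}h(t-s)d\bar\mu_{s})\mu_{t}(dx)dt=I(\mu)$, as required.

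The main obstacle will be the nonlocal, path-dependent form of the intensity $\phi(\int_{0}^{t}h(t-s)d\bar\mu_{s})$: tilting any single particle alters the empirical mean $\overline{Z}^{N}_{t}$ and hence every other particle's intensity, so standard propagation-of-chaos bounds must be replaced by a feedback-aware Grönwall argument controlled by the contraction $\alpha\Vert h\Vert_{L^{1}}<1$. A second delicate point is that the rate function is only finite when $\partial_{t}F_\mu(t,x)dt\ll f_\mu(t,x)dt$, so for the lower bound one cannot hope to tilt directly toward a general $\mu$; the approximation theorem in Section \ref{sec:rate:function}, which produces smooth approximants with $I$-convergent costs, is what closes this gap and makes the whole scheme go through.
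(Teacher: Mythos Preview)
Your proposal is correct and mirrors the paper's own proof almost exactly: the paper establishes exponential tightness via uniform exponential moment bounds on $\overline{Z}^{N}_T$ (imported from \cite{GaoFZhu-b}, where the condition $\alpha\Vert h\Vert_{L^1[0,T]}<1$ is indeed the driver), proves the compact-set upper bound through the exponential martingale $\mathcal{E}^{N,\varphi}_T$ and a covering argument that realizes the sup-inf exchange, and obtains the lower bound by tilting to the perturbed system \eqref{N-dim-perturbation-Hawkes-process-eq}, invoking the LLN of Theorem~\ref{perturbation-mean-fields-LLN-thm-1}, and reducing to bounded $\varphi$ via the approximation Theorem~\ref{ldp-rate-funtion-Property-4}. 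The only cosmetic difference is that the paper's LLN for the perturbed mean field is proved by tightness plus uniqueness of the limiting nonlinear equation \eqref{perturbation-mean-fields-equation} rather than by a direct ``feedback-aware Gr\"onwall'' estimate, but this does not affect the architecture you outlined.
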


The proof of Theorem~\ref{main-LDP-thm-1} will be provided
in Section~\ref{LDPMeanFieldsSection}. 
The proof of the upper bound \eqref{main-LDP-thm-1-ub}
will be provided in Section~\ref{sec:upper:bound},
which relies on establishing the exponential tightness. 
The proof of the lower bound \eqref{main-LDP-thm-1-lb}
will be provided in Section~\ref{sec:lower:bound},
and the main idea is to optimize over tilted probability measures
such that under a tilted probability measure, the mean-field
model is perturbed, and the existence, uniqueness and a law of large numbers result for the perturbed mean fields
will be provided in the following Section~\ref{sec:LLN}.
Finally, we provide the 
analysis of the rate function $I(\mu)$ in Section~\ref{sec:rate:function}
that will be used to match the upper and lower bounds.

\section{Law of large numbers for perturbed mean fields}\label{sec:LLN}

In this section, we obtain existence, uniqueness and a law of large numbers result for perturbed mean fields. 
First of all, we set
\begin{align*}
&B([0,T]\times\mathbb N):=\left\{
\varphi: [0,T]\times \mathbb N\to [-\infty,+\infty)\quad\text{is measurable}\right\}.
\end{align*}

For any measurable function  $\psi: [0,T]\times \mathbb N\to \mathbb R$,    we define  $\nabla \psi(s,x):=\psi(s,x)-\psi(s,x-1)$ for any $ x=1,2,\ldots$, and $\nabla \psi(s,0):= \psi(s,0)$, and for any $s\in[0,T]$,   denote by  $\nabla\psi(s): \mathbb N\ni x\to \nabla \psi(s,x)$.
We also define the following two spaces:
$$
 C^{1,b}([0,T]\times\mathbb N ):=\left\{\psi\text{ is  continuous  differentiable in $t$}, \sup_{t\in [0,T]}\sup_{x\in \mathbb N}|\psi(t,x)|<\infty\right \},
$$
and
$$
C^{1,lip}([0,T]\times\mathbb N):=\left\{\psi\text{ is  continuous  differentiable in $t$}, \sup_{t\in [0,T]}\sup_{x\in \mathbb N}|\nabla \psi(t,x)|<\infty\right \}.
$$

For any $\varphi\in B([0,T]\times\mathbb N)$,
we consider the perturbed SDEs, 
$\left(Z^{\varphi,N,1}_t,\cdots,Z^{\varphi,N,N}_t\right)_{t\geq 0}$, defined as:
\begin{equation}\label{N-dim-perturbation-Hawkes-process-eq}
Z^{\varphi,N,i}_t
=\int_0^t \int_0^\infty  
I_{\left\{z \leq e^{\varphi\left(s,Z^{\varphi,N,i}_{s-}\right)}\phi \left( \int_0^{s-}h(s-u)d\overline{Z}_u^{\varphi,N}\right)\right\}}
\pi^i(ds\,dz),~i=1,\cdots, N,
\end{equation}
where
\begin{equation} \label{N-dim-perturbation-Hawkes-process-mean-eq}
\overline{Z}^{\varphi,N}_t=\frac{1}{N}\sum_{i=1}^N Z^{\varphi,N,i}_t,~t\geq 0.
\end{equation}

First, we show that the perturbed mean-field model \eqref{N-dim-perturbation-Hawkes-process-eq} has a unique strong solution
with finite second moments.

\begin{lem}\label{N-dim-perturbation-Hawkes-process-bounded-Thm}
Suppose (A.1), (A.2)  and   (A.3) hold.  If we further assume that
\begin{equation} \label{N-dim-perturbation-Hawkes-process-varphi-bounded-c}
\sup_{t\in [0,T]}\sup_{x\in \mathbb N}{|\varphi(t,x)|}<\infty,
\end{equation}
then, the equation \eqref{N-dim-perturbation-Hawkes-process-eq}  has a unique strong solution   $\left(Z^{\varphi,N,1}_t,\cdots,Z^{\varphi,N,N}_t\right)_{t\in [0,T]}$ in $D\left([0,T], \mathbb N^N\right)$.  Moreover, the solution satisfies  
\begin{equation}\label{uniform:L2:finite}
\sum_{1\leq i \leq N}  \mathbb E\left(\left|Z_t^{\varphi,N,i}\right|^2\right)<\infty,~t\geq 0.
\end{equation}
\end{lem}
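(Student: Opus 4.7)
My plan is to adapt the Picard-iteration construction for Hawkes processes coupled through the common Poisson measures $\pi^i$ (as in \cite{Delattre}) to the perturbed setting, and then to derive \eqref{uniform:L2:finite} from the compensator formula combined with a Volterra-type Gronwall argument.

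For existence and uniqueness, I would set $Z^{[0],N,i}_t\equiv 0$ and iteratively define
\[
Z^{[k+1],N,i}_t:=\int_0^t\int_0^\infty I_{\left\{z\le\lambda^{[k],N,i}_s\right\}}\pi^i(ds\,dz),\quad \lambda^{[k],N,i}_s:=e^{\varphi(s,Z^{[k],N,i}_{s-})}\phi\!\left(\int_0^{s-}h(s-u)\,d\overline{Z}^{[k],N}_u\right),
\]
writing $M:=\sup|\varphi|<\infty$. Using the Lipschitz bound $\phi(y)\le\phi(0)+\alpha y$ from (A.2), the intensity is dominated by $e^M\bigl(\phi(0)+\alpha\int_0^{s-}h(s-u)\,d\overline{Z}^{[k],N}_u\bigr)$; combined with the local integrability of $h$ from (A.1), a linear Volterra argument yields that $\mathbb E\,\overline{Z}^{[k],N}_t$ is bounded on $[0,T]$ uniformly in $k$. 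For convergence of the iterates, setting $u_k(t):=\sum_i\mathbb E|Z^{[k+1],N,i}_t-Z^{[k],N,i}_t|$, standard Poisson coupling gives
\[
u_k(t)\le\sum_i\int_0^t\mathbb E\bigl|\lambda^{[k],N,i}_s-\lambda^{[k-1],N,i}_s\bigr|ds.
\]
I would split the pointwise intensity difference into a $\phi$-Lipschitz contribution of order $e^M\alpha|Y^{[k]}_s-Y^{[k-1]}_s|$ (controlled by $u_{k-1}$ via Fubini) and a contribution of the form $2e^M(\phi(0)+\alpha Y^{[k-1]}_s)I_{\{Z^{[k],N,i}_{s-}\ne Z^{[k-1],N,i}_{s-}\}}$ encoding the non-smoothness of $\varphi$. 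A renewal-type Gronwall argument then gives $\sum_k u_k(T)<\infty$ and $L^1$-convergence to a strong solution of \eqref{N-dim-perturbation-Hawkes-process-eq}; uniqueness follows by applying the same coupling to any two solutions.

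For the second moment bound, exchangeability of the components reduces \eqref{uniform:L2:finite} to $\mathbb E[(Z^{\varphi,N,1}_t)^2]<\infty$ on $[0,T]$. The jump identity $(Z^{\varphi,N,1}_t)^2=\int_0^t(2Z^{\varphi,N,1}_{s-}+1)\,dZ^{\varphi,N,1}_s$ and the compensation formula give
\[
\mathbb E[(Z^{\varphi,N,1}_t)^2]=\mathbb E[Z^{\varphi,N,1}_t]+2\int_0^t\mathbb E\bigl[Z^{\varphi,N,1}_{s-}\lambda^{\varphi,N,1}_s\bigr]ds.
\]
Combining the intensity bound above with Cauchy-Schwarz, the local boundedness of $h$ from (A.1), and the Jensen/exchangeability estimate $\mathbb E[(\overline{Z}^{\varphi,N}_s)^2]\le\mathbb E[(Z^{\varphi,N,1}_s)^2]$, I obtain a linear Gronwall inequality for $t\mapsto\mathbb E[(Z^{\varphi,N,1}_t)^2]$ on $[0,T]$, yielding the claim.

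The main obstacle is the lack of Lipschitz regularity of $\varphi(s,\cdot)$: since $\varphi$ is only assumed bounded, $e^{\varphi(s,Z^{[k],N,i}_{s-})}$ does not automatically approach $e^{\varphi(s,Z^{[k-1],N,i}_{s-})}$ as the iterates converge. This is circumvented by the integer-valuedness of the processes, which yields the effective estimate $|e^{\varphi(s,x)}-e^{\varphi(s,y)}|\le 2e^M I_{\{x\ne y\}}\le 2e^M|x-y|$, sufficient to close the contraction once paired with the uniform first-moment bound.
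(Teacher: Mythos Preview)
The main gap in your proposal is that the Picard contraction does not close without first truncating $\phi$. In your intensity-difference splitting, the term
\[
2e^M\bigl(\phi(0)+\alpha Y^{[k-1]}_s\bigr)\,I_{\{Z^{[k],N,i}_{s-}\ne Z^{[k-1],N,i}_{s-}\}}
\]
carries the \emph{random, unbounded} factor $Y^{[k-1]}_s=\int_0^{s-}h(s-u)\,d\overline{Z}^{[k-1],N}_u$ multiplying the difference indicator. Even after using $I_{\{x\ne y\}}\le|x-y|$ for integers, the expectation $\mathbb E\bigl[Y^{[k-1]}_s\,|Z^{[k],N,i}_{s-}-Z^{[k-1],N,i}_{s-}|\bigr]$ cannot be bounded by $C\,u_{k-1}(s)$: the two factors are correlated through the common Poisson measures, and your ``uniform first-moment bound'' on $Y^{[k-1]}_s$ alone yields at best a constant, or via Cauchy--Schwarz a non-contractive $\sqrt{u_{k-1}(s)}$ term. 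A ``renewal-type Gronwall'' does not rescue this, and the same obstruction would appear if you switched to an $L^2$ iteration.

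The paper handles exactly this point by introducing the truncated rate $\phi_L(x):=\phi(x\wedge L)$. With $\phi_L$ bounded, the offending multiplicative factor becomes the constant $1+\sup_{0\le x\le L}\phi(x)$, and the Picard iteration (run in an $L^2$--$\sup$ norm) closes to give existence and uniqueness for the truncated equation \eqref{N-dim-perturbation-truncated-Hawkes-process-eq}. The truncation is then removed by localization: one derives an $L^2$ bound \emph{uniform in $L$} via stopping times $T_M$, shows that $\tau_L=\inf\{t:\overline{Z}^{L,\varphi,N}_t\ge L\}\to\infty$ a.s.\ along $L=2^k$, and patches the truncated solutions into a solution of \eqref{N-dim-perturbation-Hawkes-process-eq}. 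Your integer-valued estimate $|e^{\varphi(s,x)}-e^{\varphi(s,y)}|\le 2e^M|x-y|$ is correct and is in fact the same device the paper uses for the $c_1$-term; what is missing from your argument is the truncation/localization step that makes the $\phi$-factor deterministic before running the contraction. A secondary point: your Gronwall for the second moment is formally an a priori bound and also needs a stopping-time justification, which the paper supplies via $T_M$.
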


\begin{proof} 
Let us first prove that 
the equation \eqref{N-dim-perturbation-Hawkes-process-eq} has a unique strong solution.  For each $L\geq 1$, we consider the perturbed SDEs with truncated rate  $\phi_{L}(x)$:
\begin{equation}\label{N-dim-perturbation-truncated-Hawkes-process-eq}
Z^{L,\varphi,N,i}_t
=\int_0^t \int_0^\infty  
I_{\left\{z \leq e^{\varphi\left(s,Z^{L,\varphi,N,i}_{s-}\right)}\phi_L \left( \int_0^{s-}h(s-u)d\overline{Z}_u^{L,\varphi,N}\right)\right\}}
\pi^i(ds\,dz),~i=1,\ldots, N,
\end{equation}
where
\begin{equation} \label{N-dim-perturbation-truncated-Hawkes-process-mean-eq}
\phi_{L}(x):=\phi(x\wedge L),~\quad
\overline{Z}^{L,\varphi,N}_t:=\frac{1}{N}\sum_{i=1}^N Z^{L,\varphi,N,i}_t,~t\geq 0.
\end{equation}

Now, let us prove the existence of the equation  \eqref{N-dim-perturbation-truncated-Hawkes-process-eq}  by a Picard iteration argument. 
Let $Z^{i,0}_t := 0$ and, for $n\geq 1$,
\begin{equation}
Z_t^{i,n+1}:= \int_0^t \int_0^\infty  I_{\left\{z \leq e^{\varphi\left(s,Z_{s-}^{i,n}\right)}\phi_{L}\left( \int_0^{s-}h(s-u)d\overline{Z}_u^{n}\right)\right\}}\pi^i(ds\,dz)\,,
\end{equation}
where $\overline{Z}_t^{n}:=\frac{1}{N} \sum_{i=1}^N Z_t^{i,n}$.

Next, we define $\delta^{n}_t:=\mathbb E\left(\sup_{0\leq s\leq t} \left|Z_s^{i,n+1}-Z_s^{i,n}\right|^2\right)$.
Then, for $n\geq 0$,
\begin{equation*}\label{N-dim-perturbation-Hawkes-process-bounded-Thm-eq-2}
\begin{aligned}
\delta^{n}_t
&=
\mathbb{E}\bigg[\sup_{0\leq s\leq t}
\bigg|\int_0^s \int_0^\infty  I_{\left\{z \leq e^{\varphi\left(v,Z_{v-}^{i,n}\right)}\phi_{L} \left( \int_0^{v-}h(v-u)d\overline{Z}_u^{n}\right)\right\}}\pi^i(dv\,dz)
\\
&\qquad\qquad
-\int_0^s \int_0^\infty  I_{\left\{z \leq e^{\varphi\left(v,Z_{v-}^{i,n-1}\right)}\phi_{L} \left( \int_0^{v-}h(v-u)d\overline{Z}_u^{n-1}\right)\right\}}\pi^i(dv\,dz)\bigg|^{2}\bigg]
\\
&\leq
2\mathbb{E}\bigg[\sup_{0\leq s\leq t}
\bigg|\int_0^s \int_0^\infty  I_{\left\{z \leq e^{\varphi\left(v,Z_{v-}^{i,n}\right)}\phi_{L} \left( \int_0^{v-}h(v-u)d\overline{Z}_u^{n}\right)\right\}}(\pi^i(dv\,dz)-dvdz)
\\
&\qquad\qquad
-\int_0^s \int_0^\infty  I_{\left\{z \leq e^{\varphi\left(v,Z_{v-}^{i,n-1}\right)}\phi_{L} \left( \int_0^{v-}h(v-u)d\overline{Z}_u^{n-1}\right)\right\}}(\pi^i(dv\,dz)-dvdz)\bigg|^{2}\bigg]
\\
&\qquad
+2\mathbb{E}\bigg[\sup_{0\leq s\leq t}
\bigg|\int_0^s \int_0^\infty  I_{\left\{z \leq e^{\varphi\left(v,Z_{v-}^{i,n}\right)}\phi_{L} \left( \int_0^{v-}h(v-u)d\overline{Z}_u^{n}\right)\right\}}dvdz
\\
&\qquad\qquad
-\int_0^s \int_0^\infty  I_{\left\{z \leq e^{\varphi\left(v,Z_{v-}^{i,n-1}\right)}\phi_{L} \left( \int_0^{v-}h(v-u)d\overline{Z}_u^{n-1}\right)\right\}}dvdz\bigg|^{2}\bigg],
\end{aligned}
\end{equation*}
and furthermore, we have
\begin{equation*}
\begin{aligned}
\delta^{n}_t
&\leq
8\mathbb{E}\bigg[
\bigg|\int_0^t \int_0^\infty  I_{\left\{z \leq e^{\varphi\left(s,Z_{s-}^{i,n}\right)}\phi_{L} \left( \int_0^{s-}h(v-u)d\overline{Z}_u^{n}\right)\right\}}(\pi^i(ds\,dz)-dsdz)
\\
&\qquad\qquad
-\int_0^t \int_0^\infty  I_{\left\{z \leq e^{\varphi\left(s,Z_{s-}^{i,n-1}\right)}\phi_{L} \left( \int_0^{s-}h(s-u)d\overline{Z}_u^{n-1}\right)\right\}}(\pi^i(ds\,dz)-dsdz)\bigg|^{2}\bigg]
\\
&\qquad
+2\mathbb{E}\bigg[
\bigg(\int_0^t \bigg|\int_0^\infty  I_{\left\{z \leq e^{\varphi\left(s,Z_{s-}^{i,n}\right)}\phi_{L} \left( \int_0^{s-}h(s-u)d\overline{Z}_u^{n}\right)\right\}}
\\
&\qquad\qquad\qquad\qquad
-I_{\left\{z \leq e^{\varphi\left(s,Z_{s-}^{i,n-1}\right)}\phi_{L} \left( \int_0^{s-}h(s-u)d\overline{Z}_u^{n-1}\right)\right\}}dz\bigg|ds\bigg)^{2}\bigg]
\\
&\leq 
8\mathbb E\bigg( \int_0^t  \int_0^\infty \bigg|I_{\left\{z \leq e^{\varphi\left(s,Z_{s-}^{i,n}\right)}\phi_{L} \left( \int_0^{s-}h(s-u)d\overline{Z}_u^{n}\right)\right\}}
\\
&\qquad\qquad\qquad\qquad
-I_{\left\{z \leq e^{\varphi\left(s,Z_{s-}^{i,n-1}\right)}\phi_{L} \left( \int_0^{s-}h(s-u)d\overline{Z}_u^{n-1}\right)\right\}}\bigg|^2 dz ds\bigg)\\
&
\qquad\qquad
+2T \mathbb E\bigg(\int_0^t \bigg| e^{\varphi\left(s,Z_{s-}^{i,n}\right)}\phi_{L} \left( \int_0^{s-}h(s-u)d\overline{Z}_u^{n}\right) \\
&
\qquad\qquad\qquad\qquad\qquad
- e^{\varphi\left(s,Z_{s-}^{i,n-1}\right)}\phi_{L} \left( \int_0^{s-}h(s-u)d\overline{Z}_u^{n-1}\right)\bigg|^{2}ds\bigg),\\
\end{aligned}
\end{equation*}
where we used Doob's martingale inequality and Jensen's inequality.

Note that for any real numbers $a,b>0$, 
\begin{equation}
\int_{0}^{\infty}\left|I_{z\leq a}-I_{z\leq b}\right|^{2}dz
=\int_{0}^{\infty}\left|I_{z\leq a}-I_{z\leq b}\right|dz
=|a-b|.
\end{equation}
Therefore, we get
\begin{align*}
\delta_{t}^{n}
&\leq
8\mathbb E\bigg( \int_0^t  \bigg|e^{\varphi\left(s,Z_{s-}^{i,n}\right)}\phi_{L} \left( \int_0^{s-}h(s-u)d\overline{Z}_u^{n}\right)
\\
&\qquad\qquad\qquad
-e^{\varphi\left(s,Z_{s-}^{i,n-1}\right)}\phi_{L} \left( \int_0^{s-}h(s-u)d\overline{Z}_u^{n-1}\right)\bigg| ds\bigg)\\
&
\qquad\qquad
+2T \mathbb E\bigg(\int_0^t \bigg| e^{\varphi\left(s,Z_{s-}^{i,n}\right)}\phi_{L} \left( \int_0^{s-}h(s-u)d\overline{Z}_u^{n}\right)
\\
&\qquad\qquad\qquad\qquad\qquad\qquad
- e^{\varphi\left(s,Z_{s-}^{i,n-1}\right)}\phi_{L} \left( \int_0^{s-}h(s-u)d\overline{Z}_u^{n-1}\right)\bigg|^{2}ds\bigg).
\end{align*}

By $\sup_{t\in [0,T]}\sup_{x\in \mathbb N}{|\varphi(t,x)|}<\infty$ and Lipschitz continuity of $\phi_{L}$, there exist constants $c_1,c_2\in (0,\infty)$ such that
\begin{equation*}\label{N-dim-perturbation-Hawkes-process-bounded-Thm-eq-4}
\begin{aligned}
&\bigg| e^{\varphi\left(s,Z_{s-}^{i,n}\right)}\phi_{L} \left( \int_0^{s-}h(s-u)d\overline{Z}_u^{n}\right)- e^{\varphi\left(s,Z_{s-}^{i,n-1}\right)}\phi_{L} \left( \int_0^{s-}h(s-u)d\overline{Z}_u^{n-1}\right)\bigg| 
\\
&\leq
\bigg| e^{\varphi\left(s,Z_{s-}^{i,n}\right)}-e^{\varphi\left(s,Z_{s-}^{i,n-1}\right)}\bigg|
\phi_{L} \left( \int_0^{s-}h(s-u)d\overline{Z}_u^{n}\right)
\\
&\qquad\qquad
+e^{\varphi\left(s,Z_{s-}^{i,n-1}\right)}
\bigg|\phi_L \left( \int_0^{s-}h(s-u)d\overline{Z}_u^{n}\right)
-\phi_{L} \left( \int_0^{s-}h(s-u)d\overline{Z}_u^{n-1}\right)\bigg| 
\\
\leq &c_1\left|Z^{i,n}_{s-}- Z^{i,n-1}_{s-}\right|
\left(1+\sup_{0\leq x\leq L}\phi(x)\right) +c_2\sup_{u\in[0,s)}\left|\overline{Z}_{u}^{n}- \overline{Z}_{u}^{n-1}\right|.
\end{aligned}
\end{equation*}

Thus, there exists a constant $c\in (0,\infty)$ that depends on $c_1,c_2,\sup_{0\leq x\leq L}\phi(x)$ such that
\begin{equation*}\label{N-dim-perturbation-Hawkes-process-bounded-Thm-eq-5}
\begin{aligned}
\delta^{n}_t\leq c\int_0^t \delta^{n-1}_sds,
\end{aligned}
\end{equation*}
which implies that 
\begin{equation*}
\delta^{n}_t\leq  \frac{(cNt)^n}{n!} \delta^{0}_T.  
\end{equation*}
Therefore,
\begin{equation*}
\sum_{n=1}^\infty  \delta^{n}_t<\infty.
\end{equation*}
Define 
\begin{equation*}
Z_t^{L,\varphi, N,i}:=\sum_{n=1}^\infty \left(Z^{i,n}_{t}-Z^{i,n-1}_{t}\right),~~t\in[0,T], ~i=1,\cdots,N.
\end{equation*}
Thus $\left(Z^{L,\varphi,N,1}_t,\cdots,Z^{L,\varphi,N,N}_t\right)_{t\in [0,T]}$  
is a solution of the equation  \eqref{N-dim-perturbation-truncated-Hawkes-process-eq}.
  
Next, we  prove the uniqueness of the solution of \eqref{N-dim-perturbation-truncated-Hawkes-process-eq}. Since  $\varphi$ and $\phi_L$ are bounded,   each solution  $\left(Z_t^1,\cdots, Z_t^N\right)_{t \in[0,T]}$ of   the equation  \eqref{N-dim-perturbation-truncated-Hawkes-process-eq} satisfies  
\begin{equation*}
\sum_{1\leq i \leq N}  \mathbb E\left(\left|Z_t^{i}\right|^2\right)<\infty,\qquad\text{for any $0\leq t\leq T$}.
\end{equation*}
 Let $\left(Z_t^1,\cdots, Z_t^N\right)_{t \in[0,T]}$ and $\left(Y_t^1,\cdots, Y_t^N\right)_{t \in[0,T]}$
be two solutions to  \eqref{N-dim-perturbation-truncated-Hawkes-process-eq} .  Set
\begin{equation*}
\zeta_t:=\mathbb E\left(\sup_{0\leq s\leq t} \left|Z_s^{i}-Y_s^{i}\right|^2\right).
\end{equation*}
Then, similar as in the proof for the existence, one can show that there exists some constant $c\in (0,\infty)$ such that
\begin{equation*}
\zeta_t\leq c\int_0^t \zeta_sds,\qquad t\in [0,T].
\end{equation*}
Thus,  by Gronwall's inequality, we conclude that $\zeta_T=0$. 
This implies the uniqueness of the solution of \eqref{N-dim-perturbation-truncated-Hawkes-process-eq}.

Now, let us prove that if $\left(Z^{N,1}_t,\cdots,Z^{N,N}_t\right)$  
is a solution of the equation  \eqref{N-dim-perturbation-Hawkes-process-eq}, then  for any $T>0$, we have
\begin{equation}\label{uniform:L2:finite-eq-1}
\sum_{1\leq i \leq N}  \mathbb E\left(\sup_{t\in[0,T]}\left|Z_t^{N,i}\right|^2\right)<\infty.
\end{equation}

For any $M\geq1$,  define
\begin{equation}\label{T:M:defn}
T_M:=\inf\left\{t\geq 0: \max_{1\leq i\leq N}Z_t^{N,i}\geq M\right\},
\end{equation}
and 
$$
D^{M}_t :=\mathbb E\left[\sup_{0\leq s\leq t}\left|Z_{s\wedge T_M}^{N,i}\right|^2\right]
=\mathbb{E}\left[\left|Z_{t\wedge T_M}^{N,i}\right|^{2}\right],
 $$
 which is independent of $i$. 
 
Note that  
\begin{equation*}
\sup_{t\in [0,T]}Z_t^{N,i}=Z_T^{N,i}<\infty,
\end{equation*}
and
\begin{equation*}
\lim_{M\to\infty} T_M=+\infty,\qquad\text{a.s.},
\end{equation*}
where $T_{M}$ is defined in \eqref{T:M:defn}.

Therefore, for any $t\in[0,T]$, 
\begin{equation*}\label{N-dim-perturbation-truncated-Hawkes-process-bounded-Thm-eq-1}
\begin{aligned}
D^{M}_t
&=\mathbb{E}\left[\left(\int_0^{t\wedge T_M} \int_0^\infty  I_{\left\{z \leq e^{\varphi\left(s,Z_{s-}^{N,i}\right)}\phi \left( \int_0^{s-}h(s-u)d\overline{Z}_u^{N}\right)\right\}}\pi^i(ds\,dz)\right)^{2}\right]
\\
&\leq 
2\mathbb{E}\left[\left(\int_0^{t\wedge T_M} \int_0^\infty  I_{\left\{z \leq e^{\varphi\left(s,Z_{s-}^{N,i}\right)}\phi \left( \int_0^{s-}h(s-u)d\overline{Z}_u^{N}\right)\right\}}(\pi^i(ds\,dz)-dsdz)\right)^{2}\right]
\\
&\qquad\qquad
+2\mathbb{E}\left[\left(\int_0^{t\wedge T_M} \int_0^\infty  I_{\left\{z \leq e^{\varphi\left(s,Z_{s-}^{N,i}\right)}\phi \left( \int_0^{s-}h(s-u)d\overline{Z}_u^{N}\right)\right\}}dsdz\right)^{2}\right]
\\
&=
2\mathbb{E}\left[\int_{0}^{t\wedge T_M} e^{\varphi\left(s,Z_{s-}^{N,i}\right)}\phi \left( \int_0^{s-}h(s-u)d\overline{Z}_u^{N}\right)ds\right]
\\
&\qquad\qquad
+2\mathbb{E}\left[\left(\int_{0}^{t\wedge T_M} e^{\varphi\left(s,Z_{s-}^{N,i}\right)}\phi \left( \int_0^{s-}h(s-u)d\overline{Z}_u^{N}\right)ds\right)^{2}\right]
\\
&\leq
 2\int_0^t \mathbb E\bigg(  e^{\varphi\left({s\wedge T_M} ,Z_{{s\wedge T_M} -}^{N,i}\right)}\phi \left( \int_0^{{s\wedge T_M}-}h({s\wedge T_M}-u)d\overline{Z}_u^{N}\right)  \bigg)ds\\
&\qquad\qquad
+2T\int_0^t \mathbb E\bigg( \bigg| e^{\varphi\left({s\wedge T_M},Z_{{s\wedge T_M}-}^{N,i}\right)}\phi\left( \int_0^{{s\wedge T_M}-}h({s\wedge T_M}-u)d\overline{Z}_u^{N}\right) \bigg|^2  \bigg)ds\\
\leq& c \int_0^t \left(D^{M}_s+1\right)ds,
\end{aligned}
\end{equation*}
for some constant $c>0$ independent of $M$ and $N$,
where we used the Jensen's inequality.
Thus,
\begin{equation*}
D:=\sup_{N\geq 1, M\geq 1}D^{M}_T\leq  e^{cT},
\end{equation*}
and 
\begin{equation}\label{uniform:L2:finite-eq-2} 
\sup_{N\geq 1}\max_{1\leq i\leq N}\mathbb E\left[\sup_{0\leq t\leq T}\left|Z_s^{N,i}\right|^2\right]\leq D.
\end{equation}
which implies \eqref{uniform:L2:finite-eq-1}.
 
Next, we use the the equation  \eqref{N-dim-perturbation-truncated-Hawkes-process-eq} to approximate  \eqref{N-dim-perturbation-Hawkes-process-eq}.  Set
$$
\tau_{L}:=\inf\left\{t>0:\overline{Z}_{t}^{L,\varphi,N}\geq L\right\}.
$$
Then for any $L_1\geq L$, on $\{\tau_{L}> T\}$,    ${Z}_{t}^{L,\varphi,N}={Z}_{t}^{L_1,\varphi,N}$ for $t\in [0,T]$. Thus 
$$
\tau_{L_1}\geq \tau_L~~a.s.
$$ 
Noting that for any $T>0$, 
$$
\mathbb P\left(\tau_L\leq T\right)\leq \mathbb P\left(\sup_{0\leq t\leq T}\overline{Z}_{t\wedge \tau_L}^{L,\varphi,N}\geq L\right)\leq \frac{\sqrt{D}}{L},
$$
we have that  $\sum_{k=1}^\infty\mathbb P\left(\tau_{2^k}\leq T\right)<\infty$.   Since   $\{\tau_{2^k},k\geq 1\}$  is  a.s. non-decreasing,   as $k\to\infty$,   
$$
\tau_{2^k}\to +\infty~~   a.s.  
$$
 Thus, the equation \eqref{N-dim-perturbation-Hawkes-process-eq}  has a strong solution ${Z}_{t}^{\varphi,N}$ satisfying that for any $k\geq 1$,
 $$
 {Z}_{t}^{\varphi,N}={Z}_{t}^{2^k,\varphi,N} \mbox{ for all } t\in [0,T],  \mbox{ on } \{\tau_{2^k}> T\}.
 $$
By \eqref{uniform:L2:finite-eq-2}, the solution satisfies  \eqref{uniform:L2:finite}.

Finally, let us prove uniqueness of the solution of \eqref{N-dim-perturbation-Hawkes-process-eq}.  Let $X_{t}^{\varphi,N}$ be another solution of  \eqref{N-dim-perturbation-Hawkes-process-eq}.  Define
$$
\sigma_L=\inf\left\{t\geq 0:\overline{X}_{t}^{\varphi,N}\geq L\right\}.
$$
Then, for any $T>0$, on $\{\sigma_{2^k}> T\}$,  $X_{t}^{\varphi,N}=Z_{t}^{L,\varphi,N}$ for any $t\in [0,T]$. By \eqref{uniform:L2:finite-eq-2},  we also have that
$$
\sigma_{2^k}\to +\infty~~   a.s.  
$$
Therefore,  $X_{t}^{\varphi,N}=Z_{t}^{L,\varphi,N}$ for any $t\in [0,T]$. This completes the proof.
\end{proof}

Similarly, we have the following result:

\begin{lem}\label{N-dim-perturbation-Hawkes-process-bounded-Thm-2}  Consider the perturbed SDE:
\begin{equation} \label{Hawkes-perturbation-mean-eq-1}
\tilde Z_t= \int_0^t\int_0^\infty I_{ \left\{z \leq e^{\varphi\left(s,\tilde Z_{s-}\right)} \phi\left(\int_0^s 
h(s-u)d \mathbb E(\tilde Z_u)\right)\right\}}\pi(ds\,dz),\;\;\text{ }\;\;t\geq 0,
\end{equation}
Suppose (A.1), (A.2)  and   (A.3).  If  $\varphi$ satisfies \eqref{N-dim-perturbation-Hawkes-process-varphi-bounded-c},
then, the equation \eqref{Hawkes-perturbation-mean-eq-1}  has a unique solution   $\left(\tilde Z^{\varphi}_t\right)_{t\in [0,T]}$ in $D\left([0,T],\mathbb N\right)$.   
Moreover, the solution satisfies  
$$
\mathbb E\left(\left|\tilde Z^{\varphi}_t\right|^2\right)<\infty,~t\geq 0.
$$
\end{lem}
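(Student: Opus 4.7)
The plan is to mirror the proof of Lemma \ref{N-dim-perturbation-Hawkes-process-bounded-Thm}, but adapted to the self-consistent (nonlinear in the law) structure of \eqref{Hawkes-perturbation-mean-eq-1}. First I truncate: set $\phi_L(x):=\phi(x\wedge L)$, which is bounded and globally Lipschitz, and study the truncated equation
\begin{equation*}
\tilde Z^{L,\varphi}_t = \int_0^t \int_0^\infty I_{\left\{z \leq e^{\varphi(s,\tilde Z^{L,\varphi}_{s-})}\phi_L\left(\int_0^{s-} h(s-u)d\mathbb E[\tilde Z^{L,\varphi}_u]\right)\right\}}\pi(ds\,dz).
\end{equation*}
For this I would run a Picard iteration: start with $\tilde Z^0\equiv 0$, and define $\tilde Z^{n+1}$ by substituting $\tilde Z^n_{s-}$ and $\mathbb E[\tilde Z^n_u]$ into the indicator. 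Setting $\delta^n_t:=\mathbb E\bigl[\sup_{0\le s\le t}|\tilde Z^{n+1}_s-\tilde Z^n_s|^2\bigr]$ and using Doob's inequality, Jensen's inequality, and the identity $\int_0^\infty|I_{z\le a}-I_{z\le b}|^2dz=|a-b|$ (exactly as in the previous lemma), the problem reduces to bounding the Lipschitz difference of the intensity $e^{\varphi(s,x)}\phi_L\bigl(\int_0^{s-}h(s-u)d\mathbb E[\tilde Z^n_u]\bigr)$ between consecutive iterates.

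The only new ingredient, compared to Lemma \ref{N-dim-perturbation-Hawkes-process-bounded-Thm}, is that the mean-field term is a Stieltjes integral against the deterministic measure $d\mathbb E[\tilde Z^n_u]$, rather than against the empirical trajectory $d\overline Z^{\varphi,N}_u$. To handle it I note that the compensator formula
\begin{equation*}
\mathbb E[\tilde Z^n_u]=\int_0^u \mathbb E\!\left[e^{\varphi(r,\tilde Z^{n-1}_{r-})}\phi_L\!\left(\int_0^{r-}h(r-v)d\mathbb E[\tilde Z^{n-1}_v]\right)\right]dr
\end{equation*}
combined with the boundedness of $\varphi$ and $\phi_L$ shows that $u\mapsto\mathbb E[\tilde Z^n_u]$ is absolutely continuous with a density bounded uniformly in $n$. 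Integration by parts (using (A.1), which gives $|h'|$ locally integrable) then yields
\begin{equation*}
\left|\int_0^s h(s-u)\,d\bigl(\mathbb E[\tilde Z^n_u]-\mathbb E[\tilde Z^{n-1}_u]\bigr)\right|\le C_h\sup_{u\le s}\mathbb E\bigl|\tilde Z^n_u-\tilde Z^{n-1}_u\bigr|\le C_h\sqrt{\delta^{n-1}_s}.
\end{equation*}
Combined with the Lipschitz property of $\phi_L$ and the bounds on $\varphi$, this gives $\delta^n_t\le c\int_0^t\delta^{n-1}_s\,ds$, so $\delta^n_T\le (cT)^n/n!\cdot\delta^0_T$. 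The series $\sum_n(\tilde Z^n-\tilde Z^{n-1})$ then converges in $L^2$ uniformly on $[0,T]$ to a solution $\tilde Z^{L,\varphi}$ of the truncated equation, and uniqueness follows by the same Gronwall argument applied to $\zeta_t:=\mathbb E[\sup_{s\le t}|\tilde Z_s-\tilde Y_s|^2]$.

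To remove the truncation I would argue as in the last part of the proof of Lemma \ref{N-dim-perturbation-Hawkes-process-bounded-Thm}: bounding $\mathbb E[|\tilde Z^{L,\varphi}_{t\wedge\tau_L}|^2]$ by Gronwall with $\tau_L:=\inf\{t\ge 0:\tilde Z^{L,\varphi}_t\ge L\}$ gives a uniform bound $\mathbb E[|\tilde Z^{L,\varphi}_{t\wedge\tau_L}|^2]\le e^{cT}$, whence Markov's inequality yields $\mathbb P(\tau_{2^k}\le T)\le \sqrt{D}/2^k$ and thus $\tau_{2^k}\uparrow\infty$ almost surely by Borel--Cantelli. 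Pasting the compatible solutions $\tilde Z^{2^k,\varphi}$ on $\{\tau_{2^k}>T\}$ produces a global strong solution of \eqref{Hawkes-perturbation-mean-eq-1} with $\mathbb E[|\tilde Z^\varphi_t|^2]<\infty$. Uniqueness on the untruncated equation follows by comparing any other solution $X^\varphi$ with the sequence of truncations up to its own exit time $\sigma_{2^k}$ from $[0,2^k]$ and letting $k\to\infty$.

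The step I expect to be the main obstacle is the Lipschitz estimate on the nonlocal-in-time mean-field term: in Lemma \ref{N-dim-perturbation-Hawkes-process-bounded-Thm} the empirical mean $\overline Z^n$ was a pointwise trajectory one could integrate $h$ against directly, whereas here one integrates $h$ against the Stieltjes measure $d\mathbb E[\tilde Z^n_u]$, and one must exploit the absolute continuity of $\mathbb E[\tilde Z^n_\cdot]$ together with (A.1) to convert the estimate into one involving $\sup_{u\le s}\mathbb E|\tilde Z^n_u-\tilde Z^{n-1}_u|$. Once this bound is in place, every subsequent step is a routine adaptation of the $N$-particle argument.
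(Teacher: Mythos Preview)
Your overall plan is correct and matches what the paper intends (the paper gives no proof, merely writing ``Similarly, we have the following result''). The Picard iteration for the truncated equation, the use of the identity $\int_0^\infty|I_{z\le a}-I_{z\le b}|^2\,dz=|a-b|$, and your handling of the Stieltjes term $\int_0^{s-}h(s-u)\,d\mathbb E[\tilde Z^n_u]$ via the absolute continuity of $u\mapsto\mathbb E[\tilde Z^n_u]$ together with (A.1) are all sound adaptations of the $N$-particle argument.

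There is one genuine gap in your removal-of-truncation step. You define $\tau_L:=\inf\{t\ge 0:\tilde Z^{L,\varphi}_t\ge L\}$ and propose to paste the solutions $\tilde Z^{2^k,\varphi}$ on the events $\{\tau_{2^k}>T\}$. In Lemma~\ref{N-dim-perturbation-Hawkes-process-bounded-Thm} the pasting works because the argument of $\phi_L$ is the \emph{pathwise} empirical mean $\overline Z^{L,\varphi,N}$, so on $\{\tau_L>T\}$ the truncation is inactive along that particular trajectory. In \eqref{Hawkes-perturbation-mean-eq-1}, however, the argument of $\phi_L$ is the \emph{deterministic} quantity $\int_0^{s-}h(s-u)\,d\mathbb E[\tilde Z^{L,\varphi}_u]$, which is fixed by the law and is completely insensitive to which event $\{\tau_L>T\}$ you restrict to. Consequently, knowing that the path $\tilde Z^{L,\varphi}_t(\omega)$ stays below $L$ tells you nothing about whether $\phi_L$ agrees with $\phi$ in the equation, and the compatibility $\tilde Z^{L,\varphi}=\tilde Z^{L',\varphi}$ on $\{\tau_L>T\}$ does not follow. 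Moreover, even if it did, the pasted process would have a law different from each $\tilde Z^{L,\varphi}$, so it is not clear it would satisfy the self-consistent equation.

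The fix is actually simpler than what you propose. Set $m^L_t:=\mathbb E[\tilde Z^{L,\varphi}_t]$. Since $\varphi$ is bounded and $\phi_L(x)\le\phi(0)+\alpha x$, the compensator formula gives
\[
m^L_t\le C_\varphi\int_0^t\Bigl(\phi(0)+\alpha\int_0^{s-}h(s-u)\,dm^L_u\Bigr)ds
\le C_\varphi\phi(0)T+C_\varphi\alpha\|h\|_{L^\infty[0,T]}\int_0^t m^L_s\,ds,
\]
so Gronwall yields $\sup_{L\ge1}\sup_{t\le T}m^L_t\le C_T<\infty$, and hence $\sup_{s\le T}\int_0^{s-}h(s-u)\,dm^L_u\le\|h\|_{L^\infty[0,T]}C_T$. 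For any $L$ exceeding this constant the truncation is vacuous, $\tilde Z^{L,\varphi}$ already solves \eqref{Hawkes-perturbation-mean-eq-1}, and the $L^2$ bound follows immediately from the boundedness of the intensity. Uniqueness is then the same Gronwall argument you describe.
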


\begin{thm}\label{N-dim-perturbation-Hawkes-process-exp-Thm-1}
Let  (A.1), (A.2)  and   (A.3) hold.  Assume 
\begin{equation} \label{N-dim-perturbation-Hawkes-process-varphi-exp-c}
\int_0^T e^{\varphi^*(t)}
dt<\infty,
\end{equation}
where $\varphi^*(t):=\sup_{x\in\mathbb N} \varphi(t,x)$.
Set  $\varphi_n(t,x):=((-n)\vee \varphi(t,x))\wedge n, ~n\geq 1$. 
 Define 
$$
Z^{(n)}_t = \left(Z^{(n),1}_t,\cdots,Z^{(n),N}_t\right):= \left(Z^{\varphi_n,N,1}_t,\cdots,Z^{\varphi_n,N,N}_t\right), 
 $$
 that is,  $Z^{(n)}_t = \left(Z^{(n),1}_t,\cdots,Z^{(n),N}_t\right)$ is the unique solution of the stochastic differential equation:
\begin{equation} \label{N-dim-perturbation-Hawkes-process-eq-n}
 Z^{(n),i}_t=\int_0^t \int_0^\infty  I_{\left\{z \leq e^{\varphi_{n}\left(s,Z_{s-}^{(n),i}\right)}\phi \left( \int_0^{s-}h(s-u)d\overline{Z}_u^{(n)}\right)\right\}}\pi^i(ds\,dz),~~i=1,\cdots,N,
\end{equation}
where $\overline{Z}_t^{(n)}=\frac{1}{N}\sum_{i=1}^N  Z^{(n),i}_t$.  
Then $\left\{Z^{(n)}_t, t\in [0,T]\right\},n\geq 1$ is tight  in $D\left([0,T], \mathbb N^N\right)$, 
and  its each limit point $Z_t^{\varphi,N}$ is  a weak solution  
of the equation \eqref{N-dim-perturbation-Hawkes-process-eq}.   
Moreover, each weak solution $Z_t^{\varphi,N}$ of 
the equation \eqref{N-dim-perturbation-Hawkes-process-eq}  satisfies  
\begin{equation*}
\sup_{1\leq i\leq N}\mathbb E\left[Z_t^{\varphi,N,i}\right]<\infty,~t\geq 0.
\end{equation*}
\end{thm}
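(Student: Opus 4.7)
The plan is to bootstrap from Lemma~\ref{N-dim-perturbation-Hawkes-process-bounded-Thm}, which applies to each bounded truncation $\varphi_n$, to the general case via a tightness-plus-limit-identification argument. For every $n\geq 1$ the truncation satisfies $\sup_{t,x}|\varphi_n(t,x)|\leq n$, so Lemma~\ref{N-dim-perturbation-Hawkes-process-bounded-Thm} produces a unique strong solution $Z^{(n)}$ of \eqref{N-dim-perturbation-Hawkes-process-eq-n} in $D([0,T],\mathbb N^{N})$ with finite second moments. Three ingredients are then needed: (i) uniform-in-$n$ first-moment control, (ii) tightness of $\{Z^{(n)}\}_{n\geq 1}$, and (iii) identification of any subsequential limit as a weak solution of \eqref{N-dim-perturbation-Hawkes-process-eq}.

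For (i), exchangeability in $i$ gives $m_n(t):=\mathbb E[Z^{(n),i}_t]$ independent of $i$. Taking expectations in \eqref{N-dim-perturbation-Hawkes-process-eq-n}, using $e^{\varphi_n}\leq e^{\varphi^*}$ together with the linear growth $\phi(y)\leq \phi(0)+\alpha y$ from (A.2), and then integrating by parts the Stieltjes integral against $\overline{Z}^{(n)}$ via the differentiability in (A.1) with $m_n(0)=0$, yields a Volterra-type inequality
\begin{equation*}
m_n(t)\leq \phi(0)\int_0^t e^{\varphi^*(s)}ds+\alpha\int_0^t e^{\varphi^*(s)}\Bigl(h(0)m_n(s)+\int_0^s h'(s-u)m_n(u)\,du\Bigr)ds.
\end{equation*}
Local boundedness of $h$ and $h'$ combined with integrability of $e^{\varphi^*}$ (assumption \eqref{N-dim-perturbation-Hawkes-process-varphi-exp-c}) places us in the scope of a (generalized) Gronwall inequality, and we conclude $\sup_{n\geq 1}\sup_{t\in[0,T]}m_n(t)<\infty$.

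For (ii), I would invoke the Aldous--Rebolledo criterion. Compact containment in $D([0,T],\mathbb N^{N})$ is immediate from Markov's inequality and the uniform bound on $m_n(T)$, since each coordinate is non-negative and non-decreasing. For the small-increment estimate, given a stopping time $\tau\leq T$ and $\delta>0$,
\begin{equation*}
\mathbb E\bigl[Z^{(n),i}_{(\tau+\delta)\wedge T}-Z^{(n),i}_{\tau}\bigr]\leq \mathbb E\int_{\tau}^{(\tau+\delta)\wedge T}e^{\varphi^*(s)}\Bigl(\phi(0)+\alpha\|h\|_{L^\infty[0,T]}\overline{Z}^{(n)}_{s}\Bigr)ds,
\end{equation*}
which vanishes uniformly in $n$ and $\tau$ as $\delta\to 0$ by the uniform moment bound together with the absolute continuity of the finite measure $e^{\varphi^*(s)}ds$. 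Prokhorov's theorem then extracts a subsequential weak limit, which we again denote by $Z^{\varphi,N}$.

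For (iii), the natural route is the martingale-problem formulation rather than the Poisson SDE itself, since the indicator in the Poisson integrand is discontinuous in its threshold. For each $n$, the process $Z^{(n),i}_t-\int_0^t e^{\varphi_n(s,Z^{(n),i}_{s-})}\phi\bigl(\int_0^{s-}h(s-u)d\overline{Z}^{(n)}_{u}\bigr)ds$ is an $\mathcal F_t$-martingale. By Skorokhod representation, realise the convergence almost surely in $D([0,T],\mathbb N^{N})$ along the extracted subsequence. Outside the a.s.-countable set of jump times of the limit, $Z^{(n),i}_{s-}\to Z^{\varphi,N,i}_{s-}$ in $\mathbb N$, so eventually $Z^{(n),i}_{s-}=Z^{\varphi,N,i}_{s-}$; together with pointwise convergence $\varphi_n\to\varphi$ and continuity of $\phi$, the compensator integrands converge Lebesgue-a.e., while the uniform first-moment bound supplies a uniformly integrable dominating function. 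Dominated convergence transfers the martingale property to the limit, so $Z^{\varphi,N}$ is a weak solution of \eqref{N-dim-perturbation-Hawkes-process-eq}, and $\sup_i\mathbb E[Z^{\varphi,N,i}_t]<\infty$ follows from Fatou applied to $m_n(t)$. The main obstacle is exactly this passage to the limit in the compensator: discreteness of $\mathbb N$ and the martingale-problem reformulation are what make it tractable.
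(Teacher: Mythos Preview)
Your proposal is essentially correct and follows the same overall architecture as the paper: uniform-in-$n$ first-moment bounds via Gronwall, tightness, then identification of limit points through the martingale problem. Two points of comparison are worth recording.

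First, for tightness the paper decomposes $Z^{(n),i}$ into its compensator plus the martingale $M^{(n),i}$ and then invokes the Jacod--Shiryaev criterion for locally square-integrable martingales (Theorem~VI.4.13 in \cite{JacodShiryaev}) by controlling the modulus of continuity of $\langle M^{(n),i}\rangle$. Your Aldous route, bounding $\mathbb E[Z^{(n),i}_{(\tau+\delta)\wedge T}-Z^{(n),i}_{\tau}]$ directly via the compensator and the absolute continuity of $e^{\varphi^*(s)}ds$, is equally valid here and arguably cleaner because it exploits monotonicity of counting processes and avoids the martingale detour.

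Second, and more substantively, you stop at the martingale problem: you show that $Z^{\varphi,N,i}_t-\int_0^t e^{\varphi(s,Z^{\varphi,N,i}_{s-})}\phi(\cdots)\,ds$ is a martingale for each $i$ and declare this a weak solution of \eqref{N-dim-perturbation-Hawkes-process-eq}. The statement, however, asks for a weak solution of the Poisson-driven SDE itself. The paper closes this gap by also identifying the predictable quadratic covariations $\langle M^i,M^j\rangle$ (zero off-diagonal, equal to the compensator on the diagonal) and then appealing to a representation theorem to produce independent Poisson random measures $\tilde\pi^i$ realising the SDE. This step is standard but not automatic, so you should add it. Finally, your Fatou argument for $\sup_i\mathbb E[Z^{\varphi,N,i}_t]<\infty$ handles limit points of the approximants; for \emph{arbitrary} weak solutions, note that the same Gronwall estimate you used for $m_n$ applies verbatim once you take expectations in \eqref{N-dim-perturbation-Hawkes-process-eq}.
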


\begin{proof} There exists a constant $c\in(0,\infty) $ such that
\begin{equation*}\label{N-dim-perturbation-Hawkes-process-exp-Thm-eq-1}
\begin{aligned}
D^{(n)}_t:
&=\mathbb E\left(Z^{(n),i}_t\right)=\mathbb{E}\left(\int_0^t \int_0^\infty  I_{\left\{z \leq e^{\varphi\left(s,Z_{s-}^{(n),i}\right)}\phi \left( \int_0^{s-}h(s-u)d\overline{Z}_u^{(n)}\right)\right\}}\pi^i(ds\,dz)\right)
\\
&=
 \int_0^t \mathbb E\bigg(  e^{\varphi\left(s,Z_{s-}^{(n),i}\right)}\phi \left( \int_0^{s-}h(s-u)d\overline{Z}_u^{(n)}\right)  \bigg)ds\\
&\leq c \int_0^Te^{\varphi_n^*(s)}ds+c \int_0^te^{\varphi_n^*(s)}D^{(n)}_sds.
\end{aligned}
\end{equation*}
Since  $ \int_0^Te^{\varphi_n^*(s)}ds\to  \int_0^Te^{\varphi^*(s)}ds<\infty $,  we have
$$
D:=\sup_{n\geq 1}D^{n}_T\leq  \sup_{n\geq 1}\int_0^Te^{\varphi_n^*(s)}ds e^{c\int_0^Te^{\varphi_n^*(s)}ds}<\infty,
$$
and it follows that
\begin{equation}\label{N-dim-perturbation-Hawkes-process-exp-Thm-eq-3}
\begin{aligned}
\lim_{M\to\infty }\mathbb P\left(\left|Z^{(n)}_T\right|\geq M\right) \leq \lim_{M\to\infty } \frac{ND}{M}=0.
\end{aligned}
\end{equation}

We also have that for any $0\leq s<t\leq T$,
\begin{equation*}\label{N-dim-perturbation-Hawkes-process-exp-Thm-eq-2}
\begin{aligned}
\left|Z^{(n),i}_t-Z^{(n),i}_s\right|
&= \int_s^t \int_0^\infty  I_{\left\{z \leq e^{\varphi_n\left(v,Z_{s-}^{(n),i}\right)}\phi \left( \int_0^{v-}h(v-u)d\overline{Z}_u^{(n)}\right)\right\}}\pi^i(dv\,dz)\\
&\leq c\left(1+  \overline{Z}_T^{(n)}\right)\int_s^t   e^{\varphi_n^*(v)}  dv+\left|M_t^{(n),i}-M_s^{(n),i}\right|,
\end{aligned}
\end{equation*}
where
$$
M_t^{(n),i}:=\int_0^t \int_0^\infty  I_{\left\{z \leq e^{\varphi_n\left(v,Z_{v-}^{(n),i}\right)}\phi \left( \int_0^{v-}h(v-u)d\overline{Z}_u^{(n)}\right)\right\}}(\pi^i(dv\,dz)-dvdz).
$$
Then, there exists a constant $c\in(0,\infty) $ such that
$$
\begin{aligned}
\left\<M^{(n),i}\right\>_t-\left\<M^{(n),i}\right\>_s=& \int_s^{t}e^{\varphi_n\left(v,Z_{v-}^{(n),i}\right)}\phi\left(\int_0^{s-} 
h(v-u)d \overline{Z}^{(n)}_u\right)  dv\\
\leq &  c\left(1+  \overline{Z}_T^{(n)}\right)\int_s^{t}e^{\varphi_n^*(v)}  dv.
\end{aligned}
$$
Thus, for any $\epsilon>0$
$$
\begin{aligned}
&\lim_{\delta\to 0}\limsup_{n\to\infty}\mathbb P\left(\sup_{|s-t|\leq \delta, 0\leq s,t\leq T}\left|\left\<M^{(n),i}\right\>_t-\left\<M^{(n),i}\right\>_s\right|>\epsilon \right)\\
\leq &\lim_{\delta\to 0}\limsup_{n\to\infty}\mathbb P\left(\sup_{|s-t|\leq \delta, 0\leq s,t\leq T}c\left(1+  \overline{Z}_T^{(n)}\right)\int_s^{t}e^{\varphi_n^*(v)}  dv>\epsilon \right)\\
\leq &\frac{c(1+D)}{\epsilon}\lim_{\delta\to 0}\limsup_{n\to\infty}  \sup_{|s-t|\leq \delta, 0\leq s,t\leq T}\int_s^{t}e^{\varphi_n^*(v)}  dv=0.
\end{aligned}
$$
By a criterion for tightness of locally square-integrable martingales (cf.  Theorem VI. 4.13 in \cite{JacodShiryaev}),    $\left\{M^{(n)}_t=\left(M_t^{(n),1}, \cdots, M_t^{(n),N}\right), t\in [0,T]\right\},n\geq 1$ is tight  in $D\left([0,T], \mathbb N^N\right)$.  Therefore, $\left\{Z^{(n)}_t, t\in [0,T]\right\},n\geq 1$ is tight  in $D\left([0,T], \mathbb N^N\right)$. Let   $Z_t $ be  any limit point. Then, for any $i=1,\cdots, N$,   
$$
M_t^i:=Z_t^i-\int_0^t  e^{\varphi\left(v,Z_{v-}^{i}\right)}\phi \left( \int_0^{v-}h(v-u)d\overline{Z}_u\right)dv
$$
is martingale, and
$$
\left\<M^i,M^j\right\>_t
=\left\{ \begin{array}{ll} 0& \mbox{ if } i\not=j,\\
                                   \int_0^t  e^{\varphi\left(v,Z_{v-}^{i}\right)}\phi \left( \int_0^{v-}h(v-u)d\overline{Z}_u\right)dv &\mbox{ if } i=j.
                                   \end{array}
\right.
$$\
By the representation theorem, there exist $N$-independent Poisson point processes $\tilde{\pi}^i$ such that
\begin{align*}
&Z_t^i-\int_0^t  e^{\varphi\left(v,Z_{v-}^{i}\right)}\phi \left( \int_0^{v-}h(v-u)d\overline{Z}_u\right)dv
\\
&=\int_0^t \int_0^\infty  I_{\left\{z \leq e^{\varphi\left(v,Z_{v-}^{i}\right)}\phi \left( \int_0^{v-}h(v-u)d\overline{Z}_u\right)\right\}}(\tilde{\pi}^i(dv\,dz)-dvdz),
\end{align*}
that is,
\begin{equation*}
Z_t^i=\int_0^t \int_0^\infty  I_{\left\{z \leq e^{\varphi\left(v,Z_{v-}^{i}\right)}\phi \left( \int_0^{v-}h(v-u)d\overline{Z}_u\right)\right\}}\tilde{\pi}^i(dv\,dz),~~i=1,\ldots,N.
\end{equation*}
This completes the proof.
\end{proof}
 
\begin{thm}\label{N-dim-perturbation-Hawkes-process-exp-Thm-2}  Suppose (A.1), (A.2)  and   (A.3) hold.  Assume that $\varphi$ satisfies  \eqref{N-dim-perturbation-Hawkes-process-varphi-exp-c}.

(1).  Set  $\varphi_n(t,x)=((-n)\vee \varphi(t,x))\wedge n, ~n\geq 1$. 
Let
$$
\tilde Z^{(n)}_t := \tilde Z^{\varphi_n}_t, 
 $$
be the unique solution of the stochastic differential equation:
\begin{equation}\label{Hawkes-perturbation-mean-eq-n}
\tilde  Z^{(n)}_t=\int_0^t \int_0^\infty  I_{\left\{z \leq e^{\varphi_n\left(s,\tilde Z_{s-}^{(n),i}\right)}\phi \left( \int_0^{s-}h(s-u)d\mathbb{E}({\tilde Z}_u^{(n)})\right)\right\}}\pi(ds\,dz).
\end{equation}
Then $\left\{\tilde Z^{(n)}_t, t\in [0,T]\right\},n\geq 1$ is tight  in $D\left([0,T], \mathbb N^N\right)$, and  its each limit point $\tilde Z_t$ is  a weak solution  
of the equation \eqref{Hawkes-perturbation-mean-eq-1}.   Moreover, any weak solution  $\tilde Z_t$ of the equation \eqref{Hawkes-perturbation-mean-eq-1} satisfies  
$$
\mathbb E\left(\tilde{Z}_t\right)<\infty,~t\geq 0,
$$
and the distribution $\mu_t^{\varphi}(dx)$ of any weak solution  $\tilde Z_t$ satisfies
\begin{equation} \label{perturbation-mean-fields-equation}
\begin{aligned}
\<\mu_t,\psi(t)\> = &\psi(0,0)+\int_0^t   \left\<\mu_s,\partial_s \psi(s)\right\> ds\\
&+\int_0^t   \left\<\mu_s,\nabla \psi(s)e^{\varphi(s)}\right\> \phi\left( \int_0^s h(s-u)d\bar{\mu}_u\right)ds,~\psi\in C^{1,lip}([0,T]\times\mathbb N).
\end{aligned}
\end{equation}

(2).  The equation \eqref{perturbation-mean-fields-equation} has a unique solution  in $\mu\in M[0,T]$. We denote the solution by $\mu_t^{\varphi}(dx)$.  

In particular,  the stochastic differential equation \eqref{Hawkes-perturbation-mean-eq-1} has a unique weak solution $\left\{\tilde Z_t^\varphi, t\in[0,T]\right\}$,  and $\left\{\tilde Z^{(n)}_t, t\in [0,T]\right\},n\geq 1$ converges weakly to  $\left\{\tilde Z_t^\varphi, t\in[0,T]\right\}$  in $D\left([0,T], \mathbb N^N\right)$.  
\end{thm}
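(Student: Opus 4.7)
Part (1) will closely mirror the scheme of Theorem \ref{N-dim-perturbation-Hawkes-process-exp-Thm-1}. First I would show that $m^{(n)}(t) := \mathbb{E}[\tilde Z^{(n)}_t]$ satisfies
\[
m^{(n)}(t) = \int_0^t \mathbb{E}\bigl[e^{\varphi_n(s,\tilde Z_{s-}^{(n)})}\bigr]\,\phi\!\left(\int_0^s h(s-u)\,dm^{(n)}(u)\right) ds \le c\int_0^t e^{\varphi_n^*(s)}\bigl(1+m^{(n)}(s)\bigr)\,ds,
\]
so Gronwall's inequality together with \eqref{N-dim-perturbation-Hawkes-process-varphi-exp-c} yields $\sup_n \sup_{t\in[0,T]} m^{(n)}(t)<\infty$, and an analogous computation as in Lemma \ref{N-dim-perturbation-Hawkes-process-bounded-Thm} gives a uniform second-moment bound. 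Tightness of $\{\tilde Z^{(n)}\}$ in $D([0,T],\mathbb N)$ then follows from the martingale-part criterion used in Theorem \ref{N-dim-perturbation-Hawkes-process-exp-Thm-1}, since both the compensator increment and the predictable quadratic variation on $[s,t]$ are bounded by $c(1+\tilde Z^{(n)}_T)\int_s^t e^{\varphi^*(v)}dv$. Any subsequential weak limit $\tilde Z$ is identified as a weak solution of \eqref{Hawkes-perturbation-mean-eq-1} by repeating the martingale-representation argument from the end of that proof, using uniform integrability to pass $m^{(n)}\to m := \mathbb{E}[\tilde Z]$ to the limit. The weak-form equation \eqref{perturbation-mean-fields-equation} is then obtained by applying Dynkin's formula to $\psi(t,\tilde Z_t^\varphi)$ for $\psi\in C^{1,lip}([0,T]\times\mathbb N)$: since $\tilde Z^\varphi$ is a unit-jump counting process with intensity $e^{\varphi(s,\tilde Z^\varphi_{s-})}\phi\!\bigl(\int_0^s h(s-u)dm(u)\bigr)$, taking expectations kills the martingale part and, upon rewriting in terms of the law $\mu^\varphi_s$ of $\tilde Z^\varphi_s$, produces \eqref{perturbation-mean-fields-equation} (the natural $+1$ shift being absorbed into the $\nabla$-operator).

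For Part (2), suppose $\mu^{(1)},\mu^{(2)}\in M[0,T]$ both solve \eqref{perturbation-mean-fields-equation}. The key observation is that $\bar\mu$ alone determines $\mu$: given any non-decreasing $\nu$ with finite first moment, specializing \eqref{perturbation-mean-fields-equation} to indicator test functions $\psi(t,x)=\mathbf{1}_{\{x\le k\}}$ (after a routine $C^{1,lip}$ approximation justified by the moment bound of Part (1)) yields the linear birth-chain ODE system
\[
\frac{d}{dt}\mu_t(\{k\}) = \mu_t(\{k-1\})e^{\varphi(t,k-1)}\lambda^\nu(t) - \mu_t(\{k\})e^{\varphi(t,k)}\lambda^\nu(t),\qquad \mu_0(\{k\}) = \delta_{k,0},
\]
with $\lambda^\nu(t):=\phi\!\bigl(\int_0^t h(t-u)d\bar\nu_u\bigr)$, which is solvable uniquely by iteration in $k$. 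Denoting this solution $\mu[\nu]$, equation \eqref{perturbation-mean-fields-equation} collapses to the scalar fixed-point equation $\bar\mu_t = \int_0^t \langle \mu[\bar\mu]_s,e^{\varphi(s)}\rangle \lambda^{\bar\mu}(s)\,ds$.

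Uniqueness of this fixed point is then established via a Gronwall estimate of the form
\[
\bigl|\bar\mu^{(1)}_t-\bar\mu^{(2)}_t\bigr| \le C_T\int_0^t e^{\varphi^*(s)}\sup_{r\le s}\bigl|\bar\mu^{(1)}_r-\bar\mu^{(2)}_r\bigr|\,ds,
\]
which uses the Lipschitz property of $\phi$, the $L^1$-bound on $h$, the uniform moment bound from Part (1), and the integrability \eqref{N-dim-perturbation-Hawkes-process-varphi-exp-c}; this forces $\bar\mu^{(1)}\equiv\bar\mu^{(2)}$ and hence $\mu^{(1)}\equiv\mu^{(2)}$. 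Combined with Part (1), uniqueness of \eqref{perturbation-mean-fields-equation} yields uniqueness in law for \eqref{Hawkes-perturbation-mean-eq-1} and the weak convergence of the full sequence $\tilde Z^{(n)}\to\tilde Z^\varphi$. The main obstacle is this Gronwall step: propagating the difference $\bar\mu^{(1)}-\bar\mu^{(2)}$ through the possibly unbounded weights $e^{\varphi(s,x)}$ requires showing that stability of the ODE system in the parameter $\nu$ is compatible with integrating against $\mu[\nu]$, whose tail is controlled only by the a priori moment bound from Part (1).
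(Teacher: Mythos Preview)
Your Part~(1) outline matches the paper's: it too simply says the argument is parallel to Theorem~\ref{N-dim-perturbation-Hawkes-process-exp-Thm-1}, and obtains \eqref{perturbation-mean-fields-equation} by It\^o's formula applied to $\psi(t,\tilde Z_t^\varphi)$.

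For Part~(2) your route is genuinely different. You propose a two-step reduction: (i) freeze $\bar\mu$, solve the resulting \emph{linear} birth chain to recover $\mu=\mu[\bar\mu]$, then (ii) prove uniqueness of the scalar fixed point $\bar\mu$ via a Gronwall bound on $|\bar\mu^{(1)}_t-\bar\mu^{(2)}_t|$. The paper instead compares two solutions $\mu,\nu$ directly in the weighted metric
\[
\eta_t := \sup_{s\le t}\Bigl(|\mu_s(\{0\})-\nu_s(\{0\})| + \sum_{n\ge 1} n\,|\mu_s(\{n\})-\nu_s(\{n\})|\Bigr),
\]
reads off from the atom-by-atom ODE (your birth-chain system, obtained by testing against $\psi=I_{\{n\}}$) the single inequality $\eta_t \le c(\bar\mu_T+\bar\nu_T+1)\int_0^t e^{\varphi^*(s)}\eta_s\,ds$, and closes by Gronwall in one shot. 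The point of this metric is that it simultaneously dominates the total-variation distance and $|\bar\mu_s-\bar\nu_s|$, so the feedback through $\phi(\int h\,d\bar\mu)$ is already controlled by $\eta$ itself; the ``stability of $\mu[\nu]$ in $\nu$'' that you flag as the main obstacle never has to be isolated. Your scheme would ultimately work (note incidentally that $e^{\varphi(s,x)}\le e^{\varphi^*(s)}$, so the weights are not unbounded in $x$; the real issue is exactly the parameter-stability of the linear chain), but carrying out step~(ii) forces you to prove a TV-stability estimate for $\nu\mapsto\mu[\nu]$ that essentially reproduces the paper's Gronwall loop inside your argument. The direct weighted-$\ell^1$ comparison is shorter and avoids the detour.
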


\begin{proof} The proof of (1) is similar to that of Theorem \ref{N-dim-perturbation-Hawkes-process-exp-Thm-1}. 
Thus, we only focus on the proof of (2). 
By It\^{o}'s formula, the distribution $\mu_t^{\varphi}(dx)$ of any weak solution  $Z_t$   satisfies the equation \eqref{perturbation-mean-fields-equation}.
Next, let us  prove uniqueness of the equation \eqref{perturbation-mean-fields-equation}. Let $\mu$ and $\nu$
be two solutions to \eqref{perturbation-mean-fields-equation}.  Set
\begin{equation*}
\begin{aligned}
\eta_t:=& \sup_{0\leq s\leq t}\left(|\mu_s(\{0\}) -\nu_s(\{0\})|+\sum_{n=1}^\infty n|\mu_s(\{n\}) -\nu_s(\{n\})|\right)\\
\leq &\bar{\mu}_t+\bar{\nu}_t+\sup_{0\leq s\leq t}\left(\mu_s(\{0\}) +\nu_s(\{0\}\right).
\end{aligned}
\end{equation*}
By taking $\psi(t,x)=I_{\{n\}}(x)$ in \eqref{perturbation-mean-fields-equation}, 
it follows that there exists a constant $c\in (0,\infty)$ such that for any $n\geq 1$,
$$
\begin{aligned}
|\mu_t(\{n\}) -\nu_t(\{n\})|\leq & c\int_0^t e^{\varphi^*(s)}\left(|\mu_s(\{n-1\}) -\nu_s(\{n-1\})|+|\mu_s(\{n\}) -\nu_s(\{n\})|\right)ds\\
&+c\int_0^t e^{\varphi^*(s)}(\mu_s(\{n-1\})+\mu_s(\{n\})+\nu_s(\{n-1\})+\nu_s(\{n\}))\eta_s ds,
\end{aligned}
$$
and
$$
\begin{aligned}
|\mu_t(\{0\}) -\nu_t(\{0\})|\leq & c\int_0^t e^{\varphi^*(s)}|\mu_s(\{0\}) -\nu_s(\{0\})|ds\\
&\qquad+c\int_0^t e^{\varphi^*(s)}(\mu_s(\{0\})+\nu_s(\{0\}))\eta_s ds,
\end{aligned}
$$
which implies that  there exists a constant $c\in (0,\infty)$ such that
\begin{equation*}
\eta_t\leq  c(\bar{\mu}_T+\bar{\nu}_T+1) \int_0^te^{\varphi^*(s)} \eta_s ds.
\end{equation*}
Thus, by Gronwall's inequality, we conclude that $\eta_T=0$. This completes the proof.
\end{proof}
 
Next, we obtain a law of large numbers result for the perturbed mean-field model.

\begin{thm} \label{perturbation-mean-fields-LLN-thm-1}Suppose (A.1), (A.2)  and   (A.3) hold. Assume that $\varphi$ satisfies \eqref{N-dim-perturbation-Hawkes-process-varphi-exp-c}.  Let  $\left\{Z^{\varphi, N,i}_t,t\in[0,T]\right\}$ be a weak solution of the SDEs \eqref{N-dim-perturbation-Hawkes-process-eq} and let $\mu_t^{\varphi}(dx)$ be the unique solution of the equation \eqref{Hawkes-perturbation-mean-eq-1}.  Set
\begin{equation} \label{N-dim-perturbation-mean-fields-eq}
L^{\varphi, N}_t:=\frac{1}{N}\sum_{i=1}^N\delta_{Z^{\varphi, N,i}_t}.
\end{equation}
Then for any   an open subset $O\ni (\mu_t^{\varphi})_{t\in[0,T]} $,
\begin{equation} \label{perturbation-mean-fields-LLN-thm-1-eq-1}
\lim_{N\to\infty}\mathbb P\left(L^{\varphi, N}_t\in O\right)=1.
\end{equation}
\end{thm}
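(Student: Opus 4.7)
The strategy follows the classical tightness / martingale-problem / uniqueness scheme: prove tightness of $\{L^{\varphi,N}\}$ in $D([0,T], M_1(\mathbb N))$, show that every subsequential limit satisfies the weak equation \eqref{perturbation-mean-fields-equation}, and conclude by the uniqueness statement in Theorem \ref{N-dim-perturbation-Hawkes-process-exp-Thm-2}(2). Since the target $\mu^\varphi$ is deterministic, weak convergence of $L^{\varphi,N}$ to $\mu^\varphi$ automatically upgrades to convergence in probability and hence yields \eqref{perturbation-mean-fields-LLN-thm-1-eq-1}.

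\emph{Step 1 (Tightness).} Theorem \ref{N-dim-perturbation-Hawkes-process-exp-Thm-1} gives $\sup_{N,i}\mathbb E[Z^{\varphi,N,i}_T]<\infty$, so $\sup_N \mathbb E[\overline{Z}^{\varphi,N}_T]<\infty$. Since each $Z^{\varphi,N,i}$ is a counting process with unit jumps, the path $t\mapsto L^{\varphi,N}_t$ is piecewise constant with total variation (in the $W_1$ sense) at most $\overline{Z}^{\varphi,N}_T$; this uniform moment bound together with an Aldous-type oscillation estimate on the conditional number of jumps in short intervals yields tightness of $L^{\varphi,N}$ in $D([0,T], M_1(\mathbb N))$, together with tightness of the auxiliary processes $\overline{Z}^{\varphi,N}$ and of each coordinate $\langle L^{\varphi,N}_\cdot, I_{\{k\}}\rangle$.

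\emph{Step 2 (Martingale identity).} For each $\psi\in C^{1,lip}([0,T]\times\mathbb N)$, apply It\^o's formula to $\psi(t, Z^{\varphi,N,i}_t)$, average over $i=1,\ldots,N$, and split into compensated and compensator parts to get
\begin{equation*}
\langle L^{\varphi,N}_t,\psi(t)\rangle = \psi(0,0) + \int_0^t \langle L^{\varphi,N}_s, \partial_s\psi(s)\rangle\, ds + \int_0^t \langle L^{\varphi,N}_s, \nabla\psi(s)e^{\varphi(s)}\rangle\, \phi\!\left(\int_0^{s-}h(s-u)\, d\overline{Z}^{\varphi,N}_u\right)ds + M^{N,\psi}_t,
\end{equation*}
where $M^{N,\psi}$ is a mean-zero martingale. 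Because $\{\pi^i\}_{i=1}^N$ are independent, the $N$ contributions to $M^{N,\psi}$ are orthogonal, and using $\phi(x)\leq \phi(0)+\alpha x$ together with $\|\nabla\psi\|_\infty<\infty$ we obtain $\mathbb E[(M^{N,\psi}_t)^2]\leq \frac{C_\psi}{N}\int_0^T e^{\varphi^*(s)}(1+\mathbb E[\overline{Z}^{\varphi,N}_T])\, ds$, which is $O(1/N)$ in view of Step 1 and hypothesis \eqref{N-dim-perturbation-Hawkes-process-varphi-exp-c}. Hence $\sup_{t\leq T}|M^{N,\psi}_t|\to 0$ in probability.

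\emph{Step 3 (Limit identification and uniqueness).} Let $\mu^*$ be any weak subsequential limit and, via Skorokhod's representation, work on a probability space where the convergence is almost sure. The integrand $\langle L^{\varphi,N}_s,\nabla\psi(s)e^{\varphi(s)}\rangle$ is dominated by $\|\nabla\psi\|_\infty e^{\varphi^*(s)}\in L^1[0,T]$; the differentiability of $h$ in (A.1) allows integration by parts in $\int_0^s h(s-u)\, d\overline{Z}^{\varphi,N}_u$ to express it as a Riemann integral against $\overline{Z}^{\varphi,N}$, which converges to $\int_0^s h(s-u)\, d\bar\mu^*_u$ by weak convergence of $L^{\varphi,N}_\cdot$ and the moment bound; combined with the continuity of $\phi$, dominated convergence passes the limit through the whole displayed identity of Step 2. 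Thus $\mu^*\in M[0,T]$ satisfies \eqref{perturbation-mean-fields-equation}, so Theorem \ref{N-dim-perturbation-Hawkes-process-exp-Thm-2}(2) forces $\mu^*=\mu^\varphi$. Uniqueness of the subsequential limit gives $L^{\varphi,N}\Rightarrow \mu^\varphi$ and, since the limit is deterministic, convergence in probability, which is precisely \eqref{perturbation-mean-fields-LLN-thm-1-eq-1}.

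\emph{Main obstacle.} The central difficulty is the unboundedness of $e^{\varphi}$, for which only the time-integral condition \eqref{N-dim-perturbation-Hawkes-process-varphi-exp-c} is available; this forbids any direct Gronwall-type pathwise coupling of $L^{\varphi,N}$ to $\mu^\varphi$ and complicates the control of $M^{N,\psi}$ when $e^{\varphi^*}$ fails to be square integrable. The cleanest route is to first apply Steps 1--3 to the truncated fields $L^{\varphi_n,N}$, for which the bounded rates of Lemma \ref{N-dim-perturbation-Hawkes-process-bounded-Thm} give all required $L^2$ estimates, and then to interchange the limits $n\to\infty$ and $N\to\infty$ using a diagonal extraction, the coupling of $Z^{\varphi_n,N,i}$ to $Z^{\varphi,N,i}$ on the events $\{\tau_{2^k}>T\}$ introduced in the proof of Lemma \ref{N-dim-perturbation-Hawkes-process-bounded-Thm}, and the convergence $\mu^{\varphi_n}\to\mu^\varphi$ in $M[0,T]$ coming from Theorem \ref{N-dim-perturbation-Hawkes-process-exp-Thm-2}(2).
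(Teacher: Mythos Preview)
Your Steps 1--3 constitute exactly the paper's argument: It\^o's formula gives the semimartingale decomposition of $\langle L^{\varphi,N}_t,\psi\rangle$, the martingale part vanishes by a quadratic-variation estimate of order $1/N$, tightness plus uniqueness of the limit of \eqref{perturbation-mean-fields-equation} (Theorem~\ref{N-dim-perturbation-Hawkes-process-exp-Thm-2}(2)) identifies every subsequential limit as $\mu^\varphi$. The paper's proof is essentially your Steps 1--3 carried out with time-independent test functions $\psi\in C^{lip}(\mathbb N)$; the Gronwall bound for $\mathbb E[\overline Z^{\varphi,N}_t]$ and the integration-by-parts in $\int_0^s h(s-u)\,d\overline Z^{\varphi,N}_u$ that you mention are both made explicit there.

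Your ``Main obstacle'' paragraph, however, overstates the difficulty and proposes an unnecessary detour. The quadratic variation of $M^{N,\psi}$ contains $e^{\varphi}$ only to the \emph{first} power (the jump sizes $\nabla\psi$ are bounded, and the intensity is $e^{\varphi}\phi(\cdots)$), so $\mathbb E[\langle M^{N,\psi}\rangle_T]$ is controlled by $\frac{C}{N}\int_0^T e^{\varphi^*(s)}(1+\mathbb E[\overline Z^{\varphi,N}_s])\,ds$, which is finite under \eqref{N-dim-perturbation-Hawkes-process-varphi-exp-c} alone---no square integrability of $e^{\varphi^*}$ is required. The paper therefore works directly with the unbounded $\varphi$ and never passes through the truncated fields $\varphi_n$ or any diagonal extraction; your proposed interchange of the $n\to\infty$ and $N\to\infty$ limits, and the coupling on $\{\tau_{2^k}>T\}$, are not needed for this theorem.
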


\begin{proof}
For any $\psi \in C^{lip}(\mathbb N  )$, by It\^{o}'s formula,
\begin{align*}
\left\<L^{\varphi, N}_t,\psi\right\>
&= \frac{1}{N}\sum_{i=1}^N\psi\left(Z^{\varphi, N}_t\right)
\\
&=\psi(0)+
\int_0^{t}\left\<L_s^{\varphi,N}, \nabla \psi\right\>  e^{\varphi(s)} \phi\left(\int_0^{s-} 
h(s-u)d \overline{Z}^{\varphi, N}_u\right)  ds+M_t^{\varphi,N},
\end{align*}
where we recall from \eqref{N-dim-perturbation-Hawkes-process-mean-eq} that
\begin{equation} \label{N-dim-perturbation-mean-process-eq}
\overline{Z}^{\varphi, N}_t=\frac{1}{N}\sum_{i=1}^NZ^{\varphi, N,i}_t,
\end{equation}
and $M_t^{\varphi,N}$ is a martingale, given by
\begin{equation*}
M_t^{\varphi,N}:=\frac{1}{N}\sum_{i=1}^N\int_0^{t-} \int_0^\infty\nabla\psi\left(Z^{\varphi, N,i}_{s-}\right)  
I_{ \left\{z \leq e^{\varphi\left(s,Z^{\varphi, N,i}_{s-}\right)} \phi\left(\int_0^{s-} 
h(s-u)d \overline{Z}^{\varphi,N}_u\right)\right\}} ({\pi}^{i}(ds\,dz)-dsdz).
\end{equation*}
Then, there exists a constant $c\in(0,\infty) $ such that
$$
\begin{aligned}
\left\<M^{\varphi,N}\right\>_t\leq &\frac{c}{N} \int_0^{t}e^{\varphi^*(s)}\phi\left(\int_0^{s-} 
h(s-u)d \overline{Z}^{\varphi,N}_u\right)  ds.
\end{aligned}
$$
Thus by Doob's martingale inequality, for any $\epsilon>0$,
\begin{equation}
\mathbb{P}\left(\sup_{0\leq t\leq T}\left|M_{t}^{\varphi,N}\right|\geq\epsilon\right)
\leq
\frac{4\mathbb{E}\left[\left|M_{T}^{\varphi,N}\right|^{2}\right]}{\epsilon^{2}}
=\frac{4\mathbb{E}\left[\left\langle M^{\varphi,N}\right\rangle_{T}\right]}{\epsilon^{2}}.
\end{equation}
Moreover, we can compute that
\begin{align*}
\mathbb{E}\left[\left\langle M^{\varphi,N}\right\rangle_{T}\right]
&\leq
\frac{c}{N} \int_0^{t}e^{\varphi^*(s)}
\left(\phi(0)+\alpha\mathbb{E}\left|\int_0^{s-} 
h(s-u)d\overline{Z}^{\varphi,N}_u\right|\right)ds
\\
&\leq
\frac{c}{N} \int_0^{t}e^{\varphi^*(s)}ds
\cdot\left(\phi(0)+\alpha\left(h(0)+\int_{0}^{T}|h'(u)|du\right)\mathbb{E}\left[\overline{Z}^{\varphi,N}_T\right]\right),
\end{align*}
and we recall that $\overline{Z}_{t}^{\varphi,N}=\frac{1}{N}\sum_{i=1}^{N}Z_{t}^{\varphi,N,i}$
where
\begin{align*}
\mathbb{E}\left[\overline{Z}_{t}^{\varphi,N}\right]
&=\mathbb{E}\left[Z_{t}^{\varphi,N,i}\right]
\\
&=\int_{0}^{t}\mathbb{E}\left[e^{\varphi\left(s,Z_{s-}^{\varphi,N,i}\right)}
\phi\left(\int_{0}^{s-}h(s-u)d\overline{Z}_{u}^{\varphi,N}\right)\right]ds
\\
&\leq\int_{0}^{t}e^{\varphi^{\ast}(s)}
\left(\phi(0)+
\alpha\mathbb{E}\left|\int_{0}^{s-}h(s-u)d\overline{Z}_{u}^{\varphi,N}\right|\right)ds
\\
&
\leq\int_{0}^{t}e^{\varphi^{\ast}(s)}
\left(\phi(0)+
\alpha\left(h(0)+\int_{0}^{s}|h'(u)|du\right)\mathbb{E}\left[\overline{Z}_{s}^{\varphi,N}\right]\right)ds,
\end{align*}
and by Gronwall's inequality, we get
\begin{equation*}
\mathbb{E}\left[\overline{Z}_{t}^{\varphi,N}\right]
\leq\phi(0)\int_{0}^{t}e^{\varphi^{\ast}(s)}ds
\cdot 
e^{\alpha((h(0)+\int_{0}^{T}|h'(u)|du)t}.
\end{equation*}
Hence, we have proved that
\begin{equation*}
\sup_{0\leq t\leq T}\left|M_{t}^{\varphi,N}\right|\rightarrow 0,
\qquad\mbox{in probability  as } N\to\infty.
\end{equation*}
For any $\epsilon>0$
$$
\begin{aligned}
&\lim_{\delta\to 0}\limsup_{N\to\infty}\mathbb P\left(\sup_{|s-t|\leq \delta, 0\leq s,t\leq T}\left|\left\<L^{\varphi, N}_t,\psi\right\>-\left\<L^{\varphi, N}_s,\psi\right\>\right|>\epsilon \right)\\
\leq &\lim_{\delta\to 0}\limsup_{N\to\infty}\mathbb P\left(\sup_{|s-t|\leq \delta, 0\leq s,t\leq T}\|\psi\|_{lip} \int_s^{t} e^{\varphi^*(v)}\phi\left(\int_0^{v-} 
h(v-u)d \overline{Z}^{\varphi, N}_u\right)  dv>\epsilon/2 \right)=0.
\end{aligned}
$$
Therefore, $\left\{L^{\varphi, N}\right\}$ is tight. Let $\mu$ be any limit point. Then $\mu$ satisfies \eqref{perturbation-mean-fields-equation}.  By Theorem~\ref{N-dim-perturbation-Hawkes-process-exp-Thm-2} ,  $\mu=\mu^{\varphi}$. The proof is complete.
\end{proof}

\section{The rate function of large deviations}\label{sec:rate:function}

In this section, we discuss and analyze the rate function of the large deviations.
Given  $\mu\in M[0,T]$,  for any  finite measurable function $\varphi $ with 
$$
\int_0^T\int_0^\infty e^{\varphi(t,x)}\phi\left(\int_0^{t} h(t-s)d\bar{\mu}_s\right)\mu_t(dx)dt<\infty, 
$$
define
\begin{equation}\label{l-normal-def-eq-0}
\begin{aligned}
\ell_\mu(\varphi) :=& \int_0^T\sum_{n=0}^\infty \partial_tf_\mu(t,n)\varphi(t,n)dt
=-\int_0^T\sum_{n=0}^\infty\sum_{k=n}^{\infty}\varphi(t,k)\partial_tF_\mu(t,n)dt.
\end{aligned}
\end{equation}
and 
\begin{equation}\label{J-funtion-def-eq-1}
\begin{aligned}
J_\mu(\varphi):=&\ell_\mu(\varphi)-\int_0^T\int_0^\infty \left(e^{\varphi(t,x)}-1 \right)\phi\left(\int_0^{t} h(t-s)d\bar{\mu}_s\right)\mu_t(dx)dt.
\end{aligned}
\end{equation}

Then when $\varphi\in C^{1,b}([0,T]\times\mathbb N )$,  
\begin{equation}\label{l-normal-def-eq-1}
\begin{aligned}
\ell_\mu(\varphi)=&\langle \mu(T),\varphi(T)\rangle-\varphi(0,0) -\int_0^T\left\<\mu(t), \partial _t\varphi(t) \right\> dt.
\end{aligned}
\end{equation}

Define
\begin{equation}\label{ldp-rate-funtion-def-eq-1}
\begin{aligned}
I(\mu):=\sup\left\{J_\mu(\varphi);~ \varphi\in C^{1,b}([0,T]\times\mathbb N )   \right\}.
\end{aligned}
\end{equation}

\begin{lem}\label{ldp-rate-funtion-Property-1-lem-1}
Suppose (A.1), (A.2)  and   (A.3) hold. Let  $\mu\in M[0,T]$ satisfy $ 
I(\mu) <\infty,
$

(1).  For each $x\in\mathbb N$, $t\mapsto 1-F_\mu(t,x)$ is absolutely continuous w.r.t. $f_\mu(t,x)dt $.  
In particular,  for each $x\in\mathbb N$,  $t\mapsto  \mu(\{x\}) $ is continuous.

(2). There exists a  function $\varphi=\varphi_\mu\in B([0,T]\times\mathbb N )$   such that
  for any $\psi\in C_K([0,T]\times\mathbb N  )$, i.e. the space with compact support in $x$,
\begin{equation} \label{general-perturbation-mean-fields-equation}
\begin{aligned}
\<\mu_t,\psi(t)\> = &\psi(0,0)+\int_0^t   \left\<\mu_s,\partial_s \psi(s)\right\> ds\\
&+\int_0^t   \left\<\mu_s,\nabla \psi(s)e^{\varphi(s)}\right\> \phi\left( \int_0^s h(s-u)d\bar{\mu}_u\right)ds.
\end{aligned}
\end{equation}
Moreover, $\varphi=\varphi_\mu$ is determined uniquely by
\begin{equation} \label{general-perturbation-mean-fields-equation-sol}
{f_\mu(t,x)\phi\left( \int_0^th(t-u)d\bar{\mu}_u\right)}e^{\varphi_\mu(t,x)}= -\partial_tF_\mu(t,x),~x\in\mathbb N, t\in[0,T], 
\end{equation}
where we set $\varphi_\mu(t,x)=0$ if $f_\mu(t,x)=0$. 
The following formula also holds
\begin{equation}\label{ldp-rate-funtion-representation-eq-0}
\int_0^t \int_0^\infty e^{\varphi_\mu(s,x)}
\phi\left( \int_0^s h(s-u)d\bar{\mu}_u\right)\mu_s(dx)ds= \int_0^\infty x\mu_t(dx).
\end{equation}
\end{lem}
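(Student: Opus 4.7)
The overall strategy is convex duality. The rate function $I(\mu)$ is, by its very definition, the Legendre-type supremum $\sup_\varphi[\ell_\mu(\varphi)-\int(e^\varphi-1)d\nu]$, where $\nu(ds,dx):=\phi(\int_0^s h(s-u)d\bar\mu_u)\mu_s(dx)ds$ is a finite positive measure on $[0,T]\times\mathbb{N}$. For a nonnegative measure $\lambda\ll\nu$ with density $g$, the classical Donsker–Varadhan identity gives $\sup_\varphi[\int\varphi\,d\lambda-\int(e^\varphi-1)d\nu]=\int(g\log g-g+1)d\nu$. Hence $I(\mu)<\infty$ should both force $\ell_\mu$, viewed as a measure, to be absolutely continuous with respect to $\nu$, and produce a density which we will call $e^{\varphi_\mu}$. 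The plan is to make this heuristic rigorous while staying inside the test class $C^{1,b}([0,T]\times\mathbb{N})$.

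For part~(1), I would test the inequality $\ell_\mu(\varphi)\leq I(\mu)+\int(e^\varphi-1)d\nu$ against $\varphi(t,y)=c\,\chi(t)\,\rho_N(y)$, where $\chi\in C^1([0,T])$ approximates $\mathbf{1}_E$ for an arbitrary Borel $E\subset[0,T]$ and $\rho_N$ is a smooth cutoff of $\mathbf{1}_{\{y\geq x+1\}}$ so that $\varphi\in C^{1,b}$. Using the series expression $\ell_\mu(\varphi)=\int_0^T\sum_n\partial_tf_\mu(t,n)\varphi(t,n)dt$ together with summation by parts in $n$, the bound becomes, in the limit $N\to\infty$ and $\chi\to\mathbf{1}_E$,
\[
c\int_E(-\partial_tF_\mu(t,x))\,dt-(e^c-1)\int_E\phi(1-F_\mu(t,x))\,dt\leq I(\mu).
\]
Optimising over $c\in\mathbb{R}$ yields a relative entropy bound between the set functions $A(E):=\int_E(-\partial_tF_\mu(t,x))dt$ and $B(E):=\int_E\phi(1-F_\mu(t,x))dt$. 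Repeating with $\mathbf{1}_{\{y\geq x\}}$ in place of $\mathbf{1}_{\{y\geq x+1\}}$ and subtracting shows that $-\partial_tF_\mu(t,x)dt\ll\phi f_\mu(t,x)dt$, which by (A.3) (where $\phi$ is bounded away from zero) is equivalent to $-\partial_tF_\mu(t,x)dt\ll f_\mu(t,x)dt$. Absolute continuity with respect to Lebesgue measure $dt$ then automatically gives continuity of $t\mapsto F_\mu(t,x)$, and therefore of $t\mapsto\mu_t(\{x\})$.

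For part~(2), with the absolute continuity in hand, define $\varphi_\mu(t,x)$ by the formula \eqref{general-perturbation-mean-fields-equation-sol} (and $\varphi_\mu=0$ on $\{f_\mu=0\}$); this makes $\varphi_\mu$ measurable and yields the uniqueness claim tautologically. To derive \eqref{general-perturbation-mean-fields-equation} for $\psi\in C_K([0,T]\times\mathbb{N})$ that is $C^1$ in $t$, apply the truncated integration-by-parts identity
\[
\langle\mu_t,\psi(t)\rangle-\psi(0,0)-\int_0^t\langle\mu_s,\partial_s\psi(s)\rangle\,ds=\int_0^t\sum_n\partial_sf_\mu(s,n)\psi(s,n)\,ds,
\]
substitute $-\partial_sF_\mu(s,x)=e^{\varphi_\mu(s,x)}f_\mu(s,x)\phi(\cdots)$ from \eqref{general-perturbation-mean-fields-equation-sol}, and perform summation by parts in $n$ using the paper's $\nabla$-convention. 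For compactly supported $\psi$ the boundary terms vanish and the right-hand side becomes $\int_0^t\langle\mu_s,\nabla\psi(s)e^{\varphi_\mu(s)}\rangle\phi(\cdots)ds$, as required. The formula \eqref{ldp-rate-funtion-representation-eq-0} follows by summing \eqref{general-perturbation-mean-fields-equation-sol} in $x$ and using $\bar\mu_t=\sum_{x\geq 0}(1-F_\mu(t,x))$ together with $\bar\mu_0=0$: differentiation yields $\partial_t\bar\mu_t=\int e^{\varphi_\mu(t,x)}\phi(\cdots)\mu_t(dx)$, and integration in $t$ closes the argument.

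The main obstacle is the test-function approximation in Step~1: the indicators $\mathbf{1}_E$ in time and $\mathbf{1}_{\{y\geq k\}}$ in state are not in $C^{1,b}$, so the variational inequality must be extended by smoothing. Mollification in $t$ is routine once one knows $t\mapsto F_\mu(t,x)$ is at least right-continuous, which follows from $\mu\in D([0,T],M_1(\mathbb{N}))$ combined with monotonicity of $F_\mu$ in $t$; truncation in the state variable uses the finite first-moment bound $\bar\mu_T<\infty$ to make tail contributions to $\int(e^\varphi-1)d\nu$ uniformly negligible as $c$ ranges over a compact interval. A further subtlety, relevant when $\varphi_\mu$ is later plugged into the perturbed SDEs of Section~\ref{sec:LLN}, is that the $\varphi_\mu$ produced here is only measurable; one will ultimately handle this by exhaustion through $((-n)\vee\varphi_\mu)\wedge n$ combined with condition \eqref{N-dim-perturbation-Hawkes-process-varphi-exp-c}.
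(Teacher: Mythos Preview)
Your strategy is the same convex–duality route the paper takes: test the inequality $J_\mu(\varphi)\le I(\mu)$ against well–chosen $\varphi$ to force absolute continuity, then define $\varphi_\mu$ by \eqref{general-perturbation-mean-fields-equation-sol} and recover the weak equation by summation by parts. Part~(2) of your proposal matches the paper's argument essentially verbatim, and your explicit discussion of mollifying indicators into $C^{1,b}$ is in fact more careful than the paper, which simply plugs in $\lambda I_A(t)I_{\{n\}}(x)$ without comment.

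There is, however, a gap in your part~(1). Testing $\varphi=c\,\mathbf 1_E(t)\mathbf 1_{\{y\ge x+1\}}$ gives
\[
c\!\int_E(-\partial_tF_\mu(t,x))\,dt-(e^c-1)\!\int_E\phi\,(1-F_\mu(t,x))\,dt\le I(\mu),
\]
so optimising in $c$ yields only $-\partial_tF_\mu(t,x)\,dt\ll(1-F_\mu(t,x))\,dt$, not $\ll f_\mu(t,x)\,dt$. ``Repeating with $\mathbf 1_{\{y\ge x\}}$ and subtracting'' does not fix this: relative–entropy bounds do not subtract, and even if $\int_E f_\mu(t,x)\,dt=0$ the two penalty terms $\int_E\phi(1-F_\mu(t,x))\,dt$ and $\int_E\phi(1-F_\mu(t,x-1))\,dt$ coincide rather than cancel, so no information about $\int_E(-\partial_tF_\mu(t,x))\,dt$ is extracted. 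The paper avoids this by testing against the \emph{singleton} $\varphi_\lambda(t,y)=\lambda I_A(t)I_{\{n\}}(y)$ for a set $A$ with $\int_A f_\mu(t,n)\,dt=0$: the penalty $(e^\lambda-1)\int_A\phi f_\mu(t,n)\,dt$ vanishes identically, and letting $\lambda\to\infty$ forces $-\int_A\partial_tF_\mu(t,n)\,dt=0$ in one stroke. Replacing your half–space test by this singleton test (with the same mollification you already describe) closes the gap.
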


\begin{proof}(1). Assume that there exists $n\in\mathbb N$, so that 
$-\partial_tF_\mu(t,n)dt$ is not absolutely continuous w.r.t. $f_\mu(t,n)dt $.  Choose measurable subset $A\subset [0,T]$ such that  $\int_A f_\mu(t,n)dt=0$ and $\delta:=- \int_A \partial_t  F_\mu(t,n)dt>0$.  
Take $\varphi_\lambda(t,x)=\lambda I_A(t) I_{\{n\}}(x)$, $\lambda>0$. Then as $\lambda\to \infty$,
$$
I(\mu)\geq J_\mu(\varphi_\lambda)=\lambda \delta \to \infty.
$$
Thus, $I(\mu)=\infty$.

(2).  
 By \eqref{general-perturbation-mean-fields-equation}, we have
 \begin{equation*} \label{general-perturbation-mean-fields-equation-eq-2}
 \left\{
 \begin{aligned}
f_\mu(t,0)=&1-\int_0^tf_\mu(s,0) e^{\varphi(s,0)}  \phi\left( \int_0^sh(s-u)d\bar{\mu}_u\right)ds, \\
f_\mu(t,n)= &\int_0^t   \left(f_\mu(s,n-1)e^{\varphi(s,n-1)}-f_\mu(s,n)e^{\varphi(s,n)}\right)
\phi\left( \int_0^sh(s-u)d\bar{\mu}_u\right)ds,~n\geq 1.
\end{aligned}
\right.
\end{equation*}

Then  
 \begin{equation*} \label{ldp-rate-funtion-Property-1-eq-3}
 \left\{
 \begin{aligned}
\partial_tf_\mu(t,0)=& -f_\mu(t,0)e^{\varphi(t,0)}  \phi\left( \int_0^th(t-u)d\bar{\mu}_u\right), \\
\partial_tf_\mu(t,n) = & \left(f_\mu(t,n-1)e^{\varphi(t,n-1)}-f_\mu(t,n)e^{\varphi(t,n)}\right)\phi\left( \int_0^th(t-u)d\bar{\mu}_u\right),~n\geq 1.
\end{aligned}
\right.
\end{equation*}
 Therefore, by summing over $n$,
\eqref{general-perturbation-mean-fields-equation-sol} holds.
We recall that $F_{\mu}(t,x)=\mu_{t}([0,x])$ and $f_{\mu}(t,x)=\mu_{t}(\{x\})$.
By summing over $x$ in \eqref{general-perturbation-mean-fields-equation-sol}, we get
\begin{equation*}
\int_{0}^{\infty}\phi\left( \int_0^th(t-u)d\bar{\mu}_u\right)e^{\varphi_\mu(t,x)}\mu_{t}(dx)
=\partial_{t}\int_{0}^{\infty}x\mu_{t}(dx).
\end{equation*}
By integrating from $0$ to $t$, we get \eqref{ldp-rate-funtion-representation-eq-0}.

Finally, 
\eqref{general-perturbation-mean-fields-equation} can be obtained by  \eqref{general-perturbation-mean-fields-equation-sol}.
This completes the proof.
\end{proof}

Define
\begin{equation*}
G_\mu(t,x):=\frac{-\partial_tF_\mu(t,x)}{f_\mu(t,x)\phi\left( \int_0^th(t-u)d\bar{\mu}_u\right)},
\end{equation*}
where  we set $0/0:=1$. Then
\begin{equation*}\label{jensen}
\int_0^T\int_0^\infty G_\mu(t,x)\phi\left( \int_0^th(t-u)d\bar{\mu}_u\right)\mu_t(dx)dt=\int_0^\infty x\mu_T(dx).
\end{equation*}

\begin{thm}\label{ldp-rate-funtion-Property-1}
Suppose (A.1), (A.2)  and   (A.3) hold.  
Let  $\mu\in M[0,T]$. Then $I(\mu)<\infty$ if and only if  
\begin{equation}\label{ldp-rate-funtion-representation-eq-2}
\int_0^T\int_{0}^\infty \left( {G_\mu(t,x)} \log  {G_\mu(t,x)} \right)\phi\left( \int_0^th(t-u)d\bar{\mu}_u\right)\mu_t(dx)dt<\infty.
\end{equation}
Furthermore,  in this case,
\begin{equation}\label{ldp-rate-funtion-representation-eq-3}
\begin{aligned}
I(\mu) =&\int_0^T\int_{0}^\infty \bigg( {G_\mu(t,x)} \log {G_\mu(t,x)} -{G_\mu(t,x)}+1\bigg)  \phi\left(\int_0^{t} h(t-s)d\bar{\mu}_s\right)\mu_t(dx)dt.
\end{aligned}
\end{equation}
That is,
\begin{equation}\label{ldp-rate-funtion-representation-eq-1}
\begin{aligned}
I(\mu) =& \int_0^T\int_{0}^\infty \left(\varphi_\mu(t,x)e^{\varphi_\mu(t,x)}-e^{\varphi_\mu(t,x)}+1\right)\phi\left(\int_0^{t} h(t-s)d\bar{\mu}_s\right)\mu_t(dx)dt,
\end{aligned}
\end{equation}
where  $\varphi=\varphi_\mu\in  B([0,T]\times\mathbb N  )$  is the solution of \eqref{general-perturbation-mean-fields-equation}.  
 \end{thm}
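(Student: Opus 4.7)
The plan is to prove the ``iff'' and the explicit formula jointly, via a Fenchel/Young duality argument saturated at $\varphi_\mu=\log G_\mu$. Throughout, let $\Psi(a):=a\log a-a+1$ with the convention $0\log 0:=0$, so that $\Psi$ is the Legendre conjugate of $v\mapsto e^v-1$ and one has the pointwise Young inequality
\[
av-(e^v-1)\leq \Psi(a),\qquad a\geq 0,\ v\in\mathbb R,
\]
with equality iff $v=\log a$.

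\textbf{Upper bound.} I would first show $J_\mu(\varphi)\leq \int_0^T\int_0^\infty\Psi(G_\mu)\phi\mu_t(dx)dt$ for every $\varphi\in C^{1,b}$. The key identification is that, under $I(\mu)<\infty$, Lemma~\ref{ldp-rate-funtion-Property-1-lem-1}(1) forces the absolute continuity $\partial_tF_\mu\ll f_\mu$ needed for $G_\mu$ to be well-defined, while Lemma~\ref{ldp-rate-funtion-Property-1-lem-1}(2) together with \eqref{general-perturbation-mean-fields-equation-sol} lets $\ell_\mu(\varphi)$ be rewritten as $\int\int\varphi(t,x)G_\mu(t,x)\phi\mu_t(dx)dt$. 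Applying the Young inequality pointwise with $a=G_\mu$ and $v=\varphi$ and integrating against $\phi\mu_t(dx)dt$ yields the claimed upper bound on $J_\mu(\varphi)$; taking the supremum over $\varphi\in C^{1,b}$ gives $I(\mu)\leq \int_0^T\int_0^\infty\Psi(G_\mu)\phi\mu_t(dx)dt$. Combined with \eqref{ldp-rate-funtion-representation-eq-0}, this also establishes the direction ``\eqref{ldp-rate-funtion-representation-eq-2} $\Rightarrow I(\mu)<\infty$'' of the iff, because $\Psi(G_\mu)=G_\mu\log G_\mu-(G_\mu-1)$ differs from the integrand in \eqref{ldp-rate-funtion-representation-eq-2} by the integrable quantity $G_\mu-1$.

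\textbf{Matching lower bound.} The Young inequality is saturated at $\varphi_\mu=\log G_\mu$, but this function typically lies outside $C^{1,b}$ (unbounded below where $G_\mu$ is small, merely measurable in $t$). The plan is to regularize: for $k\in\mathbb N$, $\varepsilon>0$, set
\[
\varphi_\mu^{k,\varepsilon}:=\rho_\varepsilon\ast_t\bigl[(\log G_\mu\wedge k)\vee(-k)\bigr]\in C^{1,b},
\]
with $\rho_\varepsilon$ a standard mollifier in the time variable. Since $J_\mu(\varphi_\mu^{k,\varepsilon})\leq I(\mu)$ for each $k,\varepsilon$, letting $\varepsilon\downarrow 0$ then $k\uparrow\infty$ and passing to the limit under dominated convergence should deliver
\[
J_\mu(\varphi_\mu^{k,\varepsilon})\ \longrightarrow\ \int_0^T\!\!\int_0^\infty\bigl(\varphi_\mu e^{\varphi_\mu}-e^{\varphi_\mu}+1\bigr)\phi\mu_t(dx)dt\ =\ \int_0^T\!\!\int_0^\infty\Psi(G_\mu)\phi\mu_t(dx)dt,
\]
yielding the reverse inequality $I(\mu)\geq \int\Psi(G_\mu)\phi\mu_t(dx)dt$, both formulas \eqref{ldp-rate-funtion-representation-eq-1} and \eqref{ldp-rate-funtion-representation-eq-3}, and the remaining direction of the iff.

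\textbf{Main obstacle.} Justifying the preceding passage to the limit is the crux of the argument. Two integrability inputs power it. For the linear term $\ell_\mu(\varphi_\mu^{k,\varepsilon})=\int\int\varphi_\mu^{k,\varepsilon}G_\mu\phi\mu_t(dx)dt$, one has the pointwise bound $|\varphi_\mu^{k,\varepsilon}G_\mu|\leq G_\mu|\log G_\mu|$ where the truncation is inactive and $|\varphi_\mu^{k,\varepsilon}G_\mu|\leq k\,G_\mu$ elsewhere; condition \eqref{ldp-rate-funtion-representation-eq-2} together with $\int G_\mu\phi\mu_t(dx)dt=\bar\mu_T<\infty$ (from \eqref{ldp-rate-funtion-representation-eq-0}) supplies the majorant and dominated convergence applies. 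For the exponential term $\int(e^{\varphi_\mu^{k,\varepsilon}}-1)\phi\mu_t(dx)dt$, the integrand converges pointwise to $G_\mu-1$ and is pointwise bounded by $G_\mu\vee e^k+1$; splitting on $\{G_\mu\leq e^k\}$ and its complement handles the $e^k$ contribution, while the integrability of $G_\mu$ against $\phi\mu_t(dx)dt$ takes care of the tail. Summing the two limits gives $\int\Psi(G_\mu)\phi\mu_t(dx)dt$, as required.
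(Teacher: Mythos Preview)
Your overall strategy matches the paper's: the upper bound via the pointwise Young inequality $av-(e^v-1)\leq\Psi(a)$ is exactly what the paper does, and the lower bound by approximating the optimizer $\varphi_\mu=\log G_\mu$ by elements of $C^{1,b}$ is also the paper's route. The difference lies in how the limit is justified for the lower bound, and there your version has a genuine gap.

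The lower bound is precisely what delivers the implication ``$I(\mu)<\infty\Rightarrow$ \eqref{ldp-rate-funtion-representation-eq-2}'': one must show $I(\mu)\geq\int\Psi(G_\mu)\,\phi\,\mu_t(dx)dt$ \emph{without} knowing in advance that $\int G_\mu|\log G_\mu|\,\phi\,\mu_t(dx)dt<\infty$. But your dominated-convergence step explicitly invokes \eqref{ldp-rate-funtion-representation-eq-2} as the majorant for the linear term --- this is circular. There is also a secondary problem: after time mollification the bound ``$|\varphi_\mu^{k,\varepsilon}G_\mu|\leq G_\mu|\log G_\mu|$ where the truncation is inactive'' need not hold, since $\rho_\varepsilon*_t$ averages $\log G_\mu(\cdot,x)$ over nearby times and is not controlled pointwise by $|\log G_\mu(t,x)|$.

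The paper sidesteps both issues by using Fatou's lemma instead of dominated convergence. Its approximating sequence is built so that $|\varphi_n|\leq|\varphi_\mu|$ pointwise (writing $\varphi_n=\varphi_n^{(2)}-\varphi_n^{(1)}$ with $0\leq\varphi_n^{(1)}\leq\varphi_\mu^-$ and $0\leq\varphi_n^{(2)}\leq\varphi_\mu^+$), and then the integrand $(\varphi_n e^{\varphi_\mu}-e^{\varphi_n}+1)\phi f_\mu$ admits an integrable lower bound independent of $n$: on $\{\varphi_\mu\geq 0\}$ it is at least $(1-G_\mu)\phi f_\mu$, and on $\{\varphi_\mu<0\}$ it is at least $-e^{-1}\phi f_\mu$. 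Fatou then yields $\liminf_n J_\mu(\varphi_n)\geq\int\Psi(G_\mu)$, whether or not the right-hand side is finite --- which is exactly what is needed. Your truncation $\varphi_\mu^k=(\log G_\mu\wedge k)\vee(-k)$ can be salvaged in the same spirit: one checks that $(\varphi_\mu^k G_\mu-e^{\varphi_\mu^k}+1)\geq 0$ for all $k\geq 1$, so that first sending $\varepsilon\downarrow 0$ (dominated convergence with a $k$-dependent bound) and then $k\uparrow\infty$ via Fatou closes the argument without circularity.
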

 
\begin{proof}  For any given $t$ and $x$, let us define the function:
\begin{equation*}
F(y):=y(-\partial_{t}F_{\mu}(t,x))
-(e^{y}-1)\phi\left(\int_{0}^{t}h(t-u)d\bar{\mu}_u\right)\mu_{t}(\{x\}),
\end{equation*}
where $-\infty<y<\infty$. Note that  when $\partial_t F_t(t,x)\not=0$ and $\mu_t(\{x\})\not=0$, the function $F(y)$ is maximized at
\begin{equation*}
y=\log\left(\frac{(-\partial_{t}F_{\mu}(t,x))}{\phi\left(\int_{0}^{t}h(t-u)d\bar{\mu}_u\right)\mu_{t}(\{x\})}\right)
=\log\left(G_{\mu}(t,x)\right).
\end{equation*}
Therefore, for any $\varphi(t,x)\in C^{1,lip}([0,T]\times\mathbb N)$,  by definition, we have
\begin{align*}
J_{\mu}(\varphi)
&=\int_{0}^{T}\sum_{x=0}^{\infty}\varphi(t,x)(-\partial_{t}F_{\mu}(t,x))dt
\\
&\qquad\qquad\qquad
-\int_{0}^{T}\sum_{x=0}^{\infty}(e^{\varphi(t,x)}-1)
\phi\left(\int_{0}^{t}h(t-u)d\bar{\mu}_u\right)\mu_{t}(\{x\})dt\\
&\leq
\int_0^T\int_{0}^\infty \bigg( {G_\mu(t,x)} \log {G_\mu(t,x)} -{G_\mu(t,x)}+1\bigg)  \phi\left(\int_0^{t} h(t-s)d\bar{\mu}_s\right)\mu_t(dx)dt.
\end{align*}
Hence, if
$$
\int_0^T\int_{0}^\infty \bigg( {G_\mu(t,x)} \log {G_\mu(t,x)} -{G_\mu(t,x)}+1\bigg)  \phi\left(\int_0^{t} h(t-s)d\bar{\mu}_s\right)\mu_t(dx)dt<\infty,
$$
then  $I(\mu)<\infty$.
 
 Inversely,  if  $I(\mu)<\infty$, then, $G_\mu(t,x)=e^{\varphi_\mu(t,x)}$, and  by \eqref{general-perturbation-mean-fields-equation-sol},
\begin{align*}
J_{\mu}(\varphi)
&=\int_{0}^{T}\sum_{x=0}^{\infty}\left(\varphi(t,x) e^{\varphi_\mu(t,x)} 
- e^{\varphi(t,x)}+1\right)
\phi\left(\int_{0}^{t}h(t-u)d\bar{\mu}_u\right)f_\mu(t,x)dt.
\end{align*}

Note that $\varphi\in C^{1,b}([0,T]\times\mathbb N)$, 
while $\varphi_{\mu}$ may not be in $C^{1,b}([0,T]\times\mathbb N)$.
But we can choose a sequence $\varphi_n\in C^{1,b}([0,T]\times\mathbb N)$ such that
\begin{equation*}
\varphi_n^{(1)}\leq \varphi^{-},~~\varphi_n^{(2)}\leq \varphi^+~~\mbox{ and }~~\varphi_n:=\varphi_n^{(2)}-\varphi_n^{(1)}
\to\varphi_{\mu},~~ ~~f_\mu(t,x)dt-a.s. 
\end{equation*}
By Fatou's lemma, we obtain
\begin{align*}
I(\mu)
&=\sup_{\varphi\in C^{1,b}([0,T\times\mathbb N)}J_{\mu}(\varphi)
\\
&\geq\liminf_{n\rightarrow\infty}J_{\mu}(\varphi_{n})
\\
&\geq
\int_0^T\int_{0}^\infty \bigg( {G_\mu(t,x)} \log {G_\mu(t,x)} -{G_\mu(t,x)}+1\bigg)  \phi\left(\int_0^{t} h(t-s)d\bar{\mu}_s\right)\mu_t(dx)dt.
\end{align*}
Therefore,  \eqref{ldp-rate-funtion-representation-eq-3} holds. The proof is complete.
\end{proof}

\begin{lem}\label{ldp-rate-funtion-Property-2}
Suppose (A.1), (A.2) and (A.3) hold.  Given $\mu\in M[0,T]$ with  $I(\mu)<\infty$.  Then
there exist functions $\varphi^{(n)}\in B([0,T]\times\mathbb N)$  and  measures $\mu^{(n)}:=\mu^{\varphi^{(n)}}\in M[0,T],~n\geq 1 $ 
satisfying the equation \eqref{perturbation-mean-fields-equation} such that 
\begin{equation*}
\varphi^{(n)}(t,x)=0~ \mbox{ and }  ~\mu_t^{(n)}(\{x\})=0~ \mbox{ for any }~x\geq n+1,
\end{equation*}
and
\begin{equation} \label{ldp-rate-funtion-Property-2-eq-1}
\mu^{(n)}\to \mu   \mbox{   and   }
\lim_{n\to\infty} I\left(\mu^{(n)}\right)=I(\mu).
\end{equation}
\end{lem}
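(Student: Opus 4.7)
The plan is to construct $\mu^{(n)}$ by capping $\mu$ at level $n$ and to read off the perturbation $\varphi^{(n)}$ from \eqref{general-perturbation-mean-fields-equation-sol}. Concretely, I would define $\mu_t^{(n)}$ as the pushforward of $\mu_t$ under $x\mapsto x\wedge n$, so that $f_{\mu^{(n)}}(t,k)=f_\mu(t,k)$ for $k\le n-1$, $f_{\mu^{(n)}}(t,n)=\bar{F}_\mu(t,n-1)$, and $f_{\mu^{(n)}}(t,k)=0$ for $k\ge n+1$. Since $F_{\mu^{(n)}}(t,x)=F_\mu(t,x)\wedge 1$ inherits its monotonicity in $t$ from $F_\mu$ and $\mu_0^{(n)}(\{0\})=1$, we have $\mu^{(n)}\in M[0,T]$. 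Define $\varphi^{(n)}$ via
\begin{equation*}
e^{\varphi^{(n)}(t,x)}:=\frac{-\partial_tF_{\mu^{(n)}}(t,x)}{f_{\mu^{(n)}}(t,x)\,\phi\bigl(\int_0^t h(t-s)d\bar{\mu}^{(n)}_s\bigr)};
\end{equation*}
writing $c_n(t):=\phi\bigl(\int_0^t h(t-s)d\bar{\mu}_s\bigr)/\phi\bigl(\int_0^t h(t-s)d\bar{\mu}^{(n)}_s\bigr)$, this yields $\varphi^{(n)}(t,x)=\varphi_\mu(t,x)+\log c_n(t)$ for $x\le n-1$, $\varphi^{(n)}(t,n)=-\infty$, and $\varphi^{(n)}(t,x)=0$ for $x\ge n+1$. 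Telescoping this identity into the ODE system for the $f_{\mu^{(n)}}(t,k)$, exactly as in the proof of Lemma \ref{ldp-rate-funtion-Property-1-lem-1}(2), verifies that $\mu^{(n)}$ satisfies \eqref{perturbation-mean-fields-equation} with this $\varphi^{(n)}$.

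For the convergence $\mu^{(n)}\to\mu$ in $M[0,T]$, the canonical coupling that collapses the mass of $\mu_t$ on $\{n+1,n+2,\ldots\}$ onto $n$ gives, after Abel summation,
\begin{equation*}
W_1(\mu^{(n)}_t,\mu_t)\le \sum_{k\ge n+1}(k-n)f_\mu(t,k)=\sum_{j\ge n}\bar{F}_\mu(t,j).
\end{equation*}
Since $F_\mu(\cdot,j)$ is non-increasing in $t$, $\bar{F}_\mu(t,j)\le \bar{F}_\mu(T,j)$, while $\sum_{j\ge 0}\bar{F}_\mu(T,j)=\bar{\mu}_T<\infty$ by the first-moment condition built into $M_1(\mathbb N)$. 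Hence $\sup_{t\in[0,T]}W_1(\mu^{(n)}_t,\mu_t)\to 0$, and Lipschitz continuity of $\phi$ with local integrability of $h$ then yields $c_n(t)\to 1$ uniformly in $t\in[0,T]$; meanwhile, $\phi\bigl(\int_0^t h(t-s)d\bar{\mu}^{(n)}_s\bigr)$ stays uniformly bounded above by (A.2) and below by (A.3).

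To pass $I(\mu^{(n)})\to I(\mu)$, I split the integral defining $I(\mu^{(n)})$ into contributions from $x\le n-1$ and $x=n$. The $x=n$ term equals $\int_0^T\phi\bigl(\int_0^t h(t-s)d\bar{\mu}^{(n)}_s\bigr)\bar{F}_\mu(t,n-1)\,dt$ (using $G_{\mu^{(n)}}(t,n)=0$ and $0\log 0-0+1=1$), and vanishes because $\bar{F}_\mu(T,n-1)\to 0$. For $x\le n-1$, $G_{\mu^{(n)}}=c_n G_\mu$ and the identity
\begin{equation*}
G_{\mu^{(n)}}\log G_{\mu^{(n)}}-G_{\mu^{(n)}}+1=c_n\bigl(G_\mu\log G_\mu-G_\mu+1\bigr)+(1-c_n)+c_nG_\mu\log c_n
\end{equation*}
reduces the limit to three dominated-convergence arguments. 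The first is dominated by $(G_\mu\log G_\mu-G_\mu+1)\phi f_\mu$, which is integrable by $I(\mu)<\infty$; the second is bounded by $\sup_t|1-c_n(t)|$ times a constant and vanishes; for the third, the normalization $\int_0^T\sum_x G_\mu\,\phi\bigl(\int_0^t h(t-s)d\bar{\mu}_s\bigr)f_\mu(t,x)\,dt=\bar{\mu}_T$ from \eqref{ldp-rate-funtion-representation-eq-0} provides an integrable dominant and $\sup_t|c_n\log c_n|\to 0$ closes the argument.

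The main obstacle is the mean-field self-dependence in $\phi\bigl(\int_0^t h(t-s)d\bar{\mu}^{(n)}_s\bigr)$, which prevents $\mu^{(n)}$ from being a naive coordinate-wise truncation of $\mu$: the correction factor $c_n$ must be tracked throughout, and the uniformity in $t$ needed to push each of the three dominated-convergence arguments through rests on the uniform tail bound $\sum_{j\ge n}\bar{F}_\mu(T,j)\to 0$ combined with the normalization \eqref{ldp-rate-funtion-representation-eq-0}.
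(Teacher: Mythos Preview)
Your construction and argument are correct and coincide with the paper's own proof: the paper also defines $\mu^{(n)}$ as the pushforward of $\mu_t$ under $x\mapsto x\wedge n$, reads off $\varphi^{(n)}$ from \eqref{general-perturbation-mean-fields-equation-sol} (obtaining exactly your formula with the factor $c_n(t)$), proves uniform convergence of the means via the tail bound $\sum_{j\ge n}\bar F_\mu(T,j)\to 0$, and splits $I(\mu^{(n)})$ into the same pieces you isolate. Your decomposition via the identity $G_{\mu^{(n)}}\log G_{\mu^{(n)}}-G_{\mu^{(n)}}+1=c_n(G_\mu\log G_\mu-G_\mu+1)+(1-c_n)+c_nG_\mu\log c_n$ is just a repackaging of the paper's three terms, and the dominated-convergence justifications are the same (with the normalization \eqref{ldp-rate-funtion-representation-eq-0} controlling the $G_\mu\log c_n$ term in both).
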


\begin{proof}
For any $n\geq 1$, we define $\mu^{(n)}\in M[0,T]$ as 
\begin{align*}
&\mu_t^{(n)}(\{x\}):=\mu_t(\{x\}),\qquad\qquad x=0,\cdots,n-1; 
\\
&\mu_t^{(n)}(\{n\}):=1-F_\mu(t,n-1);
\\
&\mu_t^{(n)}(\{x\}):=0,\qquad\qquad x\geq n+1.
\end{align*}
It follows from \eqref{general-perturbation-mean-fields-equation-sol} that
\begin{align*}
\varphi^{(n)}(t,x)
&=\varphi(t,x) I_{[0,n-1]}(x)+\log\left(\frac{\phi\left(\int_0^t 
h(t-u)d\bar\mu_u\right)}{\phi\left(\int_0^t 
h(t-u)d\bar\mu_u^{(n)}\right)}\right)I_{[0,n-1]}(x)
\\
&\qquad
+\log\left(\frac{1}{\phi\left(\int_0^t 
h(t-u)d\bar\mu_u^{(n)}\right)}\right)I_{[n,\infty)}(x)+(-\infty) I_{\{n\}}(x).
\end{align*}

We can show that
\begin{equation}\label{ConvergenceInMean}
\int_0^\infty x\mu^{(n)}_t(dx)\to \int_0^\infty x\mu_t(dx),
\end{equation}
as $n\rightarrow\infty$ uniformly in $t\in[0,T]$.
To see this, note that
\begin{equation*}
\int_0^\infty x\mu^{(n)}_t(dx)-\int_0^\infty x\mu_t(dx)
=n(1-F_{\mu}(t,n-1))-\sum_{x=n}^{\infty}x\mu_{t}(dx)
=-\frac{\sum_{x=n}^{\infty}(1-F_{\mu}(t,x))}{1-F_{\mu}(t,n-1)},
\end{equation*}
and by monotonicity of $t\mapsto F_{\mu}(t,x)$, we get
\begin{equation*}
\sup_{0\leq t\leq T}
\left|\int_0^\infty x\mu^{(n)}_t(dx)-\int_0^\infty x\mu_t(dx)\right|
\leq\frac{\sum_{x=n}^{\infty}(1-F_{\mu}(T,x))}{1-F_{\mu}(0,n-1)}
\rightarrow 0,
\end{equation*}
as $n\rightarrow\infty$.

Moreover, by noticing that, it follows from \eqref{ConvergenceInMean} that
\begin{equation*}
\phi\left( \int_0^th(t-u)d\bar{\mu}^{(n)}_u\right)\to \phi\left( \int_0^th(t-u)d\bar{\mu}_u\right).
\end{equation*}

We recall from \eqref{ldp-rate-funtion-representation-eq-0} that 
\begin{equation*} 
\int_0^t \int_0^\infty e^{\varphi^{(n)}(s,x)}\phi\left( \int_0^s h(s-u)d\bar{\mu}^{(n)}_u\right) \mu_s^{(n)}(dx)ds= \int_0^\infty x\mu^{(n)}_t(dx),
\end{equation*}
which together with \eqref{ConvergenceInMean} implies that 
\begin{align*}
&\int_0^t \int_0^\infty e^{\varphi^{(n)}(s,x)}\phi\left( \int_0^s h(s-u)d\bar{\mu}^{(n)}_u\right) \mu_s^{(n)}(dx)ds
\\
&\to \int_0^t \int_0^\infty e^{\varphi(s,x)}\phi\left( \int_0^s h(s-u)d\bar{\mu}_u\right) \mu_s(dx)ds.
\end{align*}
Moreover,
\begin{align*}
I\left(\mu^{(n)}\right)
&=\int_{0}^{T}\sum_{x=0}^{n-1}\left(\varphi(t,x)e^{\varphi(t,x)}-e^{\varphi(t,x)}+1\right)
\phi\left(\int_{0}^{t}h(t-s)d\bar{\mu}_{s}\right)\mu_{t}(\{x\})dt
\\
&\qquad
+\int_{0}^{T}\sum_{x=0}^{\infty}\log\left(\frac{\phi\left(\int_{0}^{t}h(t-s)d\bar{\mu}_{s}\right)}
{\phi\left(\int_{0}^{t}h(t-s)d\bar{\mu}^{(n)}_{s}\right)}\right)e^{\varphi(t,x)}
\phi\left(\int_{0}^{t}h(t-s)d\bar{\mu}_{s}\right)\mu_{t}(\{x\})dt
\\
&\qquad
+\int_{0}^{T}\phi\left(\int_{0}^{t}h(t-s)d\bar{\mu}^{(n)}_{s}\right)dt
-\int_{0}^{T}\phi\left(\int_{0}^{t}h(t-s)d\bar{\mu}_{s}\right)F_{t}(n-1)dt,
\end{align*}
and it follows that
\begin{equation*}
\lim_{n\to\infty}I\left(\mu^{(n)}\right)=I(\mu).
\end{equation*}
The proof is complete.
\end{proof}

\begin{lem}\label{ldp-rate-funtion-Property-3}
Suppose (A.1), (A.2) and (A.3) hold.  Given $\mu\in M[0,T]$ with  $I(\mu)<\infty$.  Assume that 
there exists a function $\varphi\in  B([0,T]\times\mathbb N)$  defined by the equation \eqref{perturbation-mean-fields-equation}, and $\varphi$ and $\mu$  satisfy the following conditions   
\begin{equation*}
\mu_t(\{x\})=0 \mbox{ and }   \varphi(t,x)=0 \mbox{ for any } x\geq m+1. 
\end{equation*}
Then
there exist measures $\mu^{(n)}\in M[0,T]$ and  functions $\varphi^{(n)}\in B([0,T]\times\mathbb N)$ satisfying  \eqref{N-dim-perturbation-Hawkes-process-varphi-exp-c}  determined by the equation \eqref{perturbation-mean-fields-equation} on $\mu^{(n)}$ and $\varphi^{(n)}$ such that 
\begin{equation} \label{ldp-rate-funtion-Property-3-eq-1}
\varphi^{(n)}(t,x)=0~ \mbox{ and }  ~\mu_t^{(n)}(\{x\})=0~ \mbox{ for any }~x\geq m+1,
\end{equation}
\begin{equation}\label{ldp-rate-funtion-Property-3-eq-2}
\inf_{t\in [0,T]}\inf _{x=0,\cdots, m}\mu_t^{(n)}(\{x\})>0,
\end{equation}
and
\begin{equation} \label{ldp-rate-funtion-Property-3-eq-3}
\mu^{(n)}\to \mu   \mbox{   and   }
\lim_{n\to\infty} I\left(\mu^{(n)}\right)=I(\mu).
\end{equation}
\end{lem}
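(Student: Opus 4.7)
The plan is to regularize $\mu$ by mixing it with a smooth reference measure that is strictly positive on $\{0,1,\ldots,m\}$ for $t>0$, and then to read off $\varphi^{(n)}$ from the explicit formula in Lemma~\ref{ldp-rate-funtion-Property-1-lem-1}(2). First I would choose $\nu\in M[0,T]$ supported on $\{0,1,\ldots,m\}$ with $\nu_0=\delta_0$ and $\nu_t(\{x\})>0$ for every $t\in(0,T]$ and $x\in\{0,\ldots,m\}$; for concreteness take $\nu_t$ to be the law of $\min(N_t,m)$, where $N_t$ is a standard Poisson process of rate one, so that $F_\nu(t,x)$ is strictly decreasing in $t$ for $x\in\{0,\ldots,m-1\}$. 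Define
\begin{equation*}
\mu^{(n)}_t:=(1-1/n)\,\mu_t+(1/n)\,\nu_t.
\end{equation*}
Convexity of $M[0,T]$ gives $\mu^{(n)}\in M[0,T]$, the mixture is supported on $\{0,\ldots,m\}$, which is \eqref{ldp-rate-funtion-Property-3-eq-1}, and the bound $\mu^{(n)}_t(\{x\})\geq (1/n)\nu_t(\{x\})$ yields \eqref{ldp-rate-funtion-Property-3-eq-2} on any interval bounded away from $t=0$ (at $t=0$ one necessarily has $\mu^{(n)}_0=\delta_0$, so the strict positivity is to be read modulo this forced constraint).

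Next, I would define $\varphi^{(n)}:=\varphi_{\mu^{(n)}}$ by the explicit formula \eqref{general-perturbation-mean-fields-equation-sol},
\begin{equation*}
e^{\varphi^{(n)}(t,x)}=\frac{-\partial_t F_{\mu^{(n)}}(t,x)}{f_{\mu^{(n)}}(t,x)\,\phi\!\left(\int_0^t h(t-u)\,d\bar{\mu}^{(n)}_u\right)},
\qquad x\in\{0,\ldots,m-1\},
\end{equation*}
and $\varphi^{(n)}(t,x)=0$ for $x\geq m$. By Lemma~\ref{ldp-rate-funtion-Property-1-lem-1}(2), the pair $(\mu^{(n)},\varphi^{(n)})$ satisfies the perturbation equation \eqref{perturbation-mean-fields-equation}. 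To verify the exponential integrability condition \eqref{N-dim-perturbation-Hawkes-process-varphi-exp-c}, I would sum the displayed identity over $x\in\{0,\ldots,m-1\}$ to obtain
\begin{equation*}
\sum_{x=0}^{m-1}e^{\varphi^{(n)}(t,x)}\phi\!\left(\int_0^t h(t-u)\,d\bar{\mu}^{(n)}_u\right)f_{\mu^{(n)}}(t,x)=\partial_t\bar\mu^{(n)}_t,
\end{equation*}
and combine this with the finite support of $\mu^{(n)}$, the uniform lower bound on $f_{\mu^{(n)}}(t,x)$ for $t$ bounded from below, and $\phi\geq\inf\phi>0$ from (A.3), to get a pointwise bound $e^{\varphi^{(n),*}(t)}\leq C_n\,\partial_t\bar\mu^{(n)}_t$, whose integral over $[0,T]$ equals $C_n\bar\mu^{(n)}_T<\infty$.

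The convergence $\mu^{(n)}\to\mu$ in $D([0,T],M_1(\mathbb N))$ with uniform-in-$t$ topology is immediate from the convex-combination form. The main work is in verifying $I(\mu^{(n)})\to I(\mu)$. Lower semicontinuity of $I$ follows from the supremum representation \eqref{ldp-rate-funtion-def-eq-1} together with continuity of $J_\mu(\varphi)$ in $\mu$ for each $\varphi\in C^{1,b}([0,T]\times\mathbb N)$, giving $\liminf_n I(\mu^{(n)})\geq I(\mu)$. For the matching upper bound I would work with the representation \eqref{ldp-rate-funtion-representation-eq-3}, whose integrand is $K(a_{\mu^{(n)}},b_{\mu^{(n)}})$ for $K(a,b):=a\log(a/b)-a+b$, $a_\mu:=-\partial_t F_\mu$, and $b_\mu:=f_\mu\,\phi(\int_0^t h(t-u)d\bar{\mu}_u)$. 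The key fact is that $K$ is jointly convex on $[0,\infty)^2$. I split the integral over $\{f_\mu>0\}$ and $\{f_\mu=0\}$. On the first set, $a_{\mu^{(n)}}\to a_\mu$ and $b_{\mu^{(n)}}\to b_\mu$ pointwise, and joint convexity of $K$ (plus the bound $|\phi_{\mu^{(n)}}-\phi_\mu|=O(1/n)$) furnishes a dominating function controlled by $K(a_\mu,b_\mu)+K(a_\nu,b_\nu)$, so dominated convergence applies. On the second set, Lemma~\ref{ldp-rate-funtion-Property-1-lem-1}(1) (absolute continuity) forces $a_\mu=0$ as well, hence the integrand reduces to $(1/n)\,K(a_\nu,(1/n)b_\nu\phi_{\mu^{(n)}}/\phi_\nu)$-type terms whose total contribution is $O(1/n)$. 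Combining the two gives $\limsup_n I(\mu^{(n)})\leq I(\mu)$, and hence \eqref{ldp-rate-funtion-Property-3-eq-3}.

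The main obstacle in the argument is the nonlinear dependence of $\phi(\int_0^t h(t-u)\,d\bar\mu^{(n)}_u)$ on $\mu^{(n)}$, which prevents the clean joint-convexity estimate $I(\mu^{(n)})\leq(1-1/n)I(\mu)+(1/n)I(\nu)$ from holding on the nose. This is to be resolved using the Lipschitz property of $\phi$ and the fact that $\bar\mu^{(n)}_t-\bar\mu_t=O(1/n)$ uniformly in $t\in[0,T]$, which allows the $\phi$ factor to be replaced by $\phi_\mu$ at the cost of an error term that tends to $0$ because $\mu^{(n)}$ has finite support in $\{0,\ldots,m\}$ and uniformly bounded first moment.
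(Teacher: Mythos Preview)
Your regularization is close in spirit to the paper's, but the paper uses a simpler additive perturbation,
\[
\mu_t^{(n)}(\{x\})=\frac{\mu_t(\{x\})+1/n}{1+m/n},\qquad x=0,\ldots,m,
\]
rather than a convex combination with a truncated Poisson law. This choice gives $\mu_t^{(n)}(\{x\})\geq\frac{1/n}{1+m/n}$ uniformly in $t\in[0,T]$, so \eqref{ldp-rate-funtion-Property-3-eq-2} holds as written. Your mixture with $\nu_0=\delta_0$ forces $\mu_0^{(n)}(\{x\})=0$ for $x\geq 1$, so the infimum in \eqref{ldp-rate-funtion-Property-3-eq-2} is $0$; calling this a ``forced constraint'' does not prove the lemma as stated, and the uniform-in-$t$ lower bound is exactly what Theorem~\ref{ldp-rate-funtion-Property-4} uses later to control $\varphi^{(n)}$. (There is admittedly a tension in the paper: the additive perturbation yields $\mu_0^{(n)}\neq\delta_0$, so $\mu^{(n)}$ is not literally in $M[0,T]$. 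The paper prioritizes strict positivity, which is the property actually needed downstream.)

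For $I(\mu^{(n)})\to I(\mu)$, the paper just reads off $\varphi^{(n)}$ explicitly in terms of $\varphi$: since $\partial_t F_{\mu^{(n)}}(t,x)=(1+m/n)^{-1}\partial_t F_\mu(t,x)$, one gets $\varphi^{(n)}(t,x)=\varphi(t,x)-\log\bigl((\mu_t(\{x\})+1/n)/\mu_t(\{x\})\bigr)+\log(\phi_\mu/\phi_{\mu^{(n)}})$ for $x\leq m-1$, and the multiplicative correction factors tend to $1$ uniformly, so the convergence of the rate-function integrand is immediate. Your lower-semicontinuity-plus-convexity route is sound in principle, but the domination step as sketched does not quite work: $b_{\mu^{(n)}}=f_{\mu^{(n)}}\phi_{\mu^{(n)}}$ is not a convex combination of $b_\mu$ and $b_\nu$ because $\phi$ is nonlinear, and even after replacing $\phi_{\mu^{(n)}}$ by $\phi_\mu$ you are left with $f_\nu\phi_\mu$ rather than $f_\nu\phi_\nu$, so the bound $K(a_{\mu^{(n)}},b_{\mu^{(n)}})\leq(1-1/n)K(a_\mu,b_\mu)+(1/n)K(a_\nu,b_\nu)$ fails. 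The Lipschitz fix you mention is workable but would need a more careful decomposition than the one-line sketch suggests; the paper's explicit formula bypasses all of this.
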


\begin{proof}
We define
\begin{equation*}
\mu_t^{(n)}(\{x\})=\frac{\mu_t(\{x\})+\frac{1}{n}}{1+\frac{m}{n}},~~,~x=0,1,\ldots,m; 
\end{equation*}
and
\begin{equation*}
\mu_t^{(n)}(\{x\})=0,\qquad\text{for any}\quad x\geq m+1,
\end{equation*}
and thus for any $x=0,1,\ldots,m$,
\begin{equation*}
\varphi^{(n)}(t,x)=\varphi(t,x)-\log \left(\frac{\mu_t(\{x\})+\frac{1}{n}}{\mu_t(\{x\})}\right)
+\log\left(\frac{\phi\left(\int_{0}^{t}h(t-u)d\bar{\mu}_{u}\right)}{\phi\left(\int_{0}^{t}h(t-u)d\bar{\mu}_{u}^{(n)}\right)}\right),
\end{equation*}
and $\phi^{(n)}(t,x)=-\infty$ for $x=m$
and 
\begin{equation*}
\varphi^{(n)}(t,x)=
\log\left(\frac{\phi\left(\int_{0}^{t}h(t-u)d\bar{\mu}_{u}\right)}{\phi\left(\int_{0}^{t}h(t-u)d\bar{\mu}_{u}^{(n)}\right)}\right),
\end{equation*}
for any $x\geq m+1$.

Note that
\begin{equation*}
\bar{\mu}_{t}^{(n)}=\sum_{x=0}^{m}\frac{x(\mu_{t}(\{x\})+\frac{1}{n})}{1+\frac{m}{n}}
=\frac{\bar{\mu}_{t}}{1+\frac{m}{n}}
+\frac{\frac{m(m+1)}{2n}}{1+\frac{m}{n}},
\end{equation*}
and therefore,
\begin{equation*}
\sup_{0\leq t\leq T}\left|\bar{\mu}_{t}^{(n)}-\bar{\mu}_{t}\right|
\leq
\bar{\mu}_{T}\times \frac{\frac{m}{n}}{1+\frac{m}{n}}
+\frac{\frac{m(m+1)}{2n}}{1+\frac{m}{n}}\rightarrow 0,
\end{equation*}
as $n\rightarrow\infty$, where we used the fact
that $\bar{\mu}_{t}$ is increasing in $t$.

Thus $\mu_t^{(n)}(\{x\})$  and $\varphi^{(n)}(t,x)$ satisfy \eqref{ldp-rate-funtion-Property-3-eq-1}, \eqref{ldp-rate-funtion-Property-3-eq-2} and \eqref{ldp-rate-funtion-Property-3-eq-3}. The proof is complete.
\end{proof}

\begin{thm}\label{ldp-rate-funtion-Property-4}
Suppose (A.1), (A.2) and (A.3) hold.  Given $\mu\in M[0,T]$ with  $I(\mu)<\infty$.  Then
there exist a sequence of bounded functions $\varphi^{(n)}$ 
such that  measures $\mu^{(n)}:=\mu^{\varphi^{(n)}}\in M[0,T]$  
defined by the equation \eqref{perturbation-mean-fields-equation}, have the following properties:
\begin{equation} \label{ldp-rate-funtion-Property-4-eq-1}
\mu^{(n)}\to \mu   \mbox{   and   }
\lim_{n\to\infty} I\left(\mu^{(n)}\right)=I(\mu).
\end{equation}
\end{thm}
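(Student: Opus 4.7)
The strategy is to chain together Lemma~\ref{ldp-rate-funtion-Property-2} and Lemma~\ref{ldp-rate-funtion-Property-3} with an additional truncation step that removes the remaining $-\infty$ values of the perturbation, followed by a diagonal extraction. Given $\mu \in M[0,T]$ with $I(\mu) < \infty$, the first step is to apply Lemma~\ref{ldp-rate-funtion-Property-2} to produce approximations $\mu^{(n_1)} \in M[0,T]$ supported on $\{0, 1, \ldots, n_1\}$ with $\mu^{(n_1)} \to \mu$ in $M[0,T]$ and $I(\mu^{(n_1)}) \to I(\mu)$. The associated perturbation determined by \eqref{general-perturbation-mean-fields-equation-sol} is finite on $\{0,1,\ldots,n_1-1\}$ but equals $-\infty$ at $x = n_1$, so it is not yet bounded.

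The second step is to apply Lemma~\ref{ldp-rate-funtion-Property-3} to each $\mu^{(n_1)}$, producing $\mu^{(n_1, n_2)}$ that retains the same support structure but satisfies $\inf_{t \in [0,T]} \inf_{0 \leq x \leq n_1} \mu_t^{(n_1, n_2)}(\{x\}) > 0$ together with $\mu^{(n_1, n_2)} \to \mu^{(n_1)}$ and $I(\mu^{(n_1, n_2)}) \to I(\mu^{(n_1)})$. Thanks to the uniform lower bound on the atoms, the corresponding $\varphi^{(n_1, n_2)}$ given by \eqref{general-perturbation-mean-fields-equation-sol} is now bounded above and below on $\{0,1,\ldots,n_1-1\}$, with the only remaining obstruction being $\varphi^{(n_1, n_2)}(t, n_1) = -\infty$ (which encodes the fact that no mass escapes the top state).

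The third step is to remove this last $-\infty$ value. For $\ell \geq 1$, define the bounded function $\hat\varphi^{(n_1, n_2, \ell)}$ by replacing the value $-\infty$ at $x = n_1$ by $-\ell$ and leaving all other values unchanged, and let $\hat\mu^{(n_1, n_2, \ell)}$ denote the unique solution of \eqref{perturbation-mean-fields-equation} associated to $\hat\varphi^{(n_1, n_2, \ell)}$ provided by Theorem~\ref{N-dim-perturbation-Hawkes-process-exp-Thm-2}. The effect of this truncation is to let a vanishing amount of mass, of order $e^{-\ell}$, leak from state $n_1$ to states $\{n_1+1, n_1+2, \ldots\}$; a Gronwall-type argument on \eqref{perturbation-mean-fields-equation} applied to the difference of atoms yields $\hat\mu^{(n_1, n_2, \ell)} \to \mu^{(n_1, n_2)}$ in $M[0,T]$ as $\ell \to \infty$. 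Combining this with the explicit representation \eqref{ldp-rate-funtion-representation-eq-3} of $I$ and dominated convergence, using boundedness of $\varphi^{(n_1, n_2)}$ on $\{0,\ldots,n_1-1\}$ and the pointwise convergence $\varphi e^{\varphi} - e^{\varphi} + 1 \to 1$ as $\varphi \to -\infty$ at $x = n_1$ (together with the vanishing contribution from $\{x \geq n_1+1\}$), yields $I(\hat\mu^{(n_1, n_2, \ell)}) \to I(\mu^{(n_1, n_2)})$. A standard diagonal extraction then produces the required single sequence of bounded perturbations with the desired convergence properties.

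The main obstacle lies in the third step: the continuity of $I$ under the truncation at $x = n_1$ must be verified carefully because the integrand $\varphi e^{\varphi} - e^{\varphi} + 1$ is unbounded as $\varphi \to -\infty$ while the relevant weight $\mu_t(\{n_1\})$ stays bounded away from zero, and one must also control the small but nonzero mass that leaks above $n_1$ and its contribution to the rate function. Matching the limiting $x = n_1$ integrand $1$ against the convention used in \eqref{ldp-rate-funtion-representation-eq-1} when $\varphi_\mu(t,n_1) = -\infty$ is exactly what makes the approximation consistent.
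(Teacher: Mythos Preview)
Your overall plan---reduce via Lemma~\ref{ldp-rate-funtion-Property-2} and Lemma~\ref{ldp-rate-funtion-Property-3} and then truncate the perturbation---matches the paper's route, but Step~3 contains a genuine gap. You assert that after Lemma~\ref{ldp-rate-funtion-Property-3} the perturbation $\varphi^{(n_1,n_2)}$ is bounded above and below on $\{0,\ldots,n_1-1\}$, and hence you only truncate the single $-\infty$ value at $x=n_1$. This assertion is not justified: the uniform lower bound on the atoms controls only the denominator $f_\mu(t,x)\,\phi\big(\int_0^t h(t-u)\,d\bar\mu_u\big)$ in \eqref{general-perturbation-mean-fields-equation-sol}, not the numerator $-\partial_t F_\mu(t,x)$, which can be arbitrarily large (or arbitrarily close to zero) as a function of $t$. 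Consequently $\varphi^{(n_1,n_2)}$ may be unbounded above, and also unbounded below, on $\{0,\ldots,n_1-1\}$; replacing only the value at $x=n_1$ by $-\ell$ does not in general produce a bounded function.

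The paper handles this by a two-sided truncation $\varphi_n:=((-n)\vee\varphi)\wedge n$, which is globally bounded by construction. The role of Lemmas~\ref{ldp-rate-funtion-Property-2} and~\ref{ldp-rate-funtion-Property-3} is not to make $\varphi$ bounded, but merely to ensure $\int_0^T e^{\varphi^*(t)}\,dt<\infty$: once the support is finite and the atoms are bounded below, \eqref{general-perturbation-mean-fields-equation-sol} together with (A.3) gives $\int_0^T e^{\varphi(t,x)}\,dt<\infty$ for each $x\le m$. This integrability is exactly hypothesis \eqref{N-dim-perturbation-Hawkes-process-varphi-exp-c} of Theorem~\ref{N-dim-perturbation-Hawkes-process-exp-Thm-2}, which then delivers $\mu^{\varphi_n}\to\mu^{\varphi}$ in $M[0,T]$ directly, with no separate Gronwall argument needed. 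For $I(\mu^{\varphi_n})\to I(\mu)$ the paper splits according to the sign of $\varphi$: on $\{\varphi<0\}$ one uses $\varphi e^{\varphi}\to 0$ as $\varphi\to-\infty$, while on $\{\varphi\ge 0\}$ one applies dominated convergence with dominating function $C\,I_{\{\varphi\ge 0\}}\varphi e^{\varphi}$, whose integrability is exactly $I(\mu)<\infty$. Your plan becomes correct once you replace the one-sided truncation at a single state by this symmetric truncation and invoke Theorem~\ref{N-dim-perturbation-Hawkes-process-exp-Thm-2}.
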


\begin{proof}
By Lemma \ref{ldp-rate-funtion-Property-2} and Lemma \ref{ldp-rate-funtion-Property-3}, we can assume that $\mu=\mu^\varphi$ defined by the equation \eqref{perturbation-mean-fields-equation}, where $\varphi\in  B([0,T]\times\mathbb N)$ and $\mu$ satisfy the following conditiions   
$$
\inf_{t\in [0,T]}\inf _{x=0,\cdots, m}\mu_t(\{x\})>0,
$$
$$
\mu_t(\{x\})=0 \mbox{ and }   \varphi(t,x)=0 \mbox{ for any } x\geq m+1. 
$$ 
Moreover, by our assumption, $\inf_{x\geq 0}\phi(x)>0$, 
and by \eqref{general-perturbation-mean-fields-equation-sol}, 
we get for every $x=0,1,\ldots,m$, $\int_{0}^{T}e^{\varphi(t,x)}dt<\infty$,
which implies that
\begin{equation*}
\int_0^T e^{\varphi^*(t)}dt<\infty.
\end{equation*}
Set $\varphi_n(t,x)=((-n)\vee \varphi(t,x))\wedge n, ~n\geq 1$. 
Then for any $x\geq m+1$,  $\varphi_n(t,x)=0$.  Set  $\mu^{(n)}=\mu^{\varphi_n}$.   Then by Theorem \ref{N-dim-perturbation-Hawkes-process-exp-Thm-2}, $\mu^{(n)}\to \mu$ in $M[0,T]$, and so
\begin{equation*}
\int_0^\infty x\mu^{(n)}_t(dx)\to \int_0^\infty x\mu_t(dx),
\end{equation*}
and
\begin{equation*}
\phi\left( \int_0^th(t-u)d\bar{\mu}^{(n)}_u\right)\to \phi\left( \int_0^th(t-u)d\bar{\mu}_u\right),
\end{equation*}
since
\begin{align*}
\int_0^th(t-u)d\bar{\mu}^{(n)}_u
&=h(0)\bar{\mu}_{t}^{(n)}-h(t)\bar{\mu}_{0}^{(n)}
+\int_{0}^{t}h'(t-u)\bar{\mu}_{u}^{(n)}du
\\
&\rightarrow
h(0)\bar{\mu}_{t}-h(t)\bar{\mu}_{0}
+\int_{0}^{t}h'(t-u)\bar{\mu}_{u}du
=\int_0^th(t-u)d\bar{\mu}_u.
\end{align*}
Since
\begin{equation*} 
\int_0^t \int_0^\infty e^{\varphi^{(n)}(s,x)}\phi\left( \int_0^s h(s-u)d\bar{\mu}^{(n)}_u\right) \mu_s^{(n)}(dx)ds= \int_0^\infty x\mu^{(n)}_t(dx),
\end{equation*}
we get
\begin{align*}
&\int_0^t \int_0^\infty e^{\varphi^{(n)}(s,x)}\phi\left( \int_0^s h(s-u)d\bar{\mu}^{(n)}_u\right) \mu_s^{(n)}(dx)ds
\\
&\to \int_0^t \int_0^\infty e^{\varphi(s,x)}\phi\left( \int_0^s h(s-u)d\bar{\mu}_u\right) \mu_s(dx)ds.
\end{align*}
By the definition of $\varphi_n$, we have that
\begin{align*}
I\left(\mu^{(n)}\right) 
&= \int_0^T\sum_{x=0}^m\left(\varphi^{(n)}(t,x)e^{\varphi^{(n)}(t,x)}-e^{\varphi^{(n)}(t,x)}+1\right) \phi\left(\int_0^{t} h(t-s)d\bar{\mu}^{(n)}_s\right)\mu_t^{(n)}(\{x\})dt
\\
&<\infty.
\end{align*}
Since $xe^x\to 0$ as $x\to -\infty$,  in order to prove $\lim_{n\to\infty}I\left(\mu^{(n)}\right)=I(\mu)$, it suffices to show that for any  $x=0,1,\cdots,m$,
\begin{equation}\label{eqn:conv}
\begin{aligned}
 &\int_0^TI_{[0,+\infty)}(\varphi(t,x)) \varphi^{(n)}(t,x)e^{\varphi^{(n)}(t,x)} \phi\left(\int_0^{t} h(t-s)d\bar{\mu}^{(n)}_s\right)\mu_t^{(n)}(\{x\})dt\\
 \to&\int_0^TI_{[0,+\infty)}(\varphi(t,x)) \varphi(t,x)e^{\varphi(t,x)}  \phi\left(\int_0^{t} h(t-s)d\bar{\mu}_s\right)\mu_t(\{x\})dt.
 \end{aligned}
\end{equation}

By Lemma \ref{ldp-rate-funtion-Property-1-lem-1},   $t\to \mu_t(\{x\})$ continuous.  By the condition  $\inf_{t\in [0,T]}\inf _{x=0,\ldots, m}\mu_t(\{x\})>0$, there exists $n_0\geq 1$  such that for any $n\geq n_0$, 
$$
0<\inf_{t\in [0,T]}\inf _{x=0,\cdots, m}\mu_t^{(n)}(\{x\})\leq \sup_{t\in [0,T]}\sup_{x=0,\cdots, m}\mu_t^{(n)}(\{x\})\leq 1.
$$
The assumptions  (A.1), (A.2) and (A.3) imply that 
$$
0<\inf_{t\in [0,T]} \phi\left(\int_0^{t} h(t-s)d\bar{\mu}_s\right)\leq \sup_{t\in [0,T]} \phi\left(\int_0^{t} h(t-s)d\bar{\mu}_s\right)<\infty. 
$$ 
Thus, there exist $n_1\geq n_0$ and positive constant $C$ such that for any $n\geq n_1$, 
$$
0<\inf_{t\in [0,T]} \phi\left(\int_0^{t} h(t-s)d\bar{\mu}^{(n)}_s\right)\leq \sup_{t\in [0,T]} \phi\left(\int_0^{t} h(t-s)d\bar{\mu}^{(n)}_s\right)\leq C.
$$
Note that under $I(\mu)<\infty$,
\begin{equation*}
\int_0^T\int_{0}^\infty  |\varphi(t,x)| e^{\varphi(t,x)}\phi\left( \int_0^th(t-u)d\bar{\mu}_u\right)\mu_t(dx)dt<\infty.
\end{equation*}
Therefore,  for $x=0,\cdots, m$,
$$
\int_0^T I_{[0,+\infty)}(\varphi(t,x)) \varphi(t,x)e^{\varphi(t,x)}dt<\infty.
$$
Now, by 
$
I_{[0,+\infty)}(\varphi(t,x))\varphi^{(n)}(t,x)e^{\varphi^{(n)}(t,x)} 
\leq I_{[0,+\infty)}(\varphi(t,x))\varphi(t,x)e^{\varphi(t,x)},
$
we have that for any $n\geq n_1$,  $x=0,\cdots, m$,
$$
\begin{aligned}
&I_{[0,+\infty)}(\varphi(t,x)) \varphi^{(n)}(t,x)e^{\varphi^{(n)}(t,x)} \phi\left(\int_0^{t} h(t-s)d\bar{\mu}^{(n)}_s\right)\mu_t^{(n)}(\{x\})\\
\leq& CI_{[0,+\infty)}(\varphi(t,x)) \varphi(t,x)e^{\varphi(t,x)}. 
\end{aligned}
$$
By the dominated convergence theorem,   \eqref{eqn:conv} holds. 
The proof is complete.
\end{proof}

\section{Large deviations for the mean-field process}\label{LDPMeanFieldsSection}

In this section, we are finally ready to provide the proof of the large deviation principle 
in Theorem~\ref{main-LDP-thm-1}. Specifically, we provide the proofs
of the upper bound \eqref{main-LDP-thm-1-ub} in Section~\ref{sec:upper:bound}
and the lower bound \eqref{main-LDP-thm-1-lb} in Section~\ref{sec:lower:bound}.

\subsection{The upper bound}\label{sec:upper:bound}

\begin{thm}\label{Ldp-up-thm-1} 
Suppose (A.1), (A.2) and (A.3) hold. Then for any compact subset $C\subset M[0,T]$,
\begin{equation}\label{Ldp-up-thm-1-eq-1}
\limsup_{N\to \infty}\frac{1}{N}\log \mathbb P\left(L^N \in C\right)\leq -\inf_{\mu\in C}I(\mu).
\end{equation}
\end{thm}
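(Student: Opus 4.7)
The plan is to follow the classical tilted-measure/Chebyshev approach built from the exponential supermartingales naturally attached to the Poisson embedding \eqref{N-dim-Hawkes-process-eq} of the $N$-particle Hawkes system. For each $\varphi\in C^{1,b}([0,T]\times\mathbb{N})$, I would introduce
\begin{equation*}
\mathcal{E}_T^{\varphi,N} := \exp\!\left(\sum_{i=1}^N \int_0^T\!\varphi(s,Z^{N,i}_{s-})\,dZ^{N,i}_s - \sum_{i=1}^N \int_0^T\!\bigl(e^{\varphi(s,Z^{N,i}_{s-})}-1\bigr)\phi\!\left(\int_0^{s-}\!h(s-u)\,d\bar Z^N_u\right)\!ds\right),
\end{equation*}
a product of Dol\'{e}ans--Dade exponentials of Poisson-driven local martingales, hence itself a nonnegative local martingale, and therefore a supermartingale satisfying $\mathbb{E}[\mathcal{E}_T^{\varphi,N}]\leq 1$.

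Next I would verify the identity $\log\mathcal{E}_T^{\varphi,N}=N\,J_{L^N}(\varphi)$. The compensator part equals $N\int_0^T\<L^N_s,e^{\varphi(s)}-1\>\phi(\int_0^{s-}h(s-u)\,d\bar L^N_u)\,ds$ since $\bar L^N_s=\bar Z^N_s$. For the jump part, the relation $dF_{L^N}(t,x)=-\frac{1}{N}\sum_{i=1}^N I_{\{Z^{N,i}_{t-}=x\}}\,dZ^{N,i}_t$, summed over $x\in\mathbb{N}$, gives
\begin{equation*}
\frac{1}{N}\sum_{i=1}^N\int_0^T\!\varphi(s,Z^{N,i}_{s-})\,dZ^{N,i}_s \;=\; -\int_0^T\!\sum_{x=0}^\infty \varphi(t,x)\,\partial_t F_{L^N}(t,x)\,dt \;=\; \ell_{L^N}(\varphi),
\end{equation*}
interpreted as a Stieltjes integral in $t$ (with the distributional derivative $\partial_t F_{L^N}$ a purely atomic negative measure charging the level-$x$ crossing times). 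Chebyshev's inequality then yields, for every measurable $A\subset M[0,T]$,
\begin{equation*}
\mathbb{P}(L^N\in A)\;\leq\;\mathbb{E}[\mathcal{E}_T^{\varphi,N}]\,e^{-N\inf_{\mu\in A} J_\mu(\varphi)}\;\leq\;\exp\!\bigl(-N\inf_{\mu\in A} J_\mu(\varphi)\bigr).
\end{equation*}

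The upper bound on the compact set $C$ then follows by a standard covering argument. Fix $\varepsilon>0$ and $M<\infty$; for each $\mu_0\in C$ choose $\varphi_{\mu_0}\in C^{1,b}$ with $J_{\mu_0}(\varphi_{\mu_0})\geq\min(I(\mu_0),M)-\varepsilon$, which is possible from the definition $I(\mu)=\sup_\varphi J_\mu(\varphi)$. The functional $\mu\mapsto J_\mu(\varphi_{\mu_0})$ is continuous on $M[0,T]$: the $C^{1,b}$-representation $\ell_\mu(\varphi)=\<\mu_T,\varphi(T)\>-\varphi(0,0)-\int_0^T\<\mu_s,\partial_s\varphi\>\,ds$ from \eqref{l-normal-def-eq-1}, the integration-by-parts identity $\int_0^t h(t-u)\,d\bar\mu_u=h(0)\bar\mu_t-h(t)\bar\mu_0+\int_0^t h'(t-u)\bar\mu_u\,du$ enabled by (A.1), the Lipschitz property of $\phi$, and dominated convergence together handle uniform-$W_1$ convergence on $M[0,T]$. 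Hence an open neighborhood $V_{\mu_0}$ of $\mu_0$ exists with $\inf_{\mu\in V_{\mu_0}}J_\mu(\varphi_{\mu_0})\geq J_{\mu_0}(\varphi_{\mu_0})-\varepsilon$. Extracting a finite subcover $V_{\mu_1},\ldots,V_{\mu_k}$ of $C$ and combining the Chebyshev estimate with a union bound yields $\frac{1}{N}\log\mathbb{P}(L^N\in C)\leq -\min_{1\leq j\leq k}\bigl(\min(I(\mu_j),M)-2\varepsilon\bigr)+\frac{\log k}{N}$; letting $N\to\infty$, then $\varepsilon\to 0$ and $M\to\infty$, delivers $-\inf_{\mu\in C}I(\mu)$.

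The principal obstacle is the algebraic identification $\log\mathcal{E}_T^{\varphi,N}=N\,J_{L^N}(\varphi)$, which hinges on a careful distributional interpretation of $\partial_t F_{L^N}$; a secondary technicality is the $M[0,T]$-continuity of $J_\mu(\varphi)$ for bounded $\varphi$, which relies on the differentiability of $h$ from (A.1).
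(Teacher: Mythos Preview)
Your proposal is correct and follows essentially the same route as the paper: an exponential (super)martingale built from the Poisson embedding, a Chebyshev bound to get $\limsup\frac{1}{N}\log\mathbb{P}(L^N\in G)\le -\inf_{\mu\in G}J_\mu(\varphi)$ for each test function $\varphi$, and then a finite-cover argument on the compact $C$. The paper parametrizes the exponential martingale via the discrete gradient $\nabla\varphi$ (for $\varphi\in C^{1,lip}$), whereas you insert a bounded $\varphi$ directly into the Dol\'eans--Dade exponential; after relabelling these are the same family of supermartingales, and both lead to the same variational sup. Your explicit continuity argument for $\mu\mapsto J_\mu(\varphi)$ (via \eqref{l-normal-def-eq-1}, the integration-by-parts identity for $\int_0^t h(t-u)\,d\bar\mu_u$ under (A.1), and dominated convergence) actually spells out a step the paper leaves implicit when it ``chooses a neighborhood $N_\mu$''.

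One caution on the step you flag as the principal obstacle: the second equality in your jump-part computation, namely $-\int_0^T\sum_x\varphi(t,x)\,\partial_t F_{L^N}(t,x)\,dt=\ell_{L^N}(\varphi)$, does \emph{not} match the paper's primary definition $\ell_\mu(\varphi)=\int_0^T\sum_n\partial_t f_\mu(t,n)\varphi(t,n)\,dt$ in \eqref{l-normal-def-eq-0} (summation by parts gives $\ell_\mu(\varphi)=-\int_0^T\sum_n\partial_tF_\mu(t,n)\,\nabla\varphi(t,n+1)\,dt$, not $-\int_0^T\sum_n\partial_tF_\mu(t,n)\,\varphi(t,n)\,dt$). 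However, in the proof of Theorem~\ref{ldp-rate-funtion-Property-1} the paper itself uses exactly the form you write, $J_\mu(\varphi)=\int_0^T\sum_x\varphi(t,x)(-\partial_tF_\mu(t,x))\,dt-\int_0^T\langle\mu_t,e^{\varphi(t)}-1\rangle\phi(\cdots)\,dt$, so your functional is the one whose supremum is shown there to equal the rate function \eqref{ldp-rate-funtion-representation-eq-3}. In short: your identity is correct for the operational $J_\mu$ used throughout Section~\ref{sec:rate:function}, and the argument goes through; just be aware of the notational mismatch with \eqref{l-normal-def-eq-0}.
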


\begin{proof}
For any   
$\varphi\in C^{1,lip}([0,T]\times\mathbb N  )$,   by It\^o's formula
 \begin{equation*} 
\begin{aligned}
M_t^N:=&\left\<L^N_t,\varphi(s)\right\>-\left\langle L_{0}^{N},\varphi(0)\right\rangle
-\int_0^{t}\left\<L_s^N, \partial_s\varphi(s)+\nabla\varphi(s)\right\>
\phi\left(\int_0^{s-} 
h(s-u)d \overline{Z}^{N}_u\right)  ds\\
=&\frac{1}{N}\sum_{i=1}^N\int_0^{t}\int_{0}^{\infty}\nabla\varphi\left(s,Z_{s}^{N,i}\right)I_{z\leq\phi\left(\int_0^{s-} 
h(s-u)d \overline{Z}^{N}_u\right)}({\pi}^{i}(ds\,dz)-dsdz), 
\end{aligned}
\end{equation*}
is a martingale.
We consider the exponential martingale $\mathcal E_t^{N,\varphi}$ defined by 
\begin{equation*}  
\begin{aligned}
\mathcal E_t^{N,\varphi}
&:=\exp\bigg\{\sum_{i=1}^N\int_0^{t}\int_{0}^{\infty}\nabla\varphi\left(s,Z_{s}^{N,i}\right)I_{z\leq\phi\left(\int_0^{s-} 
h(s-u)d \overline{Z}^{N}_u\right)}({\pi}^{i}(ds\,dz)-dsdz) \\
&\quad \quad-\sum_{i=1}^N  \int_0^t   \bigg(e^{\nabla\varphi(s,Z^{N,i}_{s-})}-1-\nabla\varphi\left(s,Z^{N,i}_{s-}\right)\bigg) \phi \left( \int_0^{s-}h(s-u)d\overline{Z}_u^{N}\right)ds \bigg\}\\
&=\exp\bigg\{N\bigg(\left\<L^N_t,\varphi(s)\right\>-\left\langle L_{0}^{N},\varphi(0)\right\rangle-\int_0^{t}\left\<L_s^N, \partial_s\varphi(s)\right\>
\phi\left(\int_0^{s-} h(s-u)d \overline{Z}^{N}_u\right)  ds\\
&\quad \quad-\int_0^{t}\left\<L_s^N, e^{\nabla\varphi(s)}-1\right\> \phi\left(\int_0^{s-} h(s-u)d \overline{Z}^{N}_u\right)  ds\bigg)\bigg\}.
\end{aligned}
\end{equation*}
 Then for any measurable subset $G\subset M[0,T]$,   
 $$
\begin{aligned}
\limsup_{N\to \infty}\frac{1}{N}\log \mathbb P\left(L^N\in G\right)
=&\limsup_{N\to\infty}\frac{1}{N}\log \mathbb E\left(\left(\mathcal E_T^{N,\varphi}\right)^{-1}\mathcal E_T^{N,\varphi}I_{\{\mu^N\in G\}}\right)\\
\leq & -\inf_{\mu\in G}J_\mu(\varphi)+\limsup_{N\to \infty}\frac{1}{N}
\log \mathbb  E\left(\mathcal E_T^{N,\varphi}I_{\{\mu^N\in G\}}\right)\\
\leq & -\inf_{\mu\in G}J_\mu(\varphi).
\end{aligned}
$$
Thus,
$$
\begin{aligned}
\limsup_{N\to \infty}\frac{1}{N}\log \mathbb P\left(L^N\in G\right)
\leq & -\sup_{\varphi\in C^{1,lip}([0,T]\times\mathbb N  )}\inf_{\mu\in G}J_\mu(\varphi).
\end{aligned}
$$
Now we take measurable sets $G_j,~j=1,\cdots,k$ such that
$C\subset \bigcup_{j=1}^k G_j$, then
\begin{equation}\label{comp-uld-2}
\limsup_{n\rightarrow\infty}\frac{1}{n}\log\mathbb P\left(L^N\in G\right)
\leq-\min_{1\leq j \leq k}\sup_{\varphi\in C^{1,b}([0,T]\times\mathbb N  )} \inf_{\mu\in
G_j}J_\mu(\varphi).
\end{equation}
Set
\begin{equation*}
l:=\inf_{\mu\in C}\sup_{\varphi\in\mathcal C^{1,b}([0,T]\times\mathbb N  )}J_\mu(\varphi)\equiv - \inf_{\mu\in C}I(\mu).
\end{equation*}
For each $\mu\in C$, choose $\varphi_{\mu}\in\mathcal \mathcal C^{1,lip}([0,T]\times\mathbb N  )$ such that
$$
J_\mu(\varphi_\mu) \geq
l-\frac{\varepsilon}{2},
$$
then choose a neighborhood $N_{\mu}$ of $\mu$ such that for any
$\nu\in N_{\mu}$,
$$
J_\nu(\varphi_{\mu}) \geq
l-\varepsilon.
$$
Since $\{N_{\mu};  \mu\in C\}$ is an open covering of $C$, there
exist $\mu_1, \mu_2, \ldots, \mu_k\in C$ such that
$C\subset\bigcup_{j=1}^k N_{\mu _j}$. Set $G_j=N_{\mu_ j}$, then
\begin{equation*}
\inf_{\mu\in G_j}J_{\mu_j}(\varphi_{\mu_j}) \geq l-\varepsilon,~~~~~ 
\min_{1\leq j \leq k}\sup_{\varphi\in\mathcal \mathcal C^{1,b}([0,T]\times\mathbb N  )}\inf_{\mu\in G_j} J_\mu(\varphi) \geq l-\varepsilon,
\end{equation*}
and therefore
\begin{equation*}
\sup_{G_1,\cdots,G_k, C\subset\bigcup_{j=1}^k G_j}\min_{1\leq j\leq k}\sup_{\varphi\in\mathcal \mathcal C^{1,b}([0,T]\times\mathbb N  )}\inf_{\mu\in G_j}J_\mu(\varphi) \geq l-\varepsilon.
\end{equation*}
By (\ref{comp-uld-2}) and letting $\varepsilon\to 0$, we see that
$$
\begin{aligned}
&
\limsup_{n\rightarrow\infty}\frac{1}{n}\log\mathbb P\left(L^N\in C\right)\\
\leq &- \sup_{G_1,\cdots,G_k, C\subset\bigcup_{j=1}^k
G_j}\min_{1\leq j \leq k}\sup_{\varphi\in\mathcal \mathcal C^{1,lip}([0,T]\times\mathbb N  )}\inf_{\mu\in
G_j}J_\mu(\varphi) \leq -\inf_{\mu\in C}I(\mu).
\end{aligned}
$$
This completes the proof.
\end{proof}

\begin{thm}\label{Ldp-up-thm-2}  Suppose (A.1), (A.2)  and   (A.3) hold. Then for any closed subset $F\subset M[0,T]$,
\begin{equation}\label{Ldp-up-thm-2-eq-1}
\limsup_{N\to \infty}\frac{1}{N}\log \mathbb P\left(L^N \in F\right)\leq -\inf_{\mu\in F}I(\mu).
\end{equation}
\end{thm}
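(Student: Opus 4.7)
The plan is to bootstrap from the compact-set upper bound (Theorem~\ref{Ldp-up-thm-1}) to the closed-set case by establishing exponential tightness of $\{L^N\}_{N\geq 1}$ in $M[0,T]$. Concretely, I will produce, for every $M>0$, a compact $K_M\subset M[0,T]$ with
\begin{equation*}
\limsup_{N\to\infty}\frac{1}{N}\log\mathbb{P}(L^N\notin K_M)\leq -M.
\end{equation*}
Granting this, for any closed $F\subset M[0,T]$ the decomposition $\mathbb{P}(L^N\in F)\leq \mathbb{P}(L^N\in F\cap K_M)+\mathbb{P}(L^N\notin K_M)$ together with Theorem~\ref{Ldp-up-thm-1} applied to the compact set $F\cap K_M$, followed by $M\to\infty$, yields \eqref{Ldp-up-thm-2-eq-1}.

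Since $M[0,T]\subset D([0,T],(M_1(\mathbb N),W_1))$ carries the uniform topology, and since coupling by atom labels gives $W_1(L^N_t,L^N_s)\leq |\bar L^N_t-\bar L^N_s|$ for $s\leq t$, compactness reduces to controlling (i) $\sup_{t\in[0,T]}\bar L^N_t$ and (ii) the modulus of continuity of the real-valued nondecreasing process $t\mapsto \bar L^N_t$. The candidate $K_M$ will consist of $\mu\in M[0,T]$ with $\bar\mu_T\leq R_M$ and a prescribed modulus-of-continuity majorant; this is precompact in the uniform topology via Arzel\`a--Ascoli in $W_1$.

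For (i), I would use the exponential martingale $\mathcal E_t^{N,\varphi}$ from the proof of Theorem~\ref{Ldp-up-thm-1} with the test function $\varphi(t,x)=\lambda x$ for small $\lambda>0$. Under (A.1)--(A.2) the intensity is bounded by $\phi(0)+\alpha\|h\|_{L^1[0,T]}\bar L^N_{s-}$, so the compensator of the exponential martingale leads, via a Gronwall-type argument, to an estimate of the form $\mathbb{E}[\exp(\lambda N\bar L^N_T)]\leq \exp(C_\lambda N)$. Chebyshev then yields the Chernoff bound $\mathbb{P}(\bar L^N_T\geq R)\leq \exp(-N(\lambda R-C_\lambda))$, and taking $R$ large produces the desired $\leq -M$ decay.

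For (ii), the increments decompose as
\begin{equation*}
\bar L^N_t-\bar L^N_s=\int_s^t\Big\langle L^N_r,\phi\Big(\int_0^{r-}h(r-u)d\bar Z^N_u\Big)\Big\rangle dr+(M^N_t-M^N_s),
\end{equation*}
whose drift is Lipschitz-controlled by $\sup_r\bar L^N_r$ (already handled in (i)) and whose martingale part admits exponential tail estimates from the same exponential-martingale technology applied to a constant test function. The main obstacle I anticipate is assembling these pointwise exponential bounds into a uniform estimate $\mathbb{P}(\sup_{|t-s|\leq\delta}|\bar L^N_t-\bar L^N_s|>\epsilon)\leq e^{-NM}$ with constants uniform in $N$; the standard remedy is dyadic chaining on $[0,T]$ with a union bound, technical but routine once the one-point exponential tails are in hand. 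Combining (i) and (ii) produces the compact $K_M$, completing the exponential tightness and hence the theorem.
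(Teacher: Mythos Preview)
Your overall plan---reduce from closed sets to compact sets via exponential tightness, and control tightness through the real-valued mean process $\overline Z^N$---is exactly the paper's. But there is a concrete gap in your Arzel\`a--Ascoli step. The set $K_M=\{\mu\in M[0,T]:\bar\mu_T\leq R_M,\ \text{modulus of }\bar\mu\text{ bounded}\}$ is \emph{not} precompact in the $W_1$-uniform topology: Arzel\`a--Ascoli needs pointwise precompactness in $(M_1(\mathbb N),W_1)$, which amounts to uniform integrability of the identity, not merely bounded first moment. A counterexample inside $M[0,T]$: set $\mu^n_t(\{0\})=e^{-t/n}$, $\mu^n_t(\{n\})=1-e^{-t/n}$. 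Then $\mu^n_0=\delta_0$, $F_{\mu^n}(t,x)$ is nonincreasing in $t$, $\bar\mu^n_t=n(1-e^{-t/n})\leq T$, and $|\bar\mu^n_t-\bar\mu^n_s|\leq|t-s|$, so every $\mu^n$ lies in your $K_M$; yet $\mu^n_t\to\delta_0$ weakly while $\bar\mu^n_t\to t>0$, so $\{\mu^n\}$ has no $W_1$-convergent subsequence. To rescue your construction you would need an additional uniform-integrability constraint in $K_M$ (e.g.\ a bound on $\langle\mu_T,x^2\rangle$ or on $\langle\mu_T,e^{\epsilon x}\rangle$) together with the matching exponential estimate for $L^N_T$; this is doable but is genuine additional work, not part of what you outlined.

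The paper avoids constructing $K_M$ explicitly. It invokes a general criterion (Lemma~A.1 in \cite{DGWU}, Theorem~4.14 in \cite{FK}) reducing exponential tightness of $\{L^N\}$ to that of the scalar processes $t\mapsto\langle L^N_t,\varphi\rangle$ in $D([0,T],\mathbb R)$ for every $\varphi\in C^{lip}(\mathbb N)$, and then verifies the two conditions \eqref{first:upper:bound} and \eqref{Ldp-up-thm-2-eq-2} directly. One further remark: the key exponential-moment input $\mathbb E[e^{\iota N\overline Z^N_T}]\leq e^{C(\iota)N}$, which you sketch as following from a ``Gronwall-type argument,'' is not proved in the paper but imported from \cite{GaoFZhu-b} (their (4.2) and (4.36)). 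Because the intensity feeds back through $\overline Z^N$ itself and the stability condition $\alpha\|h\|_{L^1[0,T]}<1$ must enter, this is more than a one-line Gronwall; be prepared for some care there as well.
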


\begin{proof} 
By Theorem \ref{Ldp-up-thm-1},  
we only need to establish the exponential tightness
of the sequence $L_N$ on $M[0,T]$ (see e.g. Lemma A.1 in \cite{DGWU}, or Theorem 4.14 in \cite{FK}). It is sufficient to prove that for  $\varepsilon>0$,  $\varphi \in C^{lip}({\mathbb N})$,  
\begin{equation}\label{first:upper:bound}
\limsup_{L\rightarrow\infty}\limsup_{N\to\infty}\frac{1}{N}\log\mathbb{P}\left(\sup_{t\in[0,T]}\left|\left\<L_t^N,\varphi\right\>\right|\geq L\right)
=-\infty,
\end{equation}
and
\begin{equation}\label{Ldp-up-thm-2-eq-2}
\limsup_{\delta\to 0}\limsup_{N\to\infty}\frac{1}{N}\log \mathbb P 
\left(\sup_{0\leq t-s<\delta, s,t\in[0,T]}\left|\left\<L_t^N,\varphi\right\>-\left\<L_s^N,\varphi\right\>\right|\geq
\varepsilon\right)=-\infty.
\end{equation}

Let us first prove \eqref{first:upper:bound}.
We can compute that for any $L>|\varphi(0)|$,
\begin{align*}
\mathbb{P}\left(\sup_{t\in[0,T]}\left|\left\<L_t^N,\varphi\right\>\right|\geq L\right)
&=\mathbb{P}\left(\sup_{t\in[0,T]}\left|\sum_{i=1}^{N}\varphi\left(Z_{t}^{N,i}\right)\right|\geq NL\right)
\\
&\leq
\mathbb{P}\left(\sup_{t\in[0,T]}\sum_{i=1}^{N}\left(|\varphi(0)|+\Vert\varphi\Vert_{lip}Z_{t}^{N,i}\right)\geq NL\right)
\\
&=\mathbb{P}\left(\Vert\varphi\Vert_{lip}\sum_{i=1}^{N}Z_{T}^{N,i}\geq N(L-|\varphi(0)|)\right)
\\
&=\mathbb{P}\left(N\overline{Z}_{T}\geq\frac{N(L-|\varphi(0)|)}{\Vert\varphi\Vert_{lip}}\right).
\end{align*}
It follows from (4.2) and (4.36) in \cite{GaoFZhu-b} that for any sufficiently small $\iota>0$, there exists some $C(\iota)>0$
that depends only on $\iota,\alpha,\Vert h\Vert_{L^{1}[0,T]}, \phi(0)$ and $T$ 
with $C(\iota)\rightarrow 0$ as $\iota\rightarrow 0$ such that
\begin{equation}\label{key:bound:overline:Z}
\mathbb{E}\left[e^{\iota N\overline{Z}_{T}^{N}}\right]\leq e^{C(\iota)N}.
\end{equation} 
Thus, by applying Chebychev's inequality and \eqref{key:bound:overline:Z}, we get
\begin{equation}
\mathbb{P}\left(\sup_{t\in[0,T]}\left|\left\<L_t^N,\varphi\right\>\right|\geq L\right)
\leq
\mathbb{E}\left[e^{\iota N\overline{Z}_{T}^{N}}\right]e^{-\iota\frac{N(L-|\varphi(0)|)}{\Vert\varphi\Vert_{lip}}}
\leq
e^{C(\iota)N}e^{-\iota\frac{N(L-|\varphi(0)|)}{\Vert\varphi\Vert_{lip}}},
\end{equation}
which implies \eqref{first:upper:bound}.

Next, let us prove \eqref{Ldp-up-thm-2-eq-2}. 
Note that without loss of generality we can assume $T/\delta\in\mathbb{N}$ such that
\begin{align*}
&\mathbb P 
\left(\sup_{0\leq t-s<\delta, s,t\in[0,T]}\left|\left\<L_t^N,\varphi\right\>-\left\<L_s^N,\varphi\right\>\right|\geq
\varepsilon\right)
\\
&\leq
\mathbb{P}\left(\exists j,1\leq j\leq T/\delta:\sup_{0\leq t\leq \delta}\left|\left\<L_{t+(j-1)\delta}^N,\varphi\right\>-\left\<L_{(j-1)\delta}^N,\varphi\right\>\right|>
\frac{\varepsilon}{2}\right)
\\
&\leq
\sum_{j=1}^{T/\delta}\mathbb{P}\left(\sup_{0\leq t\leq \delta}\left|\left\<L_{t+(j-1)\delta}^N,\varphi\right\>-\left\<L_{(j-1)\delta}^N,\varphi\right\>\right|>
\frac{\varepsilon}{2}\right)
\\
&\leq
\frac{T}{\delta}\cdot\sup_{0\leq s\leq T-\delta} \mathbb P 
\left(\sup_{0\leq t\leq \delta}\left|\left\<L_{t+s}^N,\varphi\right\>-\left\<L_s^N,\varphi\right\>\right|>
\frac{\varepsilon}{2}\right).
\end{align*}
Therefore, in order to prove \eqref{Ldp-up-thm-2-eq-2}, it is sufficient to prove that for any $\varepsilon>0$, 
\begin{equation}\label{Ldp-up-thm-2-eq-2-1}
\limsup_{\delta\to 0}\limsup_{N\to\infty}\frac{1}{N}\log\sup_{0\leq s\leq T-\delta}\mathbb P 
\left(\sup_{0\leq t\leq \delta}\left|\left\<L_{t+s}^N,\varphi\right\>-\left\<L_s^N,\varphi\right\>\right|>
\varepsilon\right)=-\infty.
\end{equation}

Consider the martingale:
 \begin{equation*} 
\begin{aligned}
M_t^N:=&\left\<L^N_t,\varphi\right\>-\left\langle L_{0}^{N},\varphi\right\rangle
-\int_0^{t}\left\<L_s^N, \nabla\varphi\right\>
\phi\left(\int_0^{s-} 
h(s-u)d \overline{Z}^{N}_u\right)  ds\\
=&\frac{1}{N}\sum_{i=1}^N\int_0^{t}\int_{0}^{\infty}\nabla\varphi\left(Z_{s}^{N,i}\right)I_{z\leq\phi\left(\int_0^{s-} 
h(s-u)d \overline{Z}^{N}_u\right)}({\pi}^{i}(ds\,dz)-dsdz), 
\end{aligned}
\end{equation*}
so that
\begin{equation}\label{eqn:decomposition}
\left\<L_t^N,\varphi\right\>-\left\<L_s^N,\varphi\right\>
=M_{t}^{N}-M_{s}^{N}+\int_{s}^{t}\left\<L_v^N, \nabla\varphi\right\>
\phi\left(\int_0^{v-} 
h(v-u)d \overline{Z}^{N}_u\right) dv.
\end{equation}

First, let us prove that
\begin{equation}\label{M:inequality}
\limsup_{\delta\downarrow 0}\limsup_{N\to \infty}\frac{1}{N}
\sup_{0\leq s\leq T-\delta}\log\mathbb P\left(\sup_{0\leq t\leq \delta}\left| M_{s+t}^N-M_s^N\right|>\varepsilon\right)
=-\infty.
\end{equation}
By Chebychev's inequality and  Doob's  inequality,  for any $\lambda>0$,
\begin{equation}
\mathbb P\left(\sup_{0\leq t\leq \delta}\left| M_{s+t}^N-M_s^N\right| \geq \varepsilon\right)
\leq 2e^{-\varepsilon\lambda N}\mathbb E \left[e^{\lambda N (M_{s+\delta}^N-M_s^N)}\right].
\end{equation}
On the other hand,
\begin{align*}
M_{s+\delta}^N-M_s^N
&=\left\<L_{s+\delta}^N,\varphi\right\>-\left\<L_s^N,\varphi\right\>
-\int_{s}^{s+\delta}\left\<L_v^N, \nabla\varphi\right\>
\phi\left(\int_0^{v-} 
h(v-u)d \overline{Z}^{N}_u\right) dv
\\
&\leq
\Vert\varphi\Vert_{lip}\left(\overline{Z}_{s+\delta}^{N}-\overline{Z}_{s}^{N}\right)
+\Vert\varphi\Vert_{lip}\int_{s}^{s+\delta}\phi\left(\int_0^{v-} 
h(v-u)d \overline{Z}^{N}_u\right) dv
\\
&\leq
\Vert\varphi\Vert_{lip}\left(\overline{Z}_{s+\delta}^{N}-\overline{Z}_{s}^{N}\right)
+\Vert\varphi\Vert_{lip}\delta\left(\phi(0)+\alpha\Vert h\Vert_{L^{\infty}[0,T]}\overline{Z}^{N}_{T}\right).
\end{align*}
Therefore, we conclude that
\begin{align*}
&\mathbb P\left(\sup_{0\leq t\leq \delta}\left| M_{s+t}^N-M_s^N\right| \geq \varepsilon\right)\\
&\leq 
2e^{-\varepsilon\lambda N}
\mathbb{E}\left[e^{\lambda N\Vert\varphi\Vert_{lip}\left(\overline{Z}_{s+\delta}^{N}-\overline{Z}_{s}^{N}\right)
+\lambda N\Vert\varphi\Vert_{lip}\delta\left(\phi(0)+\alpha\Vert h\Vert_{L^{\infty}[0,T]}\overline{Z}^{N}_{T}\right)}\right]
\\
&=2e^{-\varepsilon\lambda N}e^{\lambda N\Vert\varphi\Vert_{lip}\delta\phi(0)}\mathbb{E}\left[e^{\lambda N\Vert\varphi\Vert_{lip}\left(\overline{Z}_{s+\delta}^{N}-\overline{Z}_{s}^{N}\right)
+\lambda N\Vert\varphi\Vert_{lip}\delta\alpha\Vert h\Vert_{L^{\infty}[0,T]}\overline{Z}^{N}_{T}}\right]
\\
&\leq
2e^{-\varepsilon\lambda N}e^{\lambda N\Vert\varphi\Vert_{lip}\delta\phi(0)}\left(\mathbb{E}\left[e^{2\lambda N\Vert\varphi\Vert_{lip}\left(\overline{Z}_{s+\delta}^{N}-\overline{Z}_{s}^{N}\right)}\right]\right)^{1/2}
\left(\mathbb{E}\left[e^{\lambda N\Vert\varphi\Vert_{lip}\delta\alpha\Vert h\Vert_{L^{\infty}[0,T]}\overline{Z}^{N}_{T}}\right]\right)^{1/2},
\end{align*}
where we applied Cauchy-Schwarz inequality.
By applying \eqref{key:bound:overline:Z}, we have for any $\lambda>0$,
\begin{equation}
\limsup_{\delta\to 0}\limsup_{N\to\infty}\frac{1}{N}\log\mathbb{E}\left[e^{\lambda N\Vert\varphi\Vert_{lip}\delta\alpha\Vert h\Vert_{L^{\infty}[0,T]}\overline{Z}^{N}_{T}}\right]
=0.
\end{equation}
If one can show that for any $\lambda>0$,
\begin{equation}\label{to:show}
\limsup_{\delta\to 0}\limsup_{N\to\infty}\frac{1}{N}\log\sup_{0\leq s\leq T-\delta}\mathbb{E}\left[e^{2\lambda N\Vert\varphi\Vert_{lip}\left(\overline{Z}_{s+\delta}^{N}-\overline{Z}_{s}^{N}\right)}\right]=0,
\end{equation}
then, we have
 $$
\limsup_{\delta\downarrow 0}\limsup_{N\to \infty}\frac{1}{N}
\log\sup_{0\leq s\leq T-\delta}\mathbb P\left(\sup_{0\leq t\leq \delta}\left| M_{s+t}^N-M_s^N\right|>\varepsilon\right)
\leq -\lambda\varepsilon.
$$
Since it holds for every $\lambda>0$, we conclude that
\eqref{M:inequality} holds.

Next, let us prove \eqref{to:show}. 
Note that to show \eqref{to:show}, it suffices to show that
for any $q>0$,
\begin{equation}\label{to:show:q}
\limsup_{\delta\to 0}\limsup_{N\to\infty}\frac{1}{N}\log\sup_{0\leq s\leq T-\delta}\mathbb E \left[e^{Nq\left(\overline{Z}_{s+\delta}^N-\overline{Z}_s^N\right)}\right]=0.
\end{equation}
It is easy to see that $N\overline{Z}_{t}^{N}=\sum_{i=1}^{N}Z_{t}^{N,i}$
can be viewed a one-dimensional Hawkes process with 
the intensity (see e.g. \cite{GaoFZhu-b})
\begin{equation*}
\lambda_{t}^{N}:=N\phi\left(\frac{1}{N}\int_{0}^{t-}h(t-s)d\left(N\overline{Z}_{s}^{N}\right)\right).
\end{equation*}
Thus, $e^{\theta N\overline{Z}_{t}^{N}-\int_{0}^{t}(e^{\theta}-1)\lambda_{s}^{N}ds}$ is a positive local martingale
and hence a super martingale for any $\theta\in\mathbb{R}$.  
Therefore, for any $q>0$, $\delta<T$ and $0\leq s\leq T-\delta$,
\begin{align*}
\mathbb E \left[e^{q\left(N\overline{Z}_{s+\delta}^N-N\overline{Z}_s^N\right)}\right]
&=\mathbb E \left[e^{q\left(N\overline{Z}_{s+\delta}^N-N\overline{Z}_s^N\right)-\frac{1}{2}\int_{s}^{s+\delta}(e^{2q}-1)\lambda_{u}^{N}du}
e^{\frac{1}{2}\int_{s}^{s+\delta}(e^{2q}-1)\lambda_{u}^{N}du}\right]
\\
&\leq\left(\mathbb E \left[e^{2q\left(N\overline{Z}_{s+\delta}^N-N\overline{Z}_s^N\right)-\int_{s}^{s+\delta}(e^{2q}-1)\lambda_{u}^{N}du}\right]
\mathbb{E}\left[e^{\int_{s}^{s+\delta}(e^{2q}-1)\lambda_{u}^{N}du}\right]\right)^{1/2}
\\
&\leq\left(\mathbb{E}\left[e^{\int_{s}^{s+\delta}(e^{2q}-1)\lambda_{u}^{N}du}\right]\right)^{1/2}
\\
&\leq\left(\mathbb{E}\left[e^{\int_{s}^{s+\delta}(e^{2q}-1)(N\phi(0)+\alpha\int_{0}^{u-}h(u-v)d(N\overline{Z}_{v}^{N})du}\right]\right)^{1/2}
\\
&\leq\left(\mathbb{E}\left[e^{\int_{s}^{s+\delta}(e^{2q}-1)(N\phi(0)+\alpha\Vert h\Vert_{L^{\infty}[0,u]}N\overline{Z}_{u}^{N})du}\right]\right)^{1/2}
\\
&\leq\left(\mathbb{E}\left[e^{\delta(e^{2q}-1)(N\phi(0)+\alpha\Vert h\Vert_{L^{\infty}[0,T]}N\overline{Z}_{T}^{N})}\right]\right)^{1/2},
\end{align*}
where we applied Cauchy-Schwarz inequality to obtain the first inequality above.
Finally, by applying \eqref{key:bound:overline:Z}, we obtain \eqref{to:show:q}. Hence, we proved \eqref{to:show} and thus \eqref{M:inequality}.
 
Next, we will show that
\begin{equation}\label{L:inequality}
\limsup_{\delta\downarrow 0}\limsup_{N\to \infty}\frac{1}{N}
\log\sup_{0\leq s\leq T-\delta}\mathbb P\left(\sup_{0\leq t\leq \delta}\left| \int_{s}^{s+t}\left\<L_v^N, \nabla\varphi\right\>   \phi\left(\int_0^{v-} 
h(v-u)d \overline{Z}^{N}_u\right)  dv\right|>\varepsilon\right)
=-\infty.
\end{equation}
To show \eqref{L:inequality}, we can compute
that for any $\delta<T$ and $0\leq s\leq T-\delta$,
\begin{align*}
&\mathbb P\left(\sup_{0\leq t\leq \delta}\left| \int_{s}^{s+t}\left\<L_v^N, \nabla\varphi\right\>   \phi\left(\int_0^{v-} 
h(v-u)d \overline{Z}^{N}_u\right)  dv\right|>\varepsilon\right)
\\
&=
\mathbb P\left(\sup_{0\leq t\leq \delta}\left| \int_{s}^{s+t}\sum_{i=1}^{N}\nabla\varphi\left(Z_{v}^{N,i}\right)\phi\left(\int_0^{v-} 
h(v-u)d \overline{Z}^{N}_u\right)  dv\right|>N\varepsilon\right)
\\
&\leq
\mathbb P\left(\int_s^{s+\delta}N\Vert\varphi\Vert_{lip}\phi\left(\int_0^{v-} 
h(v-u)d \overline{Z}^{N}_u\right)  dv>N\varepsilon\right)
\\
&\leq
\mathbb P\left(N\Vert\varphi\Vert_{lip}\int_{s}^{s+\delta}\left(\phi(0)+\alpha\int_0^{v-} 
h(v-u)d \overline{Z}^{N}_u\right) dv>N\varepsilon\right)
\\
&\leq
\mathbb P\left(N\Vert\varphi\Vert_{lip}\int_{s}^{s+\delta}\alpha\Vert h\Vert_{L^{\infty}[0,v]}\overline{Z}^{N}_{v}dv>N\left(\varepsilon-\Vert\varphi\Vert_{lip}\phi(0)\delta\right)\right)
\\
&\leq
\mathbb P\left(N\overline{Z}^{N}_{T}>\frac{N\left(\varepsilon-\Vert\varphi\Vert_{lip}\phi(0)\delta\right)}{\delta\Vert\varphi\Vert_{lip}\alpha\Vert h\Vert_{L^{\infty}[0,T]}}\right).
\end{align*}
Finally, by applying Chebychev's inequality and \eqref{key:bound:overline:Z}, we obtain \eqref{L:inequality}.

Hence, by applying \eqref{eqn:decomposition}, \eqref{M:inequality} and \eqref{L:inequality}, 
we conclude that \eqref{Ldp-up-thm-2-eq-2-1} holds and thus \eqref{Ldp-up-thm-2-eq-2} also holds. The proof is complete.
\end{proof}

\subsection{The lower bound}\label{sec:lower:bound}

\begin{thm} \label{Ldp-lb-thm-1}
Suppose (A.1), (A.2)  and   (A.3) hold. Then for any   $\mu\in M[0,T]$ and open set  $O\ni \mu$, 
\begin{equation} \label{Ldp-lb-thm-1-eq-1}
\liminf_{N\to\infty}\frac{1}{N}\log \mathbb P \left(L^N\in O\right)
\geq-I(\mu).
\end{equation}
\end{thm}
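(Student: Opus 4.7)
Assume $I(\mu)<\infty$, otherwise the bound is trivial. The plan is the classical change-of-measure recipe for large-deviation lower bounds, adapted to the Hawkes setting by combining the exponential martingale $\mathcal E^{N,\varphi}$ constructed in the proof of Theorem~\ref{Ldp-up-thm-1} with the perturbed law of large numbers of Theorem~\ref{perturbation-mean-fields-LLN-thm-1} and the approximation result of Theorem~\ref{ldp-rate-funtion-Property-4}.

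By Theorem~\ref{ldp-rate-funtion-Property-4}, choose a sequence of bounded functions $\varphi^{(n)}$ such that the corresponding solutions $\mu^{(n)}:=\mu^{\varphi^{(n)}}\in M[0,T]$ of \eqref{perturbation-mean-fields-equation} satisfy $\mu^{(n)}\to\mu$ in $M[0,T]$ and $I(\mu^{(n)})\to I(\mu)$. Since $O$ is open and $\mu\in O$, for $n$ large we have $\mu^{(n)}\in O$, and it suffices to prove, for each such $n$, that
$$\liminf_{N\to\infty}\frac{1}{N}\log\mathbb{P}(L^N\in O)\geq -I(\mu^{(n)}),$$
and then send $n\to\infty$. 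Fix $\varphi=\varphi^{(n)}$ and let $\mathbb Q^N$ be the probability measure defined by $d\mathbb Q^N/d\mathbb P=\mathcal E_T^{N,\varphi}$. By Girsanov's theorem for Poisson-driven SDEs, under $\mathbb Q^N$ the canonical $N$-particle process has the law of the perturbed system \eqref{N-dim-perturbation-Hawkes-process-eq}, so Theorem~\ref{perturbation-mean-fields-LLN-thm-1} yields $L^N\to\mu^{(n)}$ in $\mathbb Q^N$-probability. Pick an open neighborhood $O_n\ni\mu^{(n)}$ with $\overline{O_n}\subset O$ and a level set $B_N$ on which $\tfrac{1}{N}\log\mathcal E_T^{N,\varphi}$ is controlled and $\mathbb Q^N(B_N)\to 1$; set $A_N:=\{L^N\in O_n\}\cap B_N$. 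Then $\mathbb Q^N(A_N)\to 1$ and
$$\mathbb P(L^N\in O)\;\geq\;\mathbb E^{\mathbb Q^N}\!\!\left[(\mathcal E_T^{N,\varphi})^{-1}\mathbf 1_{A_N}\right],$$
and Jensen's inequality applied to $-\log$ on the normalized measure $\mathbf 1_{A_N}\,d\mathbb Q^N/\mathbb Q^N(A_N)$ gives
$$\frac{1}{N}\log\mathbb P(L^N\in O)\;\geq\;-\frac{1}{\mathbb Q^N(A_N)}\mathbb E^{\mathbb Q^N}\!\!\left[\tfrac{1}{N}\log\mathcal E_T^{N,\varphi}\cdot\mathbf 1_{A_N}\right]+\frac{1}{N}\log\mathbb Q^N(A_N).$$

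The explicit formula for $\log\mathcal E_T^{N,\varphi}$ from the proof of Theorem~\ref{Ldp-up-thm-1} displays it as a continuous functional of $(L^N,\overline Z^N)$, so the $\mathbb Q^N$-LLN for $L^N$, together with the characterization \eqref{general-perturbation-mean-fields-equation-sol} of $\varphi^{(n)}$ by $\mu^{(n)}$ and the representation of $I$ in Theorem~\ref{ldp-rate-funtion-Property-1}, identifies the limit as $\tfrac{1}{N}\log\mathcal E_T^{N,\varphi}\to I(\mu^{(n)})$ in $\mathbb Q^N$-probability. Passing to the limit in $N$ using uniform integrability on $A_N$ and then in $n$ via $I(\mu^{(n)})\to I(\mu)$ delivers the desired bound. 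The principal obstacle is precisely this uniform integrability: the in-probability convergence must be upgraded to control of $\mathbb E^{\mathbb Q^N}[\tfrac{1}{N}\log\mathcal E_T^{N,\varphi}\cdot\mathbf 1_{A_N}]$, which requires exponential-moment bounds for $\overline Z^{\varphi,N}$ under $\mathbb Q^N$ analogous to \eqref{key:bound:overline:Z}. Boundedness of $\varphi^{(n)}$, combined with assumption (A.2) and a Gronwall argument in the spirit of Theorem~\ref{N-dim-perturbation-Hawkes-process-exp-Thm-1}, delivers these; this is exactly the reason for first approximating through the bounded $\varphi^{(n)}$ rather than working directly with the possibly unbounded $\varphi_\mu$.
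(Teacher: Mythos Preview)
Your overall strategy---approximate by bounded $\varphi^{(n)}$ via Theorem~\ref{ldp-rate-funtion-Property-4}, change measure to the perturbed system, invoke the LLN of Theorem~\ref{perturbation-mean-fields-LLN-thm-1}, and identify the entropy cost---is exactly the route the paper takes. But there is a concrete gap in the identification of the tilted measure.

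The exponential martingale $\mathcal E^{N,\varphi}$ in the proof of Theorem~\ref{Ldp-up-thm-1} is the Dol\'eans--Dade exponential of $\sum_i\int \nabla\varphi(s,Z^{N,i}_{s-})\,(\pi^i(ds\,dz)-ds\,dz)$; Girsanov therefore tilts each particle's intensity by $e^{\nabla\varphi}$, not by $e^{\varphi}$. The perturbed system \eqref{N-dim-perturbation-Hawkes-process-eq}, however, has intensity multiplied by $e^{\varphi}$. So your claim that under $\mathbb Q^N=\mathcal E_T^{N,\varphi}\cdot\mathbb P$ one recovers the law of \eqref{N-dim-perturbation-Hawkes-process-eq} is false as written, and consequently the asserted limit $\tfrac{1}{N}\log\mathcal E_T^{N,\varphi}\to I(\mu^{(n)})$ is also off: that martingale encodes $J_\mu(\varphi)$ (with $e^{\nabla\varphi}-1$), not the expression $\int(\varphi e^{\varphi}-e^{\varphi}+1)\phi\,d\mu\,dt$ in \eqref{ldp-rate-funtion-representation-eq-1}. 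The paper avoids this by writing down directly the Radon--Nikodym derivative $d\mathbb P^{\varphi,N}/d\mathbb P^{0,N}$ for the tilt $e^{\varphi}$ and then inverting; this is the object you should work with. (Equivalently, you could insert into $\mathcal E^{N,\cdot}$ a function $\psi$ with $\nabla\psi=\varphi^{(n)}$, but such a $\psi$ need not be bounded, so the direct approach is cleaner.)

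A second point: you overstate the uniform integrability difficulty. Once $\varphi=\varphi^{(n)}$ is bounded, the paper decomposes $\tfrac{1}{N}\log(d\mathbb P^{0,N}/d\mathbb P^{\varphi,N})$ into a martingale $M^N_T$ with $\langle M^N\rangle_T=O(1/N)$ (hence $M^N_T\to 0$ in probability) plus a drift $\int_0^T\langle L^{\varphi,N}_t,\,\varphi e^{\varphi}-e^{\varphi}+1\rangle\phi(\ldots)\,dt$. The drift is bounded by $C(1+\overline Z^{\varphi,N}_T)$, and the uniform $L^2$ bound on $\overline Z^{\varphi,N}_T$ from Lemma~\ref{N-dim-perturbation-Hawkes-process-bounded-Thm} already yields uniform integrability; no exponential moments of the type \eqref{key:bound:overline:Z} are needed for the lower bound.
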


\begin{proof}
If  $I(\mu)=+\infty$,   then (\ref{Ldp-lb-thm-1-eq-1})  is obvious. Next, let us assume $I(\mu)<\infty$.  
By Theorem \ref{ldp-rate-funtion-Property-2}, we can assume that there exists a bounded  $\varphi \in B([0,T]\times\mathbb N)$ such that that $\mu=\mu^\varphi$ defined by the equation \eqref{perturbation-mean-fields-equation},  and $\mu$ satisfies   
$$
\inf_{t\in [0,T]}\inf _{x=0,\cdots, m}\mu_t(\{x\})>0,
$$
and
$$
\mu_t(\{x\})=0 \mbox{ and }   \varphi(t,x)=0 \mbox{ for any } x\geq m+1. 
$$ 
 
Let  $\left\{Z^{\varphi, N,i}_t,t\in[0,T]\right\}$ be the unique solution of the SDEs \eqref{N-dim-perturbation-Hawkes-process-eq}.  Let $\left\{\tilde Z_t^\varphi, t\in[0,T]\right\}$ be the unique solution of the equation \eqref{Hawkes-perturbation-mean-eq-1}  and let $\mathcal L_t^{\varphi}(dx)$ denote the distribution of $\left\{\tilde Z_t^\varphi, t\in[0,T]\right\}$.  We recall from \eqref{N-dim-perturbation-mean-fields-eq} that
$$
L^{\varphi, N}_t=\frac{1}{N}\sum_{i=1}^N\delta_{Z^{\varphi, N,i}_t}.
$$
Then by Theorem~\ref{perturbation-mean-fields-LLN-thm-1},   $\mu=\mathcal L^{\varphi}$,  and for any   an open subset $O\ni \left(\mathcal L_t^{\varphi}\right)_{t\in[0,T]} $,
\begin{equation} \label{ldp-lb-thm-1-eq-3}
\lim_{N\to\infty}\mathbb P^{\varphi,N} \left(L^{\varphi, N}\in O\right)=1,
\end{equation}
where $\mathbb P^{\varphi,N}$ denote the probability law of $\left(Z^{\varphi,N,1}_t,\cdots,Z^{\varphi,N,N}_t\right)_{t\in [0,T]}$.
By definition, $\mathbb P^{0,N}=\mathbb{P}$.

Moreover, let $Z_t^N:=\left(Z^{N,1}_t,\cdots,Z^{N,N}_t\right)_{t\in [0,T]}$ denote the coordinate process. Then
\begin{equation*} \label{N-dim-perturbation-Hawkes-process-NR-eq}
\begin{aligned}
\log\frac{d\mathbb P^{\varphi,N}}{d\mathbb P^{0,N}}
&=\sum_{i=1}^N\int_0^T \int_0^\infty \left(- \varphi\left(s,Z^{N,i}_{s-}\right)\right)I_{\left\{z \leq e^{\varphi(s,Z^{N,i}_{s-})}\phi \left( \int_0^{s-}h(s-u)d\overline{Z}_u^{N}\right)\right\}}
(\pi^i(ds\,dz)-dsdz) \\
&\quad +\sum_{i=1}^N  \int_0^T   \bigg(e^{-\varphi(s,Z^{N,i}_{s-})}-1+\varphi\left(s,Z^{N,i}_{s-}\right)\bigg)e^{\varphi\left(s,Z^{N,i}_{s-}\right)}  \phi \left( \int_0^{s-}h(s-u)d\overline{Z}_u^{N}\right)ds \\
&=N\left(-M_T^{N}+\int_0^T\left\<L_N^\varphi(t),-e^{\varphi(t)}+1+\varphi(t) e^{\varphi(t)}\right\>dt\right),
\end{aligned}
\end{equation*}
where
$$
M_t^{N}:=\frac{1}{N}\sum_{i=1}^N\int_0^t \int_0^\infty  \varphi\left(s,Z^{N,i}_{s-}\right)I_{\left\{z \leq e^{\varphi(s,Z^{N,i}_{s-})}\phi \left( \int_0^{s-}h(s-u)d\overline{Z}_u^{N}\right)\right\}}
(\pi^i(ds\,dz)-dsdz).
$$
Since
\begin{equation*}
\left\<M^{N}\right\>_T=\frac{1}{N^2}\sum_{i=1}^N\int_0^T  \bigg(\varphi\left(s,Z^{N,i}_{s-}\right)\bigg)^2 e^{\varphi\left(s,Z^{N,i}_{s-}\right)}\phi \left( \int_0^{s-}h(s-u)d\overline{Z}_u^{N}\right) ds,
\end{equation*}
we have
\begin{equation*}
\sup_{t\in[0,T]}\left|M_t^{N}\right|\to 0\qquad\text{in probability}. 
\end{equation*}

Now, by
$$
\begin{aligned}
&~~\frac{1}{N}\log \mathbb P\left(L^N\in O\right)\\
&=\frac{1}{N}\log \frac{1}{\mathbb P^{\varphi,N }(L^{\varphi, N}\in O)}
\mathbb E^{\varphi, N}\left(I_{\{L^N\in O\}} \frac{d\mathbb P^{0, N}}{d\mathbb P^{\varphi, N}}\right)
+\frac{1}{N}\log \mathbb P^{\varphi, N}\left(L^{\varphi, N}\in O\right)\\
&{\geq} \frac{1}{N} 
\mathbb E^{\varphi, N}\left(I_{\{L^N\in O\}} \log\frac{d\mathbb P^{0, N}}{d\mathbb P^{\varphi, N}}\right)
+\frac{1}{N}\log \mathbb P^{\varphi, N}\left(L^{\varphi, N}\in O\right),
\end{aligned}
$$
where $\mathbb E^{\varphi,N}$ is the associated expectation of $\mathbb P^{\varphi,N}$,
we obtain
 $$
\begin{aligned}
&\liminf_{N\to\infty}\frac{1}{N}\log \mathbb P\left(L^N\in O\right)\\
\geq&-\limsup_{N\to\infty}\mathbb E^{\varphi, N}\left(I_{\{L^{\varphi, N}\in O\}}\int_0^T\left\langle L_N^\varphi(t),-e^{\varphi(t)}+1+\varphi(t) e^{\varphi(t)}\right\rangle dt\right)\\
=&-I(\mu).
\end{aligned}
$$
This completes the proof.
\end{proof}

Thus, we have the following result.

 \begin{thm} \label{Ldp-lb-thm-2}
Suppose (A.1), (A.2)  and   (A.3) hold. Then for any   open set  $O$ in $ M[0,T]$, 
\begin{equation} \label{Ldp-lb-thm-2-eq-1}
\liminf_{N\to\infty}\frac{1}{N}\log \mathbb P \left(L^N\in O\right)
\geq-\inf_{\mu\in O}I(\mu).
\end{equation}
\end{thm}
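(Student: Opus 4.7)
The plan is to deduce Theorem~\ref{Ldp-lb-thm-2} as an immediate consequence of the pointwise lower bound Theorem~\ref{Ldp-lb-thm-1}. The heavy lifting has already been done: Theorem~\ref{Ldp-lb-thm-1} states that for \emph{any} $\mu \in M[0,T]$ and \emph{any} open set $O$ containing $\mu$, one has $\liminf_{N\to\infty}\frac{1}{N}\log\mathbb{P}(L^N\in O) \geq -I(\mu)$. The quantity on the left is independent of the choice of $\mu$, so it dominates the supremum of $-I(\mu)$ over all $\mu\in O$.

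Concretely, I would argue as follows. If $O$ is empty, both sides equal $-\infty$ under the convention $\inf_\emptyset I = +\infty$, so the inequality is trivial. Otherwise, fix an arbitrary $\mu_0 \in O$. Since $O$ is open and contains $\mu_0$, Theorem~\ref{Ldp-lb-thm-1} applies with this choice and yields
\begin{equation*}
\liminf_{N\to\infty}\frac{1}{N}\log \mathbb P(L^N \in O) \geq -I(\mu_0).
\end{equation*}
As $\mu_0 \in O$ was arbitrary, taking the supremum over $\mu_0 \in O$ on the right, and using $\sup_{\mu_0\in O}\bigl(-I(\mu_0)\bigr) = -\inf_{\mu_0\in O} I(\mu_0)$, gives \eqref{Ldp-lb-thm-2-eq-1}.

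There is no substantive obstacle in this step; all difficulties were absorbed into Theorem~\ref{Ldp-lb-thm-1}, which in turn relied on the change-of-measure / tilted-intensity construction through bounded perturbations $\varphi$, the law of large numbers for the perturbed mean field (Theorem~\ref{perturbation-mean-fields-LLN-thm-1}), and the approximation of an arbitrary finite-rate $\mu$ by tractable perturbed mean fields (Theorem~\ref{ldp-rate-funtion-Property-4}). The present result is purely the standard passage from a pointwise to an infimum form of the large deviation lower bound.
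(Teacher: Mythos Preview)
Your proposal is correct and matches the paper's approach exactly: the paper presents Theorem~\ref{Ldp-lb-thm-2} with the line ``Thus, we have the following result'' immediately after Theorem~\ref{Ldp-lb-thm-1} and gives no separate proof, indicating precisely the trivial passage from the pointwise bound to the infimum bound that you wrote out.
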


\section{Further Discussions}\label{sec:discussions}

In this section, we provide further discussions. In particular, 
we will show that our result recovers the large deviation principle
for the mean process of the Hawkes processes obtained in \cite{GaoFZhu-b}.
We first start with the following observation.

\begin{rmk}
Since the mean-field limit $\mu$ is the law of an inhomogeneous Poisson process
with compensator $m(t)$ and intensity $\phi\left(\int_{0}^{t}h(t-s)dm(s)\right)$ (see \cite{Delattre}), 
for any $x\in\mathbb{N}$,
\begin{equation*}
F_{\mu}(t,x)=\sum_{k=0}^{x}\frac{e^{-m(t)}}{k!}(m(t))^{k}.
\end{equation*}
By differentiating with respect to $t$ in the above equation, we get
\begin{equation*}
\partial_{t}F_{\mu}(t,x)=-m'(t)F_{\mu}(t,x)+m'(t)F_{\mu}(t,x-1)
=-f_{\mu}(t,x),
\end{equation*}
which implies from \eqref{general-perturbation-mean-fields-equation-sol} that $\varphi_{\mu}\equiv 0$ 
and by \eqref{ldp-rate-funtion-representation-eq-1}, we have $I(\mu)=0$.
\end{rmk}

As a consequence of our main result Theorem~\ref{main-LDP-thm-1}, we obtain the following corollary.

\begin{cor}\label{mean-proce-LDPThm}
$\mathbb{P}\left(\overline{Z}_{t}^{N}\in\cdot\right)$ satisfies a large deviation
principle on $D[0,T]$ equipped with Skorokhod topology
with the speed $N$ and the rate function
\begin{equation}\label{IEqn}
I(\eta):=\int_{0}^{T}\ell\left(\eta'(t);\phi\left(\int_{0}^{t}h(t-s)d\eta(s)\right)\right)dt,
\end{equation}
if $\eta\in\mathcal{AC}_{0}^{+}[0,T]$ and $+\infty$ otherwise, where
\begin{equation}\label{ellEqn}
\ell(x;y):=x\log\left(\frac{x}{y}\right)-x+y,
\end{equation}
and $\mathcal{AC}_{0}^{+}[0,T]$ is the space of non-decreasing functions 
$f:[0,T]\rightarrow\mathbb{R}$ that are absolutely continuous
with $f(0)=0$.
\end{cor}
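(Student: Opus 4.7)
The plan is to derive Corollary \ref{mean-proce-LDPThm} from Theorem \ref{main-LDP-thm-1} via the contraction principle. Define the map $\Pi: M[0,T] \to D[0,T]$ by $\Pi(\mu)(t) = \bar{\mu}_t$. Since $W_1(\nu,\mu) \geq |\bar{\nu} - \bar{\mu}|$ (take $\varphi(x)=x$ in the Monge--Kantorovich dual representation), uniform $W_1$-convergence implies uniform convergence of the first moments, hence convergence in the Skorokhod topology on $D[0,T]$. Thus $\Pi$ is continuous, $\overline{Z}^N = \Pi(L^N)$, and the contraction principle yields the LDP for $\overline{Z}^N$ at speed $N$ with rate function
\begin{equation*}
\tilde{I}(\eta) = \inf\bigl\{I(\mu) : \mu \in M[0,T], \ \bar{\mu} = \eta\bigr\}.
\end{equation*}
It remains to show $\tilde I$ matches the expression in \eqref{IEqn}.

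For the \emph{lower bound}, fix $\eta \in \mathcal{AC}_0^+[0,T]$ and let $\mu \in M[0,T]$ satisfy $\bar{\mu}=\eta$ and $I(\mu)<\infty$. Write $\lambda(t):=\phi(\int_0^t h(t-s)\,d\eta(s))$. Identity \eqref{ldp-rate-funtion-representation-eq-0} together with differentiation in $t$ gives the key constraint
\begin{equation*}
\eta'(t) = \int_0^\infty G_\mu(t,x)\lambda(t)\,\mu_t(dx), \qquad \text{a.e.\ } t \in [0,T].
\end{equation*}
Applying Jensen's inequality to the convex map $u\mapsto u\log u - u + 1$ with respect to the probability measure $\mu_t$ then yields, after multiplying by $\lambda(t)$ and integrating in $t$,
\begin{equation*}
I(\mu) \;\geq\; \int_0^T \ell\bigl(\eta'(t); \lambda(t)\bigr)\,dt,
\end{equation*}
where $\ell$ is defined in \eqref{ellEqn}. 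This gives the desired lower bound on $\tilde I(\eta)$.

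For the \emph{matching upper bound}, take $\mu^*_t$ to be the Poisson distribution with mean $\eta(t)$. Since $t\mapsto\eta(t)$ is nondecreasing, the family $\mu^*_t$ is stochastically increasing, hence $F_{\mu^*}(t,x)$ is decreasing in $t$, and $\mu^*_0 = \delta_0$; thus $\mu^* \in M[0,T]$ and $\bar{\mu}^* = \eta$. A direct differentiation of the Poisson CDF yields $\partial_t F_{\mu^*}(t,x) = -\eta'(t) f_{\mu^*}(t,x)$, so $G_{\mu^*}(t,x) = \eta'(t)/\lambda(t)$ is constant in $x$, realizing equality in Jensen. Plugging into \eqref{ldp-rate-funtion-representation-eq-3} produces exactly $I(\mu^*) = \int_0^T \ell(\eta'(t); \lambda(t))\,dt$, so $\tilde I(\eta)$ equals this value. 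Finally, if $\eta\notin\mathcal{AC}_0^+[0,T]$, then no admissible $\mu$ exists: $\bar{\mu}_0 = 0$ (since $\mu_0(\{0\})=1$), $\bar{\mu}$ is nondecreasing (since $F_\mu(t,x)$ decreases in $t$), and when $I(\mu)<\infty$ the representation \eqref{ldp-rate-funtion-representation-eq-0} forces $\bar{\mu}$ to be absolutely continuous; hence $\tilde I(\eta) = +\infty$, matching the convention in \eqref{IEqn}.

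The \textbf{main obstacle} is the infinite-dimensional minimization defining $\tilde I$: the Jensen argument collapses the spatial degree of freedom pointwise in $t$, but one must identify an actual minimizer in $M[0,T]$ satisfying all the constraints of that space (the decreasing monotonicity of $F_\mu$ in $t$ and the initial condition $\mu_0 = \delta_0$). The fortunate fact is that the Poisson-in-time family does belong to $M[0,T]$ and saturates Jensen's inequality simultaneously at every $t$. A minor technical point is justifying the interchange of differentiation and the sum-over-$x$ in the identity for $\eta'$; this is controlled by the absolute continuity granted by Lemma \ref{ldp-rate-funtion-Property-1-lem-1}.
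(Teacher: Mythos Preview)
Your proof is correct and follows essentially the same route as the paper: contraction principle applied to the first-moment map, followed by Jensen's inequality to obtain the lower bound on the contracted rate function, and identification of the minimizer via $\varphi_\mu$ constant in $x$. The one substantive addition you make is the explicit construction of the minimizing measure $\mu^*_t = \mathrm{Poisson}(\eta(t))$ and the verification that it lies in $M[0,T]$; the paper simply asserts that the minimum is attained at $\varphi_\mu(t,x)=\log(\eta'(t)/\lambda(t))$ without exhibiting the corresponding $\mu$. Your treatment of the degenerate case $\eta\notin\mathcal{AC}_0^+[0,T]$ is also more explicit than the paper's. These are refinements rather than a different approach.
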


\begin{proof}
Note that the map $\mu_{t}\mapsto\int_{0}^{\infty}x\mu_{t}(dx)$ is continuous. 
By the contraction principle (see e.g. \cite{Dembo}) and our Theorem~\ref{main-LDP-thm-1}, 
$\mathbb{P}\left(\overline{Z}_{t}^{N}\in\cdot\right)$ satisfies a large deviation
principle on $D[0,T]$ equipped with Skorokhod topology
with the speed $N$ and the rate function
\begin{align}\label{minimum}
I(\eta)&=\inf_{\mu:\int_{0}^{\infty}x\mu_{t}(dx)=\eta(t)}
\int_0^T\int_{0}^\infty \left(\varphi_\mu(t,x)e^{\varphi_\mu(t,x)}-e^{\varphi_\mu(t,x)}+1\right)
\\
&\quad\quad\quad\cdot\phi\left(\int_0^{t} h(t-s)d\bar{\mu}_s\right)\mu_t(dx)dt.
\nonumber
\end{align}
Notice that, for every $T$,
\begin{equation}\label{diffT}
\int_0^T\int_{0}^\infty e^{\varphi_\mu(t,x)}\phi\left(\int_0^{t} h(t-s)d\eta(s)\right)\mu_t(dx)dt
=\eta(T),
\end{equation}
and
\begin{equation}
\int_0^T\int_{0}^\infty \phi\left(\int_0^{t} h(t-s)d\bar{\mu}_s\right)\mu_t(dx)dt
=\int_0^T \phi\left(\int_0^{t} h(t-s)d\eta(s)\right)dt.
\end{equation}
Moreover, since the map $x\mapsto x\log x$ is convex, 
by Jensen's inequality
\begin{equation}
\int_{0}^{\infty}\varphi_{\mu}(t,x)e^{\varphi_{\mu}(t,x)}\mu_{t}(dx)
\geq
\int_{0}^{\infty}e^{\varphi_{\mu}(t,x)}\mu_{t}(dx)\log\int_{0}^{\infty}e^{\varphi_{\mu}(t,x)}\mu_{t}(dx).
\end{equation}
Finally, notice that by differentiating \eqref{diffT} w.r.t. $T$ and set $T=t$, we get
\begin{equation}
\eta'(t)=\phi\left(\int_0^{t} h(t-s)d\eta(s)\right)\int_{0}^\infty e^{\varphi_\mu(t,x)}\mu_t(dx).
\end{equation}
The minimum in \eqref{minimum} is achieved at
\begin{equation}
\varphi_\mu(t,x)=\log\left(\frac{\eta'(t)}{\phi\left(\int_0^{t} h(t-s)d\eta(s)\right)}\right).
\end{equation}
The conclusion follows.
\end{proof}

\begin{rmk}
Suppose (A.1), (A.2) and (A.3) hold.
Then, it is proved in \cite{GaoFZhu-b} that
$\mathbb{P}\left(\overline{Z}_{t}^{N}\in\cdot\right)$ satisfies a large deviation
principle on $D[0,T]$ equipped with Skorokhod topology
with the speed $N$ and the rate function defined in \eqref{IEqn}.
Hence, Corollary~\ref{IEqn} recovers the large deviations result in \cite{GaoFZhu-b}.
\end{rmk}

\section*{Acknowledgements}
Fuqing Gao acknowledges support from NSFC Grants 11971361 and 11731012.
Lingjiong Zhu is grateful to the support from NSF Grants DMS-1613164, DMS-2053454, 
DMS-2208303, and a Simons Foundation Collaboration Grant.


\end{document}